\documentclass[12pt]{article}
\usepackage{amsfonts}
\usepackage{amsmath}
\usepackage{amssymb}
\usepackage[mathscr]{eucal}
\usepackage{fullpage}
\usepackage[usenames]{color}
\usepackage{hyperref}    

\def\eod{\vrule height 6pt width 5pt depth 0pt}
\newenvironment{proof}{\noindent {\bf Proof:} \hspace{.2em}}
{\hspace*{\fill}{\eod}}

\newcommand{\floor}[1]{\left\lfloor #1 \right\rfloor}

\newcommand{\maj}{\mathsf{maj}}
\newcommand{\exc}{\mathsf{exc}}
\newcommand{\antiexc}{\mathsf{nexc}}
\newcommand{\nexc}{\mathsf{nexc}}
\newcommand{\sgn}{\mathsf{sign}}
\newcommand{\wexc}{\mathsf{wkexc}}
\newcommand{\ExcSa}{\mathsf{ExcSet}}
\newcommand{\des}{\mathsf{des}}
\newcommand{\asc}{\mathsf{asc}}

\newcommand{\DescSet}{\mathsf{Des\_Set}}
\newcommand{\ascSet}{\mathsf{Asc\_Set}}
\newcommand{\inv}{\mathsf{inv}}
\newcommand{\id}{\mathsf{id}}
\newcommand{\cyc}{\mathsf{cyc}}
\newcommand{\nest}{\mathsf{nest}}
\newcommand{\stat}{\mathsf{stat}}

\newcommand{\np}{ \mathsf{pos\_n}}
\newcommand\twoonethree{\operatorname{2-13}}
\newcommand\threeonetwo{\operatorname{31-2}}

\newcommand{\SSS}{\mathfrak{S}}
\newcommand{\BB}{\mathfrak{B}}

\newcommand{\DD}{\mathfrak{D}}

\newcommand{\Der}{\mathfrak{SD}}
\newcommand{\SD}{\mathfrak{SD}}
\newcommand{\nhalf}{\lfloor n/2 \rfloor}

\newcommand{\nmrhalf}{\lfloor (n-r)/2 \rfloor}
\newcommand{\AAA}{\mathcal{A}}

\newcommand{\PP}{ \mathrm{PalindPoly}}
\newcommand{\cnos}{\mathrm{ \operatorname{c-o-s}}}

\newcommand{\SB}{\mathrm{SgnB}}
\newcommand{\GE}{ \mathsf{AExc}}
\newcommand{\ADE}{ \mathsf{ADerExc}}

\newcommand{\cyctyp}{ \mathsf{cyctype}}
\newcommand{\EE}{ \mathsf{BExc}}
\newcommand{\EEsgnE}{ \mathsf{SgnBExc}}

\newcommand{\FFsgnE}{ \mathsf{SgnDExc}}
\newcommand{\FF}{ \mathsf{DExc}}

\newcommand{\FFBminusD}{ \mathsf{(B\mbox{-}D)Exc}}

\newcommand{\Negs}{\mathsf{Negs}}
\newcommand{\FFT}{\mathsf{FFT}}
\newcommand{\lngt}{\mathsf{len}}

\newcommand{\pos}{ \mathsf{pos}}

\newtheorem{theorem}{Theorem}

\newtheorem{corollary}[theorem]{Corollary}

\newtheorem{remark}[theorem]{Remark}

\newtheorem{lemma}[theorem]{Lemma}

\newcommand{\ZZ}{ \mathbb{Z}}
\newcommand{\QQ}{ \mathbb{Q}}

\newcommand{\comment}[1]{}

\newcommand{\ob}[1]{\overline{#1}}

\begin{document}
\title{Gamma positivity of the Excedance based Eulerian
polynomial in positive elements of Classical Weyl Groups}

\author{Hiranya Kishore Dey\\
Department of Mathematics\\
Indian Institute of Technology, Bombay\\
Mumbai 400 076, India.\\
email: hkdey@math.iitb.ac.in
\and
Sivaramakrishnan Sivasubramanian\\
Department of Mathematics\\
Indian Institute of Technology, Bombay\\
Mumbai 400 076, India.\\
email: krishnan@math.iitb.ac.in
}

\maketitle

\section{Introduction}
\label{chap:intro}
Let $f(t) \in \QQ[t]$ be a degree $n$ univariate polynomial with 
$f(t) = \sum_{i=0}^n a_i t^i$ where $a_i \in \QQ$ with 
$a_n \not= 0$.  Let $r$ be the least non-negative integer such that 
$a_r \not= 0$.  Define $\lngt(f) = n-r$.  $f(t)$ is 
said to be palindromic if $a_{r+i} = a_{n-i}$ for $0 \leq i \leq (n-r)/2$.  
Define the {\it center of 
symmetry} of $f(t)$ to be $\cnos(f(t)) =  (n+r)/2$.  
Note that for a palindromic 
polynomial $f(t)$, its center of symmetry $\cnos(f(t))$ could be half integral.

Let $\PP_{(n+r)/2}(t)$ denote the vector space of palindromic 
univariate polynomials $f(t) = \sum_{i=0}^n a_i t^i$ with $r$ being 
the least non-negative integer such that $a_r > 0$ and with 
$\cnos(f(t)) = (n+r)/2$.  Clearly, 
$\displaystyle \Gamma = \{ t^{r+i}(1+t)^{n-r-2i}: 0 \leq i \leq \nmrhalf \}$
is a basis of $\PP_{(n+r)/2}(t)$.  Thus, if $f(t) \in \PP_{(n+r)/2}(t)$,
then we can write
$f(t) = \sum_{i=0}^{\nmrhalf} \gamma_{n,i} t^{r+i} (1+t)^{n-2i}$.
$f(t)$ is said to be {\sl ``gamma positive"} if $\gamma_{n,i} \geq 0$ for 
all $i$ (that is, if $f(t)$ has positive coefficients when expressed in 
the basis $\Gamma$).

For a positive integer $n$, let $[n] = \{1,2,\ldots,n \}$ and let
$\SSS_n$ be the set of permutations on $[n]$.  For
$\pi = \pi_1,\pi_2,\ldots,\pi_n \in \SSS_n$, define its excedance
set as $\ExcSa(\pi) = \{ i \in [n]: \pi_i > i\}$  and its number of 
excedances as $\exc(\pi) = |\ExcSa(\pi)|$. Define its number of 
non-excedances as $\nexc(\pi) = | \{ i \in [n]: \pi_i \leq  i\}|$.
For $\pi \in \SSS_n$, define its number of inversions as $\inv(\pi) = 
| \{ 1 \leq i < j \leq n : \pi_i > \pi_j \} |$.  Let 
$\DescSet(\pi) = \{i \in [n-1]: \pi_i > \pi_{i+1} \}$ and 
$\ascSet(\pi) = \{i \in [n-1]: \pi_i < \pi_{i+1} \}$
be its set of descents and  ascents respectively. 
Let $\des(\pi) = |\DescSet(\pi)|$ be its number of descents 
and $\asc(\pi) = |\ascSet(\pi)|$  be its number of ascents. The polynomials 
$A_n(t)= \sum_{\pi \in \SSS_n} t^{\des(\pi)}$ are the classical 
Eulerian polynomials. Let $\AAA_n \subseteq \SSS_n$ be the subset of
even permutations.  Define

\vspace{-4 mm}

\begin{eqnarray}
\label{eqn:exc}
\GE_n(t) & = & \sum_{\pi \in \SSS_n} t^{\exc(\pi)} 
\mbox{ 
\hspace{ 3 mm}
and 
\hspace{ 3 mm}
}
\GE_n(s,t)  =  \sum_{\pi \in \SSS_n} t^{\exc(\pi)} s^{\antiexc(\pi)-1}, \\
\label{eqn:exc_even}
\GE_n^{+}(t) & = & \sum_{\pi \in \AAA_n} t^{\exc(\pi)} 
\mbox{ 
\hspace{ 3 mm}
and 
\hspace{ 3 mm}
}
\GE_n^{+}(s,t)  =  \sum_{\pi \in \AAA_n} t^{\exc(\pi)} s^{\antiexc(\pi)-1}, \\
\label{eqn:exc_odd}
\GE_n^{-}(t) & = & \sum_{\pi \in \SSS_n -
	\AAA_n} t^{\exc(\pi)}
\mbox{ 
	\hspace{ 3 mm}
	and 
	\hspace{ 3 mm}}
\GE_n^{-}(s,t)  =  \sum_{\pi \in \SSS_n - \AAA_n}
t^{\exc(\pi)}s^{\antiexc(\pi)-1}.
\end{eqnarray}

It is a well known result of MacMahon \cite{macmahon-book} that both 
descents and excedances are equidistributed over $\SSS_n$. 
That is, for all non negative integers $n$, $A_n(t)= \GE_n(t)$.
It is well known that the Eulerian polynomials $\GE_n(t)$ are  
palindromic (see Graham, Knuth and Patashnik \cite{graham-knuth-patashnik}).
Gamma positivity of $\GE_n(t)$  was first proved by 
Foata and Sch{\"u}tzenberger (see \cite{foata-schutzenberger-eulerian}).
Foata and Strehl \cite{foata-strehl} later used a group action
based proof which has been termed as ``valley hopping" by
Shapiro, Woan and Getu \cite{shapiro-woan-getu_runs_slides_moments}.
This approach gives a combinatorial interpretation for the gamma 
coefficients.
Several refinements of the gamma positivity of $\GE_n(t)$
are known when enumeration is done with respect to both descents 
$\des()$ and with respect to excedance $\exc()$. 
For two statistics $\twoonethree, 
\threeonetwo: \SSS_n \rightarrow \ZZ_{\geq 0}$,
Br{\"a}nd{\'e}n \cite{branden-actions_on_perms_unimodality_descents}
and Shin-and-Zeng 
\cite{shin-zeng-eulerian-continued-fraction,shin-zeng_symmetric_unimodal_excedances_colored_perms} 
have shown a $p,q$-refinement by showing  that 
the polynomial

\vspace{-6 mm}

$$A_n(p,q,t) = \sum_{\pi \in \SSS_n} p^{\twoonethree(\pi)}q^{\threeonetwo(\pi)}t^{\des(\pi)}
= \sum_{i=0}^{\nhalf} a_{n,i}(p,q) t^i(1+t)^{n-2i}$$
where the $a_{n,i}(p,q)$'s are polynomials with positive coefficients.  Similarly, Shareshian
and Wachs \cite{shareshian-wachs-gamma-positivity-variations} have shown for statistics
$\des^*, \maj: \SSS_n \mapsto \ZZ_{\geq 0}$ that the polynomial 

\vspace{-3 mm}

$$\GE_n(p,q,t) = \sum_{\pi \in \SSS_n} p^{\des^*(\pi)}q^{\maj(\pi) - \exc(\pi)}t^{\exc(\pi)}
= \sum_{i=0}^{\nhalf} \gamma_{n,i}(p,q) t^i(1+t)^{n-2i}$$
where the $\gamma_{n,i}(p,q)$'s are polynomials with positive coefficients.  Dey and 
Sivasubramanian had recently given gamma positivity results (see 
\cite{siva-dey-gamma_positive_descents_alt_group}) when one sums descents over
$\AAA_n$.  In this paper, we consider the case when we sum excedances over
$\AAA_n$.  Two of our main results in this paper are the following.

\begin{theorem}
\label{thm:maintheoremforexccedance}
For all positive integers $n \geq 5$ with $n \equiv 1$ (mod 2),  
$\GE_n^{+}(s,t)$ and $\GE_n^{-}(s,t)$ 
are gamma positive, with both polynomials having the 
same center of symmetry $(n-1)/2$.   
\end{theorem}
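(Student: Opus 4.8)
The plan is to use a group-action (valley-hopping style) argument adapted to the excedance statistic, combined with a sign-tracking refinement that separates even from odd permutations. First I would recall the classical ``excedance'' analogue of Foata--Strehl action: for $\pi \in \SSS_n$, one views $\pi$ through its cycle structure (or equivalently through a suitable planar/word encoding), and there is an involution on each orbit that toggles whether a given non-fixed, non-``double-excedance/double-drop'' letter contributes; the orbits are indexed by the configurations in which every position is a ``valley'' or ``peak'' in the excedance sense, and within an orbit of size $2^k$ the excedance number ranges so that $\sum_{\sigma \in \text{orbit}} t^{\exc(\sigma)} = t^{r}(1+t)^{k}$ for the appropriate $r$. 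Summing over all orbits gives the gamma expansion of $\GE_n(t)$ with nonnegative coefficients, and the bivariate refinement with $s^{\nexc-1}$ works because $\exc + \nexc = n$ is constant, so each orbit contributes $(st)^{r'}(s+t)^{k}/s$ type terms, keeping the center of symmetry at $(n-1)/2$ and palindromicity in the two-variable sense.

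The key new step is to control the \emph{sign} of the permutations in each orbit. The action used in valley hopping changes $\pi$ at one ``mobile'' letter at a time; I would compute how the number of inversions (equivalently the parity, via $\sgn(\pi) = (-1)^{n - \cyc(\pi)}$) changes under each generator of the group action. If each single toggle flips the parity, then an orbit of size $2^k$ with $k \geq 1$ splits evenly into $2^{k-1}$ even and $2^{k-1}$ odd permutations, and moreover the even half and the odd half each realize the full range of excedance values within that orbit in a balanced way — so that orbit contributes equally (up to the same $t^{r'}(1+t)^{k-1}$-type gamma vector, with possibly one extra factor of $t$ or of $1$) to $\GE_n^{+}(s,t)$ and to $\GE_n^{-}(s,t)$. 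The orbits of size $1$ (the fully-folded representatives, which have no mobile letters) must be handled separately: these are precisely the permutations all of whose cycles are ``valley-shaped'', and one must check directly that for odd $n \geq 5$ the even ones among them and the odd ones among them each give gamma-positive contributions with center of symmetry $(n-1)/2$. This singleton analysis is where I expect the real work to lie, since the parity of a fully-folded permutation is not obviously distributed the way one wants; I anticipate needing a separate bijection or a generating-function computation (perhaps a continued-fraction / Laguerre-type expansion in the spirit of Shin--Zeng) to pin down $\sum_{\text{singletons}} \sgn(\pi) t^{\exc(\pi)} s^{\nexc(\pi)-1}$ and show the resulting ``even minus odd'' and ``even plus odd'' halves are both gamma positive.

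Assembling the pieces: writing $\GE_n(s,t) = \GE_n^{+}(s,t) + \GE_n^{-}(s,t)$, the size-$\geq 2$ orbits contribute a common gamma-positive polynomial $P_n(s,t)$ to each of the two target polynomials, so $\GE_n^{\pm}(s,t) = P_n(s,t) + S_n^{\pm}(s,t)$ where $S_n^{\pm}$ is the singleton contribution; it then suffices to prove $S_n^{+}(s,t)$ and $S_n^{-}(s,t)$ are each gamma positive with center of symmetry $(n-1)/2$, which I would do by the direct/recursive analysis above, using $n$ odd and $n \geq 5$ crucially (small odd $n$, and all even $n$, are genuine exceptions, consistent with the hypothesis). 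Finally I would verify the center-of-symmetry claim: since $\exc$ and $\nexc - 1$ are exchanged by inversion $\pi \mapsto \pi^{-1}$... no — rather, by the standard complementation on excedances — both $\GE_n^{+}$ and $\GE_n^{-}$ inherit the palindromic symmetry of $\GE_n$ restricted to each part (one checks the relevant involution preserves parity for odd $n$), giving common center $(n-1)/2$. The main obstacle, to repeat, is the singleton (fully-folded) orbits: controlling both their excedance distribution and their sign simultaneously is the crux, and the oddness of $n$ is exactly what makes the sign split favorably.
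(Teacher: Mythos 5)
Your proposal has a genuine gap at its core: the behavior of $\sgn(\pi)$ under the valley-hopping toggles is left as an unverified hypothesis, and neither of the two natural possibilities makes your argument go through. If the toggles preserved sign (as the cycle-type-preserving cyclic valley hopping of Sun--Wang does on derangements), every orbit would lie entirely in $\AAA_n$ or in its complement, and both $\GE_n^{+}(t)$ and $\GE_n^{-}(t)$ would be sums of terms $t^{r}(1+t)^{n-1-2r}$, hence palindromic and gamma positive for \emph{all} $n$ --- contradicting Lemma \ref{lem:palindromic}, which shows palindromicity fails for even $n$. If instead each toggle flipped the sign while changing $\exc$ by one, an orbit of size $2^{k}$ would contribute $t^{r}\bigl((1+t)^{k}+(1-t)^{k}\bigr)/2$ to one of $\GE_n^{\pm}(t)$ and the complementary half to the other; already for $k=2$ the polynomial $1+t^{2}=(1+t)^{2}-2t$ has a negative gamma coefficient, so your claim that the even and odd halves of an orbit each contribute a gamma-positive $t^{r'}(1+t)^{k-1}$-type piece is false. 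The sign must therefore interact with the orbit structure in a subtler way (presumably through the fixed points), and pinning that down --- together with the singleton analysis you explicitly defer, and the role of $n$ odd, $n\geq 5$, which your outline never actually uses --- is the entire content of the theorem rather than a detail. A consistency check you do not exploit: Corollary \ref{cor:mantaci_theorem} forces $\GE_n^{+}(s,t)-\GE_n^{-}(s,t)=(s-t)^{n-1}$, so $\GE_n^{\pm}(s,t)=\frac{1}{2}\bigl(A_n(s,t)\pm(s-t)^{n-1}\bigr)$, and the theorem is equivalent to showing that the gamma vector of $A_n(s,t)$ dominates, coefficientwise in absolute value, that of $(s-t)^{n-1}=\bigl((s+t)^2-4st\bigr)^{(n-1)/2}$; your outline never makes this (or any equivalent) quantitative comparison.

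For contrast, the paper's route is entirely different: it uses Foata's first fundamental transformation to derive the recurrences of Lemma \ref{lem:recurrenceforAEXc_n^+}, iterates them four times to obtain Lemma \ref{lem:induct_by_4}, in which $\GE_{n+4}^{\pm}(s,t)$ is a combination of $\GE_{n}^{\pm}(s,t)$ and its $D$-derivatives with explicit gamma-positive coefficients $L_i(s,t)$ whose centers of symmetry align so that every term has center $(n+3)/2$, and then inducts from the base cases $n=5$ and $n=7$. A combinatorial proof along your lines would be valuable (it would interpret the gamma coefficients), but as written your argument is a program with its hardest steps unexecuted, not a proof.
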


\begin{theorem} 
\label{thm:sumof2forevenAtype}
For all even positive integers $n=2m$ with $n \geq 4$, 
$\GE_n^+(t)$ and $\GE_n^-(t)$ can be written as a 
sum of two gamma positive polynomials
whose centers of symmetry differ by one.
\end{theorem}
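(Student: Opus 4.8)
The plan is to reduce the whole statement to the single identity
\[
\sum_{\pi \in \SSS_n} \sgn(\pi)\, t^{\exc(\pi)} \;=\; (1-t)^{n-1} .
\]
This is quick: the left-hand side equals $\det M$, where $M$ is the $n\times n$ matrix with $M_{ij}=t$ for $j>i$ and $M_{ij}=1$ for $j\le i$; subtracting row $i+1$ from row $i$ for $i=1,\dots,n-1$ turns $M$ into a matrix in which the only surviving Leibniz term is the one indexed by the long cycle $(1\,2\,\cdots\,n)$, giving $(-1)^{n-1}(t-1)^{n-1}$. Since $\GE_n^+(t)+\GE_n^-(t)=\GE_n(t)=A_n(t)$ by MacMahon's theorem, this yields
\[
\GE_n^+(t)=\frac{A_n(t)+(1-t)^{n-1}}{2},\qquad
\GE_n^-(t)=\frac{A_n(t)-(1-t)^{n-1}}{2}.
\]

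Now fix $n=2m$. As $A_n(t)$ is palindromic of odd degree $n-1$, it vanishes at $t=-1$, so $U(t):=A_n(t)/(1+t)$ is a polynomial; plugging the classical $\Gamma$-expansion $A_n(t)=\sum_{k}\gamma_{n,k}\,t^{k}(1+t)^{n-1-2k}$ (with all $\gamma_{n,k}\ge0$, the Foata--Sch\"utzenberger theorem) into the quotient gives $U(t)=\sum_{k}\gamma_{n,k}\,t^{k}(1+t)^{n-2-2k}$, so $U$ is palindromic with center of symmetry $m-1$ and is gamma positive with the same coefficients $\gamma_{n,k}$. Writing $A_n=U+tU$ and $(1-t)^{n-1}=W-tW$ with $W(t):=(1-t)^{n-2}$, the proposed decomposition is
\[
\GE_n^+(t)\;=\;\underbrace{\tfrac12\bigl(U(t)+W(t)\bigr)}_{\text{center }m-1}
\;+\;\underbrace{t\cdot\tfrac12\bigl(U(t)-W(t)\bigr)}_{\text{center }m} ;
\]
the first summand is palindromic with center $m-1$, the second with center $m$, and both are nonzero when $n\ge4$ (since $A_n$ has strictly positive coefficients whereas $(1\pm t)(1-t)^{n-2}$ does not).

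It remains to show that the two summands are gamma positive. Since $W=\bigl((1+t)^2-4t\bigr)^{m-1}=\sum_{k}\binom{m-1}{k}(-4)^{k}\,t^{k}(1+t)^{n-2-2k}$, the gamma coefficients of $\tfrac12(U\pm W)$ are $\tfrac12\bigl(\gamma_{n,k}\pm(-4)^{k}\binom{m-1}{k}\bigr)$, and, because the sign of $(-4)^{k}$ alternates with $k$, for \emph{both} choices of overall sign these are nonnegative for every $k$ precisely when
\[
\gamma_{n,k}\;\ge\;\binom{m-1}{k}4^{k}\qquad (0\le k\le m-1) ,
\]
and multiplying by $t$ preserves gamma positivity, so this inequality is exactly what is needed. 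This inequality is the heart of the argument, and proving it is the main obstacle. The plan is induction on $m$ via the recursion $\gamma_{n,k}=(k+1)\gamma_{n-1,k}+(2n-4k)\gamma_{n-1,k-1}$, itself obtained by substituting the $\Gamma$-expansion into $A_{n+1}(t)=(1+nt)A_n(t)+t(1-t)A_n'(t)$. Applying it twice expresses $\gamma_{2m,k}$ as a combination, with coefficients that are explicit polynomials in $m$ and $k$ nonnegative on $0\le k\le m-1$, of $\gamma_{2m-2,k}$, $\gamma_{2m-2,k-1}$ and $\gamma_{2m-2,k-2}$; one checks the coefficient of $\gamma_{2m-2,k}$ equals $(k+1)^{2}\ge1$ and that of $\gamma_{2m-2,k-1}$ is at least $4$, so $\gamma_{2m,k}\ge\gamma_{2m-2,k}+4\gamma_{2m-2,k-1}$, whereupon the inductive hypothesis together with Pascal's rule $\binom{m-1}{k}=\binom{m-2}{k}+\binom{m-2}{k-1}$ closes the step; the base $m=2$ is the trivial check $\gamma_{4,0}=1\ge1$, $\gamma_{4,1}=8\ge4$. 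Carrying out these elementary coefficient estimates carefully is where the real work lies.

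Finally, the statement for $\GE_n^-$ follows from that for $\GE_n^+$ by reflection. For $n$ even one computes directly from the formulas above that $\GE_n^-(t)=t^{n-1}\GE_n^+(1/t)$. The map $f(t)\mapsto t^{n-1}f(1/t)$ sends a palindromic polynomial with center $c$ to one with center $n-1-c$ and carries a $\Gamma$-expansion to a $\Gamma$-expansion with the same coefficients, hence preserves gamma positivity; since $(n-1)-(m-1)=m$ and $(n-1)-m=m-1$, applying it to the decomposition of $\GE_n^+$ yields a decomposition of $\GE_n^-$ as a sum of two gamma positive polynomials with centers $m$ and $m-1$, again differing by one.
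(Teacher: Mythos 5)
Your proof is correct, but it takes a genuinely different route from the paper's. The paper proves this theorem by induction on $m$ using the bivariate recurrence $\GE_{2m+2}^+(s,t)=s\GE_{2m+1}^+(s,t)+t\GE_{2m+1}^-(s,t)+stD\GE_{2m+1}^+(s,t)$: it invokes Theorem \ref{thm:maintheoremforexccedance} (gamma positivity of $\GE_{2m+1}^{\pm}(s,t)$, itself proved via a four-step recurrence) to handle the first two terms, and the general splitting Lemma \ref{lem:more_gamma_positive} to break the odd-length derivative term into two gamma positive pieces with centers $m$ and $m+1$; the resulting decomposition is recursive rather than explicit. You instead start from the signed identity $\sum_{\pi\in\SSS_n}\sgn(\pi)\,t^{\exc(\pi)}=(1-t)^{n-1}$ (which the paper also has, as the $s=1$ specialization of Corollary \ref{cor:mantaci_theorem}, though your determinant proof is self-contained) and produce the completely explicit decomposition $\GE_n^+=\tfrac12(U+W)+t\cdot\tfrac12(U-W)$ with $U=A_n(t)/(1+t)$ and $W=(1-t)^{n-2}$, reducing everything to the clean inequality $\gamma_{2m,k}\ge 4^k\binom{m-1}{k}$ on the classical Eulerian gamma coefficients; your two-fold application of the standard recursion $\gamma_{n,k}=(k+1)\gamma_{n-1,k}+(2n-4k)\gamma_{n-1,k-1}$ does establish this, since the coefficient of $\gamma_{2m-2,k-1}$ works out to $(k+1)(4m-2-4k)+k(4m-4k)\ge 4$ for $1\le k\le m-1$, exactly as you claim, and Pascal's rule closes the induction. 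Your argument is independent of the odd-$n$ Theorem \ref{thm:maintheoremforexccedance}, yields closed-form summands (which for $n=4$ reproduce the paper's stated base case $1+4t+t^2$ and $6t^2$), and gives an explicit lower bound on Eulerian gamma coefficients as a by-product; the paper's approach buys uniformity with the bivariate machinery used throughout and avoids any coefficient estimates. Your final reflection step $\GE_n^-(t)=t^{n-1}\GE_n^+(1/t)$ is also correct, and could be replaced by the even more direct observation $\GE_n^-=\tfrac12(U-W)+t\cdot\tfrac12(U+W)$.
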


Similar results are known when we sum excedances over derangements.  Let
$\SD_n = \{ \pi \in \SSS_n: \pi_i \not= i \mbox{ for all } i\}$ denote the 
set of derangements in $\SSS_n$.  Let $\SD_n^+ = \SD_n \cap \AAA_n$ and 
$\SD_n^- = \SD_n \cap (\SSS_n - \AAA_n)$. Define

\vspace{-4 mm}

\begin{eqnarray}
\label{eqn:der_exc}
\ADE_n(t) & = & \sum_{\pi \in \SD_n} t^{\exc(\pi)} 
\mbox{ 
\hspace{ 3 mm}
and 
\hspace{ 3 mm}
}
\ADE_n(s,t)  =  \sum_{\pi \in \SD_n} t^{\exc(\pi)} s^{\antiexc(\pi)-1} \\
\label{eqn:der_exc_even}
\ADE_n^{+}(t) & = & \sum_{\pi \in \SD_n^+} t^{\exc(\pi)} 
\mbox{ 
\hspace{ 3 mm}
and 
\hspace{ 3 mm}
}
\ADE_n^{+}(s,t)  =  \sum_{\pi \in \SD_n^+} t^{\exc(\pi)} s^{\antiexc(\pi)-1} \\
\label{eqn:der_exc_odd}
\ADE_n^{-}(t) & = & \sum_{\pi \in \SD_n^-} t^{\exc(\pi)}
\mbox{ 
	\hspace{ 3 mm}
	and 
	\hspace{ 3 mm}}
\ADE_n^{-}(s,t)  =  \sum_{\pi \in \SD_n^-} t^{\exc(\pi)}s^{\antiexc(\pi)-1}.
\end{eqnarray}

The polynomial $\ADE_n(t)$ is known to be real rooted (see Zhang \cite{zhang-q-derang-poly})
and easily seen to be palindromic.  Hence it is gamma positive (see 
Petersens book \cite[Page 82]{petersen-eulerian-nos-book}).  Shin and Zeng 
in \cite{shin-zeng-eulerian-continued-fraction,shin-zeng_symmetric_unimodal_excedances_colored_perms}
have proved 
the following refinement with respect to the number of inversions $\inv(\pi)$, number
of cycles $\cyc(\pi)$ and the nesting number $\nest(\pi)$.

\begin{theorem}[Shin and Zeng]
\label{thm:shin_zeng_excedance}
For all positive integers $n$, and for all choices $\stat(\pi) \in \{\cyc(\pi), \inv(\pi), \nest(\pi) \}$,
the polynomial 
$\sum_{\pi \in \SD_n} q^{\stat(\pi)}t^{\exc(\pi)} = \sum_{i=0}^{\nhalf} b_{n,i}(q) t^i(1+t)^{n-2i}$
where $b_{n,i}(q)$ is a polynomial with positive integral coefficients.
\end{theorem}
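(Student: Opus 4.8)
The plan is to derive Theorem~\ref{thm:shin_zeng_excedance} from an explicit Jacobi-type continued fraction for the generating function
$$\sum_{n\ge 0}\Bigl(\sum_{\pi\in\SD_n}q^{\stat(\pi)}t^{\exc(\pi)}\Bigr)z^n,$$
followed by a substitution that converts the shape of that continued fraction directly into gamma positivity. For the first ingredient, use the Foata--Zeilberger correspondence (equivalently Biane's) between $\SSS_n$ and labelled Motzkin paths of length $n$, i.e.\ Laguerre histories: under it $\exc$ becomes the number of up steps, $\cyc$ becomes the number of steps of one distinguished colour, and $\crs$ and $\nest$ become linear functions of the path labels, while $\inv$ is expressible through $\crs$, $\nest$ and explicit path data. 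The condition $\pi\in\SD_n$ (no fixed point) forbids exactly one distinguished type of level step, hence merely deletes one term from the weight of a height-$h$ level step. By Flajolet's theorem, the generating function above then equals
$$\cfrac{1}{1-b_0z-\cfrac{\lambda_1z^2}{1-b_1z-\cfrac{\lambda_2z^2}{1-\cdots}}}$$
with coefficients $b_h=b_h(q,t)$, $\lambda_h=\lambda_h(q,t)$ that one writes down explicitly for each of the three statistics; for $\stat=\cyc$, say, one gets $b_h=h(1+t)$ --- the missing constant term being precisely the effect of the derangement restriction --- and $\lambda_h=t\,\mu_h(q)$ with $\mu_h(q)$ having nonnegative integer coefficients.

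The second ingredient is the following substitution. Suppose that, for the $\stat$ at hand, $b_h=(1+t)\,\beta_h$ and $\lambda_h=t\,\mu_h$ for every $h$, with $\beta_h$ and $\mu_h$ polynomials in $q$ with nonnegative coefficients. Put $w=(1+t)z$ and $x=t/(1+t)^2$ and substitute $z=w/(1+t)$ in the continued fraction; since $b_hz=\beta_hw$ and $\lambda_hz^2=\mu_h\,x\,w^2$, it becomes
$$\sum_{n\ge 0}G_n(x)\,w^n=\cfrac{1}{1-\beta_0w-\cfrac{\mu_1 x w^2}{1-\beta_1w-\cfrac{\mu_2 x w^2}{1-\cdots}}},$$
where $G_n(x)$, the coefficient of $w^n$ on the left, is a polynomial in $x$ of degree at most $\nhalf$ and expands as a weighted sum over Motzkin paths of length $n$ in which each up step carries the variable $x$; hence each $[x^i]G_n(x)$ is a sum of products of $\beta_h$'s and $\mu_h$'s, so has nonnegative coefficients in $q$. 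Undoing the substitution gives
$$\sum_{\pi\in\SD_n}q^{\stat(\pi)}t^{\exc(\pi)}=(1+t)^nG_n\!\left(\tfrac{t}{(1+t)^2}\right)=\sum_{i\ge 0}\bigl([x^i]G_n(x)\bigr)\,t^i(1+t)^{n-2i},$$
which is exactly the claimed expansion with $b_{n,i}(q)=[x^i]G_n(x)$; integrality of $b_{n,i}(q)$ follows because the change of basis between $\{t^j(1+t)^{n-2j}\}$ and $\{t^j\}$ is unimodular, and this step also recovers palindromicity with centre $n/2$.

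The main obstacle is the remaining verification: for each $\stat\in\{\cyc,\inv,\nest\}$, that the coefficients produced in the first step really do satisfy $b_h\in(1+t)\,\QQ_{\ge 0}[q]$ and $\lambda_h\in t\,\QQ_{\ge 0}[q]$. For $\stat=\cyc$ this is immediate from the formulas above; for $\stat=\inv$, and especially $\stat=\nest$, it needs the $p,q$-refined ``master'' continued fraction for permutations enumerated by crossings and nestings (de~M\'edicis--Viennot, Corteel, with the refinement of Shin and Zeng), the identity relating $\inv$ to $\crs$ and $\nest$, and a careful check that the one term removed by the derangement restriction is precisely the term whose presence would otherwise spoil divisibility of $b_h$ by $1+t$. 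An alternative approach, in the spirit of the valley-hopping proofs recalled in the introduction, would be to build a cycle-type-preserving analogue of the Foata--Strehl (``valley hopping'') action on $\SD_n$ whose orbits are intervals in the $\exc$ statistic and on which $\stat$ is constant, reading $b_{n,i}(q)$ off the orbit representatives; the difficulty there is to find a single action respecting $\cyc$, $\inv$ and $\nest$ simultaneously, nestings being the delicate case.
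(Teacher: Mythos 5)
The paper does not prove this statement at all: it is imported verbatim as a result of Shin and Zeng (\cite{shin-zeng-eulerian-continued-fraction,shin-zeng_symmetric_unimodal_excedances_colored_perms}), and the paper's own contribution is the refinement in Theorem \ref{thm:main_exc}, whose proof simply quotes the Shin--Zeng expansions \eqref{eqn:inv_der_shin_zeng} and \eqref{eqn:cyclenumberexcedance} and compares coefficients of powers of $q$. So there is no in-paper argument to compare against; your outline should be judged as a reconstruction of the original Shin--Zeng proof, and on that score it identifies the right machinery: the Foata--Zeilberger/Fran\c{c}on--Viennot correspondence to Laguerre histories, Flajolet's continued-fraction theorem, and the substitution $z=w/(1+t)$, $x=t/(1+t)^2$ that turns a J-fraction with $b_h\in(1+t)\,\QQ_{\ge 0}[q]$ and $\lambda_h\in t\,\QQ_{\ge 0}[q]$ into a manifestly gamma-positive expansion with $b_{n,i}(q)=[x^i]G_n(x)$ a subtraction-free sum over Motzkin paths. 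That mechanism is sound, and the degree bound $i\le\nhalf$ and the positivity of the resulting coefficients do follow as you say.

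The gap is the one you name yourself, and it is not a small one: the entire content of the theorem sits in the verification that, for each of $\cyc$, $\inv$ and $\nest$, the level-step weights $b_h(q,t)$ really are divisible by $(1+t)$ with quotient in $\QQ_{\ge 0}[q]$ and that the $\lambda_h$ have the required form, \emph{after} the derangement restriction deletes one term from $b_h$. For $\cyc$ this is a short computation, but for $\inv$ (which is not a linear function of $\crs$ and $\nest$ alone --- one needs the identity expressing $\inv$ through excedance data together with crossings and nestings) and especially for $\nest$, the weights involve $q$-integers such as $[h]_q$ and $q$-powers depending on the step heights, and whether the surviving part of $b_h$ factors as $(1+t)$ times a nonnegative polynomial is exactly the delicate point; asserting that ``one writes down explicitly'' these coefficients without doing so leaves the proof unfinished. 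As submitted, your argument is a correct plan plus an honest list of what remains, rather than a proof; to complete it you would need to state the three explicit $(b_h,\lambda_h)$ pairs and check the divisibility claim in each case. (Your alternative suggestion, a cycle-type-preserving valley-hopping action, is essentially the Sun--Wang route the paper mentions for Theorem \ref{thm:DerEn(t) Gamma-nonnegative}; as the paper notes, it is not known to track $\inv$ or $\nest$, so it cannot substitute for the continued-fraction computation here.)
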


We prove the following refinement of Theorem \ref{thm:shin_zeng_excedance} in this paper.

\begin{theorem}
\label{thm:main_exc}
For all positive integers $n$, and for all choices $\stat(\pi) \in \{\cyc(\pi), \inv(\pi)\}$,
the polynomials 
$\sum_{\pi \in \SD_n^+} q^{\stat(\pi)}t^{\exc(\pi)} = \sum_{i=0}^{\nhalf} b_{n,i}^+(q) t^i(1+t)^{n-2i}$
and 
$\sum_{\pi \in \SD_n^-} q^{\stat(\pi)}t^{\exc(\pi)} = \sum_{i=0}^{\nhalf} b_{n,i}^-(q) t^i(1+t)^{n-2i}$
where both $b_{n,i}^+(q)$ and $b_{n,i}^-(q)$ 
are polynomials with positive integral coefficients.
\end{theorem}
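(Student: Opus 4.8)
The plan is to sign-refine the cyclic valley-hopping action on derangements --- the cyclic version of valley hopping, attached here to the excedance statistic --- which gives a combinatorial proof of the gamma-positivity of $\ADE_n(t)$ and, carrying the weight $q^{\cyc}$, of the $\stat=\cyc$ case of Theorem \ref{thm:shin_zeng_excedance}. Recall the set-up: given $\sigma \in \SD_n$, classify each $j \in [n]$ as a \emph{cycle valley}, a \emph{cycle peak}, a \emph{cycle double rise} or a \emph{cycle double fall} according to the relative order of $\sigma^{-1}(j)$, $j$, $\sigma(j)$; the cycle valleys and the cycle peaks are equinumerous, say $i = i(\sigma)$ of each, the remaining $n-2i$ letters are ``hoppable'', and a hop toggles a hoppable letter between a double rise and a double fall, changing $\exc(\sigma)$ by exactly $1$. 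The hops generate an action of an elementary abelian $2$-group on $\SD_n$ whose orbits $\mathcal{O}$ each contain a unique element of minimal excedance $i(\mathcal{O})$, have size $2^{\,n-2i(\mathcal{O})}$, and satisfy $\sum_{\sigma \in \mathcal{O}} t^{\exc(\sigma)} = t^{\,i(\mathcal{O})}(1+t)^{\,n-2i(\mathcal{O})}$.

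The first point to record is that a hop merely permutes the letters lying on one common cycle of $\sigma$: it never merges or splits cycles. Thus the partition of $[n]$ into cycle supports is invariant along each orbit, hence so is $\cyc(\sigma)$, and hence so is $\sgn(\sigma) = (-1)^{\,n-\cyc(\sigma)} = (-1)^{\inv(\sigma)}$. In particular every orbit lies entirely in $\SD_n^{+}$ or entirely in $\SD_n^{-}$.

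For $\stat=\cyc$ this already settles the theorem: since $\cyc$ is constant on each orbit, $b_{n,i}(q) = \sum_{\mathcal{O}:\, i(\mathcal{O})=i} q^{\cyc(\mathcal{O})}$, and restricting this to the even (respectively odd) orbits defines $b_{n,i}^{+}(q)$ (respectively $b_{n,i}^{-}(q)$), polynomials with nonnegative integer coefficients satisfying $b_{n,i}^{+}(q)+b_{n,i}^{-}(q)=b_{n,i}(q)$, so that $\sum_{\sigma \in \SD_n^{\pm}} q^{\cyc(\sigma)} t^{\exc(\sigma)} = \sum_i b_{n,i}^{\pm}(q)\, t^i(1+t)^{n-2i}$.

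For $\stat=\inv$ the hard part is controlling $\inv$ along the pieces that collapse to the $t^i(1+t)^{n-2i}$: a hop relocates a letter within its cycle, so $\inv$ need not be constant on an orbit --- only its parity is, by the previous paragraph. First I would try to prove that $\inv$ is in fact preserved by every hop (this holds for small $n$, e.g.\ $n \le 4$), whereupon the $\cyc$ argument transfers verbatim with $q^{\inv}$ in place of $q^{\cyc}$. If that fails, I would instead run Shin and Zeng's own proof of the $\stat=\inv$ case of Theorem \ref{thm:shin_zeng_excedance} (through Laguerre histories / continued fractions), carry along with each summed object the sign $(-1)^{\,n-\cyc}$ of its underlying derangement, check that this sign is constant on each collapsing block --- which again comes down to the parity of $n-\cyc$ being read off from data the collapse preserves --- and split the resulting expansion by sign. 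Either way one obtains $b_{n,i}^{\pm}(q)$ with nonnegative integer coefficients summing to $b_{n,i}(q)$. I expect this $\inv$-bookkeeping to be the only genuine difficulty; it is also what confines the statement to $\stat \in \{\cyc,\inv\}$, since it is the parity of $n-\cyc$ that governs the sign while $\nest$ obeys no comparable parity constraint.
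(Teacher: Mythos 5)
Your treatment of $\stat=\cyc$ is correct and is a genuinely different route from the paper's. You use the Sun--Wang cyclic valley-hopping action: hops preserve cycle type, hence both $\cyc$ and $\sgn$ are constant on each orbit, every orbit lies wholly in $\SD_n^+$ or in $\SD_n^-$, and summing $q^{\cyc(\mathcal{O})}t^{i(\mathcal{O})}(1+t)^{n-2i(\mathcal{O})}$ over orbits of each sign gives the two expansions. The paper never touches the action for this theorem: it takes Shin and Zeng's identity \eqref{eqn:cyclenumberexcedance}, in which $f_{n,i}(q)=\sum_{\pi\in\SD_n(i)}q^{\cyc(\pi)}$ with $\SD_n(i)$ the set of derangements having $i$ excedances and no double excedances, and simply compares coefficients of $q^{2r}$ and $q^{2r+1}$ on both sides, since for fixed $n$ the parity of $\cyc(\pi)$ determines $\sgn(\pi)=(-1)^{n-\cyc(\pi)}$. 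Your version buys an orbit-by-orbit combinatorial interpretation of the $\gamma$-coefficients; the paper's buys brevity and, more importantly, a method that transfers verbatim to $\inv$.

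For $\stat=\inv$ there is a genuine gap. Your Plan A rests on the claim that $\inv$ is constant on every hopping orbit; you verify this only for $n\le 4$ and give no proof, and note that this claim is strictly stronger than the Shin--Zeng identity itself (which is an aggregated statement, not an orbit-by-orbit one), so it cannot be waved through. Your Plan B --- reopening the Laguerre-history/continued-fraction proof and checking that a sign is constant on each collapsing block --- is an unexecuted sketch of a nontrivial verification. Both plans overlook the observation that makes this case immediate: the sign of $\pi$ equals $(-1)^{\inv(\pi)}$, i.e.\ $(-1)^{\text{exponent of } q}$. Hence the even-degree part in $q$ of $\sum_{\pi\in\SD_n}q^{\inv(\pi)}t^{\exc(\pi)}$ is exactly $\sum_{\pi\in\SD_n\cap\AAA_n}q^{\inv(\pi)}t^{\exc(\pi)}$, while the even-degree part of $b_{n,i}(q)=\sum_{\pi\in\SD_n(i)}q^{\inv(\pi)}$ is the generating function over $\SD_n(i)\cap\AAA_n$ and so has nonnegative integer coefficients; extracting even and odd powers of $q$ from both sides of \eqref{eqn:inv_der_shin_zeng} finishes the proof without entering Shin and Zeng's argument at all. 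This is exactly what the paper does, for both statistics. Your closing remark about $\nest$ is on target: it is precisely the absence of such a parity link between the statistic and the sign that excludes it.
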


We generalize our results to the case when excedances 
are summed up over the elements with positive sign in classical Weyl groups. 
Let $\BB_n$ denote the group of signed permutations on 
$T_n = \{-n, -(n-1), \ldots,-1,1,2,\ldots,n \}$, that is 
$\sigma \in \BB_n$ consists of all permutations of $T_n$ that satisfy 
$\sigma(-i) = -\sigma(i)$
for all $i \in [n]$.  Similarly, let $\DD_n\subseteq \BB_n$ denote the
subset consisting of those elements of $\BB_n$ which have an even
number of negative entries.  
As both $\BB_n$ and $\DD_n$ are Coxeter 
groups, they have a natural notion of descent associated to them.  
Similar to the classical Eulerian polynomials, 
we thus have the Eulerian polynomials of type-B and type-D. 
There is a natural
notion of length in these groups and we get results when restricted
to elements with even length.  
Our results for Type-B Weyl groups are  Theorems 
\ref{thm:main_thm_for_Btype_excedance} and 
\ref{thm:sumof2foroddBtypeexcedance}.  
Similarly for Type-D Weyl groups, our main result is Theorem 
 \ref{thm:d_type_exccedance_even_integers}.

\section{Preliminaries on Gamma positive polynomials}
\label{subsec:gamma_prelims}

Most of the gamma positive polynomials in this work have homogeneous 
bivariate counterparts with the following  slightly more general
definition of gamma positivity. 
Let $f(s,t) = \sum_{i=0}^n a_i s^{n-i}t^i$ be 
a homogenous bivariate polynomial with $a_n \not= 0$ and 
let $r$ be the smallest non-negative integer such that $a_r \not= 0$.  
As before, define$f(s,t)$ to be palindromic if $a_{r+i} = a_{n-i}$ 
for all $i$ and define $\cnos(f(s,t)) = (n+r)/2$.  
	
Let $f(s,t)$ have $\cnos(f(s,t))=n/2$.  
$f(s,t)$ is said to be bivariate gamma positive if 
$f(s,t) = \sum_{i=0}^{n/2} \gamma_{n,i} (st)^i(s+t)^{n-2i}$ with $\gamma_{n,i}
\geq 0$ for all $i$.  Clearly, if $f(s,t)$ is 
a bivariate gamma positive polynomial, then $f(t) = f(s,t)|_{s=1}$ is 
clearly a univariate gamma positive palindromic polynomial with the same
center of symmetry.  We need the following lemmas.  Since all of 
them  are from  
\cite{siva-dey-gamma_positive_descents_alt_group}), we omit proofs.
	
\begin{lemma}
   \label{lem:prod_bivariate_gamma_nonneg}
		Let $f_1(s,t)$ and $f_2(s,t)$ be two bivariate gamma positive 
		polynomials with respective centers of symmetry 
		$\cnos(f_1(s,t)) = r_1 + (n-r_1)/2$ and 
		$\cnos(f_2(s,t)) = r_2 + (n-r_2)/2$.
		Then $f_1(s,t) f_2(s,t)$ is gamma positive with 
		$\cnos(f_1(s,t)f_2(s,t)) =  (r_1+r_2) + (2n-r_1-r_2)/2$.
\end{lemma}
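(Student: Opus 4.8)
The plan is to obtain the $\Gamma$-expansion of the product directly from those of $f_1$ and $f_2$, using the one algebraic fact that makes the bivariate homogeneous setting so convenient: a product of two $\Gamma$-basis elements is again a single $\Gamma$-basis element.

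First I would unwind the hypotheses. By the definition of bivariate gamma positivity (now with center $r_1+(n-r_1)/2$), the polynomial $f_1(s,t)$ is homogeneous of degree $n+r_1$ and can be written in the $\Gamma$-basis as
$$f_1(s,t) \;=\; \sum_{i=0}^{\floor{(n-r_1)/2}} \gamma^{(1)}_i\,(st)^{r_1+i}(s+t)^{\,n-r_1-2i},\qquad \gamma^{(1)}_i\ge 0,$$
and similarly $f_2(s,t)=\sum_{j=0}^{\floor{(n-r_2)/2}} \gamma^{(2)}_j\,(st)^{r_2+j}(s+t)^{\,n-r_2-2j}$ with all $\gamma^{(2)}_j\ge0$; one checks that each summand on the right has lowest power of $t$ equal to $r_1+i$ (resp. $r_2+j$) and highest power $n-i$ (resp. $n-j$), so these are genuine $\Gamma$-basis elements with the asserted center of symmetry.

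Next I would multiply the two expansions term by term and invoke the identity $(st)^{a}(s+t)^{b}\cdot(st)^{a'}(s+t)^{b'}=(st)^{a+a'}(s+t)^{b+b'}$, which gives
$$f_1(s,t)\,f_2(s,t)\;=\;\sum_{i,j}\gamma^{(1)}_i\gamma^{(2)}_j\,(st)^{(r_1+r_2)+(i+j)}(s+t)^{\,(2n-r_1-r_2)-2(i+j)}.$$
Grouping by $k=i+j$ yields $f_1f_2=\sum_k c_k\,(st)^{(r_1+r_2)+k}(s+t)^{(2n-r_1-r_2)-2k}$ with $c_k=\sum_{i+j=k}\gamma^{(1)}_i\gamma^{(2)}_j\ge0$, where $k$ runs from $0$ to $\floor{(n-r_1)/2}+\floor{(n-r_2)/2}\le(2n-r_1-r_2)/2$, so the exponent of $(s+t)$ stays non-negative and the right-hand side is a legitimate $\Gamma$-basis expansion. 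Every basis element appearing has center of symmetry $(r_1+r_2)+(2n-r_1-r_2)/2$ — which is exactly $\cnos(f_1)+\cnos(f_2)$ — so this is $\cnos(f_1f_2)$, and since all $c_k\ge0$ the product is bivariate gamma positive, as claimed.

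I do not expect a genuine obstacle: the whole substance is the multiplicativity of the $\Gamma$-basis under multiplication of homogeneous polynomials, and the only point needing a line of care is confirming that the collected expansion stays inside the admissible index range, which is just $\floor{x}+\floor{y}\le x+y$ applied to $x=(n-r_1)/2$, $y=(n-r_2)/2$. One could instead run the center-of-symmetry bookkeeping abstractly — multiplying homogeneous polynomials adds total degrees, hence halving adds centers — but it is cleanest to read it off the explicit expansion above.
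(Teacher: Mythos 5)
Your proof is correct, and it is the natural argument: the paper itself omits the proof of this lemma (deferring to the cited earlier work of Dey and Sivasubramanian), and the standard proof there is exactly your computation, namely that the $\Gamma$-basis elements multiply to single $\Gamma$-basis elements, so the gamma coefficients of the product are the nonnegative convolutions $c_k=\sum_{i+j=k}\gamma^{(1)}_i\gamma^{(2)}_j$ and the centers of symmetry add. Your index-range check $\lfloor (n-r_1)/2\rfloor+\lfloor (n-r_2)/2\rfloor\le (2n-r_1-r_2)/2$ is the only point of care and you handle it correctly.
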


	Let $D$ be the operator $\displaystyle \left( \frac{d}{ds} + \frac{d}{dt}
	\right)$ in $\QQ[s,t]$.  
	
	\begin{lemma}
		\label{lem:derivative_gamma_nonneg}
		Let $f(s,t)$ be a bivariate gamma positive polynomial with 
		$\cnos(f(s,t)) = n/2$.  Then, $g(s,t) = D f(s,t)$
		is gamma positive with $\cnos(g(s,t)) = (n-1)/2$.
	\end{lemma}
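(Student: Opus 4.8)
The plan is to reduce the statement to the behaviour of the operator $D$ on a single basis element $(st)^i(s+t)^{n-2i}$ and then observe that differentiating such an element produces another non-negative combination of gamma basis elements, one graded degree lower. Since $f(s,t)$ is bivariate gamma positive with $\cnos(f(s,t)) = n/2$, by definition we may write $f(s,t) = \sum_{i=0}^{\nhalf} \gamma_{n,i}\,(st)^i(s+t)^{n-2i}$ with every $\gamma_{n,i} \ge 0$; because $D$ is linear, it suffices to analyse $D\bigl[(st)^i(s+t)^{n-2i}\bigr]$ for each fixed $i$.

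First I would record the two elementary identities $D(st) = s+t$ and $D(s+t) = 2$. Applying the product rule to $(st)^i(s+t)^{n-2i}$ then yields
\[
D\bigl[(st)^i(s+t)^{n-2i}\bigr] \;=\; i\,(st)^{i-1}(s+t)^{n-2i+1} \;+\; 2(n-2i)\,(st)^i(s+t)^{n-2i-1}.
\]
Next I would observe that each summand on the right is a scalar multiple of a gamma basis element for degree $n-1$: the first term is $(st)^{i-1}(s+t)^{(n-1)-2(i-1)}$ and the second is $(st)^{i}(s+t)^{(n-1)-2i}$, both manifestly symmetric about $(n-1)/2$. The scalars $i$ and $2(n-2i)$ are non-negative because $0 \le i \le n/2$, and the only exponents that could go negative, namely $i-1$ and $n-2i-1$, occur solely with coefficient zero (the former when $i=0$, the latter when $2i=n$), so no genuinely ill-formed term ever appears.

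Summing these expressions against the weights $\gamma_{n,i} \ge 0$ exhibits $g(s,t) = Df(s,t)$ as a non-negative linear combination of the degree-$(n-1)$ gamma basis elements $(st)^{j}(s+t)^{(n-1)-2j}$, which is precisely the claim that $g$ is bivariate gamma positive with $\cnos(g(s,t)) = (n-1)/2$. There is no real obstacle beyond bookkeeping; the one point worth a moment's care is that the two families of basis elements obtained by differentiating different $\gamma_{n,i}$-terms all land in the same graded component (total degree $n-1$) and are indexed compatibly, so that collecting like terms is legitimate — but this follows immediately from the degree counts $2(i-1)+(n-2i+1) = n-1$ and $2i+(n-2i-1) = n-1$.
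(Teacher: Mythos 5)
Your proof is correct: the identities $D(st)=s+t$ and $D(s+t)=2$ together with the product rule give exactly the decomposition $D\bigl[(st)^i(s+t)^{n-2i}\bigr]=i\,(st)^{i-1}(s+t)^{(n-1)-2(i-1)}+2(n-2i)\,(st)^{i}(s+t)^{(n-1)-2i}$, and your handling of the boundary cases $i=0$ and $2i=n$ is the only point requiring care. The paper itself omits the proof, deferring to the authors' earlier work, but your argument is the standard one and there is nothing to add.
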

	
	The following are easy corollaries.
	
	\begin{corollary}
		\label{cor:gamma_nonneg}
		Let $f(s,t)$ be a bivariate gamma positive polynomial with 
		$\cnos(f(s,t)) = n/2$.  
		\begin{enumerate}
			\item Then, for a natural number $\ell$, 
			$\displaystyle g(s,t) = D^{\ell} f(s,t)$ is 
			gamma positive with $\cnos(g(s,t)) = (n-\ell)/2$.
			\item Then, $h(s,t) = (st)^i f(s,t)$ is gamma positive with 
			$\cnos( h(s,t) ) = i+ n/2$
			\item Then, $h(s,t) = (s+t) f(s,t)$ is gamma positive with
			$\cnos( h(s,t) ) = (n+1)/2$
		\end{enumerate}
	\end{corollary}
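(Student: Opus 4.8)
The plan is to deduce all three parts directly from Lemmas~\ref{lem:prod_bivariate_gamma_nonneg} and~\ref{lem:derivative_gamma_nonneg}, together with the explicit expansion $f(s,t) = \sum_{i=0}^{n/2} \gamma_{n,i}(st)^i(s+t)^{n-2i}$ with all $\gamma_{n,i} \geq 0$ guaranteed by the hypothesis.

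For part~(1), I would induct on $\ell$. The case $\ell = 0$ is the hypothesis, and $\ell = 1$ is precisely Lemma~\ref{lem:derivative_gamma_nonneg}: $Df(s,t)$ is gamma positive with $\cnos = (n-1)/2$. For the inductive step, suppose $D^{\ell-1}f(s,t)$ is gamma positive with center of symmetry $(n-\ell+1)/2$; since a gamma positive polynomial has its center of symmetry equal to half its total degree, $D^{\ell-1}f(s,t)$ plays the role of "$f$" in Lemma~\ref{lem:derivative_gamma_nonneg}, and applying that lemma once more gives that $D^\ell f(s,t)$ is gamma positive with $\cnos = (n-\ell)/2$. The one point worth noting is that the iteration never prematurely annihilates the polynomial: at each stage Lemma~\ref{lem:derivative_gamma_nonneg} delivers a genuine gamma positive polynomial with nonzero top coefficient, so the next application of $D$ is legitimate.

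For parts~(2) and~(3) I would simply substitute the gamma expansion and read off the answer. In part~(2), $(st)^i f(s,t) = \sum_{j=0}^{n/2} \gamma_{n,j}(st)^{i+j}(s+t)^{n-2j}$, a nonnegative combination of the basis elements $(st)^{k}(s+t)^{(n+2i)-2k}$ with $i \le k \le i+n/2$, hence gamma positive; its $t$-degrees range over $[i,\,n+i]$, so $\cnos = i + n/2$. (Equivalently, this is Lemma~\ref{lem:prod_bivariate_gamma_nonneg} with $f_2(s,t) = (st)^i$, which is gamma positive with center of symmetry $i$.) In part~(3), $(s+t)f(s,t) = \sum_{j=0}^{n/2}\gamma_{n,j}(st)^j(s+t)^{n+1-2j}$, again a manifestly nonnegative combination of the analogous basis in total degree $n+1$, hence gamma positive; its $t$-degrees range over $[0,\,n+1]$, giving $\cnos = (n+1)/2$. (Equivalently, apply Lemma~\ref{lem:prod_bivariate_gamma_nonneg} with $f_2(s,t) = s+t$, gamma positive with center of symmetry $1/2$.)

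There is no real obstacle here; the only care required is the bookkeeping of $\cnos$ after each operation and, in part~(1), confirming that successive applications of $D$ stay inside the class of gamma positive polynomials — both of which are already packaged into the two lemmas being invoked. This is why the statement is labelled an easy corollary.
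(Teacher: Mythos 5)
Your proof is correct and is exactly the intended argument: the paper gives no proof at all, simply labelling these as ``easy corollaries'' of Lemmas~\ref{lem:prod_bivariate_gamma_nonneg} and~\ref{lem:derivative_gamma_nonneg}, and your derivation (induction on $\ell$ via the derivative lemma for part~(1), and direct substitution into the gamma expansion, equivalently the product lemma, for parts~(2) and~(3)) is the obvious route the authors had in mind.
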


	\begin{lemma}
		\label{lem:more_gamma_positive}
		Let $f(t)$ be gamma positive with $\cnos(f(t)) = n/2$ and with
		$\lngt(f(t))$ being odd.  Then, $f(t)$ is the sum of 
		two gamma positive sequences $p_1(t)$ and $p_2(t)$ with centers 
		of symmetry $(n-1)/2$ and $(n+1)/2$ respectively.  Further,
		both $p_1(t)$ and $p_2(t)$ have even length.
	\end{lemma}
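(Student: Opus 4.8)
The plan is to split the $\Gamma$-expansion of $f(t)$ term by term, routing one piece of each basis vector into $p_1(t)$ and the other into $p_2(t)$. Write $r$ for the smallest index with $a_r>0$; since $\cnos(f(t))=n/2$ forces $\deg f(t)=n-r$, we get $\lngt(f(t))=n-2r$, and the hypothesis that this is odd makes $n$ odd. Expand
$$f(t)=\sum_{i=0}^{(\lngt(f)-1)/2}\gamma_i\, t^{r+i}(1+t)^{\lngt(f)-2i},\qquad \gamma_i\ge 0,\ \gamma_0\ne 0,$$
where $\gamma_0$ is the leading coefficient of $f(t)$. Everything rests on the single identity, for each $i$,
$$t^{r+i}(1+t)^{\lngt(f)-2i}=t^{r+i}(1+t)^{\lngt(f)-1-2i}+t^{r+i+1}(1+t)^{\lngt(f)-1-2i},$$
obtained by peeling off one factor $(1+t)$: the first summand has degree $(n-r)-1-i$ and least term $t^{r+i}$, hence center $(n-1)/2$, while the second has degree $(n-r)-i$ and least term $t^{r+i+1}$, hence center $(n+1)/2$.

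Accordingly I would set
$$p_1(t)=\sum_{i=0}^{(\lngt(f)-1)/2}\gamma_i\, t^{r+i}(1+t)^{\lngt(f)-1-2i},\qquad p_2(t)=\sum_{i=0}^{(\lngt(f)-1)/2}\gamma_i\, t^{r+i+1}(1+t)^{\lngt(f)-1-2i},$$
so that $p_1(t)+p_2(t)=f(t)$ by the identity. It then remains only to recognise $p_1(t)$ and $p_2(t)$ as gamma positive with the advertised data. For $p_1(t)$, its summands are precisely the $\Gamma$-basis of $\PP_{(n-1)/2}(t)$, i.e. of the palindromic polynomials of degree $(n-r)-1$ whose least positive coefficient sits in degree $r$: that basis is $\{\,t^{r+j}(1+t)^{(n-r-1)-r-2j}:0\le j\le \floor{((n-r-1)-r)/2}\,\}$, and oddness of $\lngt(f(t))$ is exactly what makes $\floor{((n-r-1)-r)/2}$ equal the top index $(\lngt(f)-1)/2$ of the sum. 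Since $\gamma_0\ne 0$, $p_1(t)$ has degree exactly $(n-r)-1$ and a nonzero coefficient in degree $r$, so $\lngt(p_1(t))=\lngt(f(t))-1$ is even. Running the identical check against $\PP_{(n+1)/2}(t)$ (degree $n-r$, least positive coefficient in degree $r+1$) disposes of $p_2(t)$ and gives $\lngt(p_2(t))=\lngt(f(t))-1$ likewise.

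There is no genuine obstruction in this argument; the two points worth flagging are, first, that oddness of $\lngt(f(t))$ is used essentially — it is what forces the index set of the split to coincide with an honest $\Gamma$-basis on each side, whereas an even-length $f(t)$ would leave behind a middle monomial $t^{r+i}(1+t)^0$ that cannot be divided between the two centers — and, second, the degenerate case $\lngt(f(t))=1$, where $p_1(t)=\gamma_0 t^r$ and $p_2(t)=\gamma_0 t^{r+1}$ are monomials, vacuously gamma positive, of length $0$, with centers $(n-1)/2=r$ and $(n+1)/2=r+1$ as required. One could dress all of this up in the bivariate language of Section \ref{subsec:gamma_prelims}, but the direct term-by-term split seems the most transparent route.
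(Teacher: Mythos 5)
Your proof is correct. The paper itself gives no argument for Lemma \ref{lem:more_gamma_positive} (it defers to the earlier reference along with the other preliminary lemmas), so there is nothing internal to compare against; but your term-by-term split, peeling one factor of $(1+t)$ off each gamma-basis element $t^{r+i}(1+t)^{\lngt(f)-2i}$ and routing the two pieces to the centers $(n-1)/2$ and $(n+1)/2$, is the standard and essentially forced argument, and you correctly identify both where oddness of $\lngt(f(t))$ is used (every basis element in the expansion carries a positive power of $(1+t)$, and the resulting index sets are exactly full gamma bases on each side) and why $\gamma_0\neq 0$ guarantees the stated even lengths.
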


\section{Type-A Coxeter Groups}

Define $\np(\pi)$=index $i$ such that $\pi(i)=n$ and  define $\pi'$ to be $\pi$ 
restricted to $[n-1]$.  Let
$\SSS_n^i=\{\pi \in \SSS_n: \np(\pi)=i\}$, 
$\AAA_n^i=\{\pi \in \AAA_n: \np(\pi)=i\}$, and
$\SSS_n^i-\AAA_n^i=\{\pi \in \SSS_n-\AAA_n: \np(\pi)=i\}$.  We need Foata's First Fundamental 
Transformation which is a bijection that maps descents to excedances 
(See Lothaire \cite[Section 10.2]{lothaire-combin-words}).

\begin{theorem}[Foata's First Fundamental Transformation] 
\label{thm:Foatafirstfunda}
For all $n \geq 2$, there exist a bijection $\FFT_n:\SSS_n \mapsto \SSS_n$  such that 
$\des(\FFT_n(\pi))= \exc(\pi)$.  Hence, the statistics $\des()$ and $\exc()$ are equidistributed over 
$\SSS_n$ for all positive integers $n$. 
\end{theorem}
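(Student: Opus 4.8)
The plan is to exhibit $\FFT_n$ through the classical ``canonical cycle form'' construction and then extract the descent statistic by a local analysis of adjacent letters. Given $\pi \in \SSS_n$, I would pass to $\pi^{-1}$, write $\pi^{-1}$ as a product of disjoint cycles, rewrite each cycle so that its largest element comes first, list the cycles in increasing order of these leading (hence maximal) elements, and then erase all parentheses and commas; reading the resulting word $w = w_1 w_2 \cdots w_n$ in one-line notation gives the permutation $\FFT_n(\pi) := w$. The pre-composition with inversion is present only to make the target statistic come out as $\exc$ rather than the number of strict non-excedances, and it could be folded into the recipe.

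Next I would check that $\FFT_n$ is a bijection by exhibiting its inverse. In $w$ the leading letter of each cycle-block is the maximum of that block, and because the blocks are listed by increasing maxima, these leading letters are precisely the left-to-right maxima of $w$. Hence from an arbitrary $w \in \SSS_n$ one recovers the blocks by cutting immediately before each left-to-right maximum; turning each block back into a cycle, forming the associated permutation, and inverting it returns $\pi$. So $\FFT_n$ is a bijection of $\SSS_n$.

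The core of the argument is to count $\des(w)$. Fix $i \in [n-1]$. If positions $i$ and $i+1$ lie in the same cycle-block then $w_{i+1} = \pi^{-1}(w_i)$ by definition of the cycle, so $i$ is a descent of $w$ exactly when $\pi^{-1}(w_i) < w_i$. If $i$ is the last position of a block and $i+1$ the first of the next, then $w_{i+1}$ is the maximum of the following cycle, which exceeds the maximum of $w_i$'s cycle and so exceeds $w_i$; thus $i$ is an ascent. Moreover, when $w_i$ is the last letter of its block, $\pi^{-1}(w_i)$ is the maximum of that block, so $\pi^{-1}(w_i) \geq w_i$, and the inequality $\pi^{-1}(w_i) < w_i$ already rules such letters out. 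Assembling these cases, $\des(w) = \big|\{\, v \in [n] : \pi^{-1}(v) < v \,\}\big|$, and the substitution $v \mapsto \pi(v)$, which is a bijection of $[n]$, turns this into $\big|\{\, u \in [n] : u < \pi(u) \,\}\big| = \exc(\pi)$. Summing over all of $\SSS_n$ gives the asserted equidistribution of $\des$ and $\exc$.

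The step I expect to be the main obstacle is the bookkeeping in that last paragraph: one has to handle the block-boundary pairs correctly (verifying they are always ascents), confirm that block-maxima and fixed points never produce spurious descents, and make sure the exact normalisation chosen --- largest element first, cycles in increasing order of maxima, applied to $\pi^{-1}$ --- is the one combination that delivers $\exc(\pi)$ rather than, for instance, the number of deficiencies of $\pi$ or a count off by the number of cycles. Once that normalisation is fixed, each of the remaining verifications is routine.
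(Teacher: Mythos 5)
Your construction is the classical canonical-cycle-form argument and it is correct: applying the transformation to $\pi^{-1}$ (cycles led by their maxima, listed by increasing maxima, parentheses erased) makes the left-to-right maxima recover the blocks, and your case analysis correctly shows $\des(w)=|\{v:\pi^{-1}(v)<v\}|=\exc(\pi)$. The paper does not prove this theorem at all --- it cites it to Lothaire \cite[Section 10.2]{lothaire-combin-words} --- and your argument is essentially the proof given in that cited source, so there is nothing to flag.
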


We use the bijection $\FFT_n$ of Theorem \ref{thm:Foatafirstfunda} to prove the following.  
    
\begin{lemma}
\label{lem:descentinitialexcedancepenultimate}
For all integers $n \geq 2$, the following holds: 
\begin{equation}\label{eqn:equationdescetinitialexcedancepenultimate}
\sum_{\pi \in \SSS_n^1}
t^{\des(\pi)}s^{\asc(\pi)}=
\sum_{\pi \in \SSS_{n}^{n-1}}t^{\exc(\pi)}
s^{\antiexc(\pi)-1}
\end{equation}
\end{lemma}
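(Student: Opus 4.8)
The plan is to peel the entry $n$ off of both sides and thereby reduce the identity \eqref{eqn:equationdescetinitialexcedancepenultimate} to an identity over $\SSS_{n-1}$, to which Theorem \ref{thm:Foatafirstfunda} applies in size $n-1$. Throughout I use that a permutation in $\SSS_n$ has $n-1$ consecutive pairs, so $\des(\pi)+\asc(\pi)=n-1$, and $n$ positions, so $\exc(\pi)+\nexc(\pi)=n$; in particular both sides of \eqref{eqn:equationdescetinitialexcedancepenultimate} are homogeneous of degree $n-1$ in $s,t$, so it is enough to match the $\des$/$\exc$ statistic, the companion exponent of $s$ being then forced.

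First I would analyze the left-hand side. If $\pi \in \SSS_n^1$ then $\pi_1=n$, so position $1$ is always a descent and never an ascent, while the descents and ascents among positions $2,\dots,n-1$ are precisely those of the word obtained by deleting the leading entry $n$. Hence the map $\pi \mapsto \pi'$ is a bijection $\SSS_n^1 \to \SSS_{n-1}$ with $\des(\pi)=1+\des(\pi')$ and $\asc(\pi)=\asc(\pi')$, so $\sum_{\pi \in \SSS_n^1} t^{\des(\pi)}s^{\asc(\pi)} = t\sum_{\rho \in \SSS_{n-1}} t^{\des(\rho)}s^{\asc(\rho)}$.

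Next I would analyze the right-hand side through the same ``delete $n$'' map. If $\sigma \in \SSS_n^{n-1}$ then $\sigma_{n-1}=n$, so $\sigma'$ is the word $\sigma_1\cdots\sigma_{n-2}\sigma_n$, a permutation in $\SSS_{n-1}$, and $\sigma \mapsto \sigma'$ is a bijection $\SSS_n^{n-1}\to\SSS_{n-1}$. The point to verify is that this map lowers the excedance count by exactly $1$: positions $1,\dots,n-2$ contribute the same excedances to $\sigma$ and to $\sigma'$; position $n-1$ is an excedance of $\sigma$ (as $\sigma_{n-1}=n$), whereas the last position of $\sigma'$ holds $\sigma_n\le n-1$ and so is not an excedance; and position $n$ of $\sigma$ holds $\sigma_n<n$, not an excedance either. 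Thus $\exc(\sigma)=\exc(\sigma')+1$ and $\nexc(\sigma)=\nexc(\sigma')$, giving $\sum_{\sigma \in \SSS_n^{n-1}} t^{\exc(\sigma)}s^{\nexc(\sigma)-1} = t\sum_{\tau \in \SSS_{n-1}} t^{\exc(\tau)}s^{\nexc(\tau)-1}$.

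Cancelling the common factor $t$ reduces \eqref{eqn:equationdescetinitialexcedancepenultimate} to $\sum_{\rho \in \SSS_{n-1}} t^{\des(\rho)}s^{\asc(\rho)} = \sum_{\tau \in \SSS_{n-1}} t^{\exc(\tau)}s^{\nexc(\tau)-1}$, which is exactly Theorem \ref{thm:Foatafirstfunda} in size $n-1$: taking $\rho=\FFT_{n-1}(\tau)$ gives $\des(\rho)=\exc(\tau)$, whence $\asc(\rho)=(n-2)-\des(\rho)=(n-2)-\exc(\tau)=\nexc(\tau)-1$, and summing over $\tau\in\SSS_{n-1}$ completes the proof. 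No step is a genuine obstacle; the only delicate part is the bookkeeping of excedances at the two boundary positions $n-1$ and $n$ under $\sigma\mapsto\sigma'$ (mirrored on the left by the forced descent at position $1$), which is precisely what singles out $\SSS_n^1$ and $\SSS_n^{n-1}$ as the matching pair.
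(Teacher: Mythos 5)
Your proof is correct and is essentially the paper's argument: the paper also deletes the entry $n$, applies $\FFT_{n-1}$ to the resulting element of $\SSS_{n-1}$, and prepends $n$ to land in $\SSS_n^1$, packaging your two reductions into a single bijection $f_n:\SSS_n^{n-1}\to\SSS_n^1$ with $\exc(\pi)=\des(f_n(\pi))$ and $\nexc(\pi)=\asc(f_n(\pi))+1$. Your write-up just makes explicit the boundary bookkeeping and the homogeneity argument that the paper leaves as ``easy to see.''
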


\begin{proof}
We give a  bijection  $f_{n} : \SSS_n^{n-1} \mapsto\SSS_n^{1}$ as follows. 
For any $\pi= \pi_1,\pi_2,\ldots, \pi_{n-2},n,\pi_{n}  \in \SSS_n^{n-1}$, 
let $\FFT_{n-1}(\pi') = a_1, a_2, \ldots, a_{n-1} $.  Then, let 
$f_n(\pi)= n,a_1, a_2, \ldots,a_{n-1}$. 
Clearly, $f_n$ is well-defined and is a bijection.  Further, for 
$\pi \in \SSS_n^{n-1}$, it is 
easy to see that $\exc(\pi) = \des(f_n(\pi))$ and hence
that $\nexc(\pi) = \asc(f_n(\pi)) + 1$.
\end{proof}

\begin{lemma}
\label{lem:excedanceallexceptpenultimateandlast}
For all positive integers $n$ and $r$ with $1
\leq r \leq n-2$,
the following holds:
\begin{eqnarray*} 
\sum_{\pi \in \AAA_n^r } t^{\exc(\pi)} s^{\antiexc(\pi)-1}=
\sum_{\pi \in \SSS_n^r-\AAA_n^r} t^{\exc(\pi)} s^{\antiexc(\pi)-1}=
\frac{1}{2}{\sum_{\pi \in \SSS_n^r}} t^{\exc(\pi)} s^{\antiexc(\pi)-1}
\end{eqnarray*}
\end{lemma}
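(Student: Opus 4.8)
The plan is to exhibit a sign-reversing involution on $\SSS_n^r$ that preserves both $\exc$ and $\nexc$, matches each even permutation with an odd one, and thereby simultaneously proves both claimed equalities. The key point is that for $1 \leq r \leq n-2$, the value $n$ sits at position $r$, and positions $r+1, \dots, n$ all contain entries strictly less than $n$; in particular, position $n$ is not the location of $n$, so there is ``room'' to transpose entries near the end without touching the excedance status of position $r$ (which is automatically an excedance since $\pi_r = n > r$).

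First I would identify a pair of coordinates whose swap flips the parity of the permutation but changes neither the excedance set size nor the non-excedance set size. The natural candidate is the transposition of the entries in positions $n-1$ and $n$: write $\pi = \pi_1, \dots, \pi_{n-2}, \pi_{n-1}, \pi_n$ with $\pi_r = n$, and define $\iota(\pi)$ to be the permutation with $\pi_{n-1}$ and $\pi_n$ interchanged. Since $r \leq n-2$, this swap does not move $n$, so $\iota$ is an involution on $\SSS_n^r$. Multiplying by the transposition $(n-1\ n)$ changes the sign, so $\iota$ maps $\AAA_n^r$ bijectively onto $\SSS_n^r - \AAA_n^r$. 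The main thing to verify is that $\exc$ and $\nexc$ are invariant under $\iota$. Position $n$ always satisfies $\pi_n \leq n$, so it is never an excedance; thus only position $n-1$ can change status. After the swap, position $n$ holds $\pi_{n-1}$, still $\leq n$, so still a non-excedance; and position $n-1$ now holds $\pi_n$. So I must check that exactly one of the two positions $n-1$ (before) and $n-1$ (after) is an excedance, i.e. that $\pi_{n-1} > n-1 \iff \pi_n \leq n-1$ fails to distinguish them — which is not generally true, so this naive involution does not directly work.

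The fix, and the step I expect to be the crux, is to use a smarter involution: instead of always swapping positions $n-1$ and $n$, swap them only when doing so preserves the excedance count, and otherwise swap a different canonically chosen pair. Concretely, among positions $r+1, \dots, n$ one looks for the first index $j$ such that positions $j$ and $j+1$ carry entries $\pi_j, \pi_{j+1}$ that are ``balanced'' — meaning transposing them keeps both in non-excedance status or keeps both in excedance status — and transposes that pair; a counting argument (of the type used to prove that the number of even and odd permutations with a fixed number of excedances agree once $n$ is in a non-extremal slot) guarantees such a $j$ exists. Alternatively, and perhaps cleanest, one can induct: use the decomposition $\SSS_n^r = \{$insert $n$ into position $r$ of some $\sigma \in \SSS_{n-1}\}$, note $\exc$ and $\nexc$ of the result are determined by $\exc(\sigma)$ (the value $n$ at position $r$ contributes one excedance and shifts later positions), and invoke the corresponding statement for $\SSS_{n-1}$, namely that even and odd permutations of $[n-1]$ are equinumerous for each value of $\exc$ — which holds for $n-1 \geq 2$ and can itself be shown by a first-ascent/first-descent style involution on $\SSS_{n-1}$. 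Either route reduces the lemma to the single clean fact that on $\SSS_m$ with $m \geq 2$, the even and odd permutations are equidistributed by $\exc$ (equivalently, by Foata's transformation, by $\des$), and the hypothesis $r \leq n-2$ is exactly what makes the insertion map behave uniformly. The second equality in the statement, that each side is half the total, is then immediate from $\SSS_n^r = \AAA_n^r \sqcup (\SSS_n^r - \AAA_n^r)$.
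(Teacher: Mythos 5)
You actually had the correct involution in hand and then talked yourself out of it. The map you call the ``naive'' one --- transposing the entries in positions $n-1$ and $n$ --- is precisely the paper's proof, and it does preserve $\exc$ and $\nexc$. The point you missed is that position $n-1$ is an excedance only when $\pi_{n-1} > n-1$, i.e.\ only when $\pi_{n-1} = n$; since $\np(\pi) = r \leq n-2$, the letter $n$ occupies neither of the last two positions, so $\pi_{n-1}, \pi_n \leq n-1$ and \emph{neither} position $n-1$ nor position $n$ is an excedance, before or after the swap. Hence the swap changes nothing in the excedance statistics, while it is an involution that changes $\inv$ by an odd amount, giving the parity-reversing, $(\exc,\nexc)$-preserving bijection $\AAA_n^r \to \SSS_n^r - \AAA_n^r$; the second equality then follows from the disjoint union, exactly as you say. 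Your worry about ``$\pi_{n-1} > n-1$ versus $\pi_n \leq n-1$'' dissolves once you use the hypothesis $r \leq n-2$, which is there precisely to rule out the only case in which position $n-1$ could be an excedance.

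The replacements you propose do not repair anything and introduce real errors. The ``first balanced adjacent pair'' involution is unsubstantiated: you give no proof that such a pair exists, nor that the choice of pair is stable under the swap (which is needed for the map to be an involution). More seriously, both alternative routes reduce the lemma to the claim that even and odd permutations of $\SSS_m$, $m \geq 2$, are equidistributed by $\exc$ (``equivalently by $\des$''). That claim is false: by Mantaci's theorem (Corollary \ref{cor:mantaci_theorem} of this paper), $\sum_{\pi \in \SSS_m} (-1)^{\inv(\pi)} t^{\exc(\pi)} s^{\nexc(\pi)-1} = (s-t)^{m-1} \neq 0$, and already for $m=2$ the even permutation has $\exc=0$ while the odd one has $\exc=1$. (Foata's map does not transport parity of $\inv$, so the ``equivalently'' is also unjustified.) In addition, the insertion map $\sigma \mapsto \sigma_1,\ldots,\sigma_{r-1},n,\sigma_r,\ldots,\sigma_{n-1}$ does not add exactly one to $\exc$: an entry $\sigma_j$ shifted from position $j$ to position $j+1$ loses its excedance status exactly when $\sigma_j = j+1$, so the excedance count of the image is not a function of $\exc(\sigma)$. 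As written, then, the proposal has a genuine gap; the correct argument is the one you discarded in your second paragraph.
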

    
\begin{proof}
Define $g: \AAA_n^r
\mapsto \SSS_n^r-\AAA_n^r $ by $g(\pi_1,\pi_2,\ldots,\pi_{n-1},\pi_{n})=
\pi_1,\pi_2,\ldots,\pi_{n},\pi_{n-1}$.  Clearly, $\inv(\pi) \not \equiv \inv(g(\pi))$ (mod 2) 
and further $g^2=\id$.  As 
$\np(\pi) \leq n-2$,  we also have $\exc(\pi) = \exc(g(\pi))$, completing the proof.
\end{proof}
    

\begin{lemma}
\label{lem:recurrenceforAEXc_n^+}
For all positive integers $n$, the polynomials
$\GE_n^{+}(s,t)$ and $\GE_n^{-}(s,t)$ satisfy the 
following recurrence.
\begin{eqnarray}
\label{eqn:recurrenceforAexc_neven}
\GE_n^{+}(s,t)& = & 
s\GE_{n-1}^{+}(s,t) +t\GE_{n-1}^{-}(s,t)+ \frac{1}{2}st DA_{n-1}(s,t),\\ 
\label{eqn:recurrenceforAexc_odd}	   	
\GE_n^{-}(s,t)& = & 
s\GE_{n-1}^{-}(s,t)+ t\GE_{n-1}^{+}(s,t)+ \frac{1}{2}st D A_{n-1}(s,t).
\end{eqnarray}
\end{lemma}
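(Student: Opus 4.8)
The plan is to evaluate $\GE_n^{+}(s,t)$ and $\GE_n^{-}(s,t)$ by partitioning $\SSS_n$, and its even and odd halves, according to $\np(\pi)$ (the position of the entry $n$) into three groups: $\np(\pi)=n$, $\np(\pi)=n-1$, and $1\le\np(\pi)\le n-2$. I will show each group contributes exactly one of the three terms on the right-hand side of \eqref{eqn:recurrenceforAexc_neven} and \eqref{eqn:recurrenceforAexc_odd}.

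For $\np(\pi)=n$ the entry $n$ is a fixed point, and deleting it gives bijections $\AAA_n^n\to\AAA_{n-1}$ and $(\SSS_n-\AAA_n)^n\to\SSS_{n-1}-\AAA_{n-1}$; no inversion is removed (so parity is preserved), $\exc$ is preserved, and $\nexc$ drops by one, so the weight $t^{\exc(\pi)}s^{\nexc(\pi)-1}$ picks up a factor $s$, yielding $s\GE_{n-1}^{+}(s,t)$ and $s\GE_{n-1}^{-}(s,t)$. For $\np(\pi)=n-1$, i.e.\ $\pi=\pi_1,\dots,\pi_{n-2},n,\pi_n$, deleting the entry $n$ is a bijection $\SSS_n^{n-1}\to\SSS_{n-1}$ that removes exactly one inversion (the pair of positions $(n-1,n)$), hence flips parity; position $n-1$ becomes the unique new excedance while $\nexc$ is unchanged, so the weight picks up a factor $t$. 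Because parity flips, this group contributes $t\GE_{n-1}^{-}(s,t)$ to $\GE_n^{+}(s,t)$ and $t\GE_{n-1}^{+}(s,t)$ to $\GE_n^{-}(s,t)$.

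For the group $1\le\np(\pi)\le n-2$ I first apply Lemma~\ref{lem:excedanceallexceptpenultimateandlast}: for each such $r$ we have $\sum_{\pi\in\AAA_n^r}t^{\exc(\pi)}s^{\nexc(\pi)-1}=\sum_{\pi\in\SSS_n^r-\AAA_n^r}t^{\exc(\pi)}s^{\nexc(\pi)-1}=\tfrac12\sum_{\pi\in\SSS_n^r}t^{\exc(\pi)}s^{\nexc(\pi)-1}$, so this group contributes $\tfrac12\sum_{\pi\in\SSS_n,\ \np(\pi)\le n-2}t^{\exc(\pi)}s^{\nexc(\pi)-1}$ to \emph{each} of $\GE_n^{\pm}(s,t)$. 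It then remains to prove
\[
\sum_{\pi\in\SSS_n,\ \np(\pi)\le n-2}t^{\exc(\pi)}s^{\nexc(\pi)-1}= st\, D A_{n-1}(s,t),
\]
where $A_{n-1}(s,t)$ denotes the bivariate Eulerian polynomial; since $\des$ and $\exc$ are equidistributed (Theorem~\ref{thm:Foatafirstfunda}) it equals $\GE_{n-1}(s,t)=\sum_{\sigma\in\SSS_{n-1}}t^{\exc(\sigma)}s^{\nexc(\sigma)-1}$. For this I use the bijection $\SSS_n^i\to\SSS_{n-1}$, $1\le i\le n-2$, that swaps the values at positions $i$ and $n$ and then deletes the resulting trailing entry $n$; writing $\widehat\pi$ for the image (so $\widehat\pi_i=\pi_n$), one checks $\exc(\pi)=\exc(\widehat\pi)+[\widehat\pi_i\le i]$ and $\nexc(\pi)=\nexc(\widehat\pi)+[\widehat\pi_i> i]$. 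For a fixed $\widehat\pi$, as $i$ ranges over $\{1,\dots,n-2\}$ exactly $\exc(\widehat\pi)$ values have $\widehat\pi_i>i$ and exactly $\nexc(\widehat\pi)-1$ have $\widehat\pi_i\le i$, because position $n-1$ is always a non-excedance of $\widehat\pi$ and it is precisely the position omitted from the range. Summing $\bigl((\nexc(\widehat\pi)-1)\,t^{\exc(\widehat\pi)+1}s^{\nexc(\widehat\pi)-1}+\exc(\widehat\pi)\,t^{\exc(\widehat\pi)}s^{\nexc(\widehat\pi)}\bigr)$ over $\widehat\pi\in\SSS_{n-1}$ is exactly $st\,(\partial_s+\partial_t)A_{n-1}(s,t)=st\,D A_{n-1}(s,t)$. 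Adding the three contributions gives the two recurrences; the cases $n\le2$, where the third group is empty, are checked directly.

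The hard part is the third group, and within it two points: using the ``swap-then-delete'' bijection rather than naively deleting the entry $n$ — the latter shifts every later position and corrupts the excedance statistic — and the counting step, namely that the positions $i\in\{1,\dots,n-2\}$ which are excedances of $\widehat\pi$ produce the $\partial_t$-part and the remaining positions the $\partial_s$-part of $DA_{n-1}$, which works precisely because position $n-1$ is automatically a non-excedance and is exactly the position removed.
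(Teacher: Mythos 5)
Your proof is correct, and it reaches the two recurrences by the same outer decomposition as the paper --- splitting by $\np(\pi)\in\{n\}$, $\{n-1\}$, $\{1,\dots,n-2\}$, handling the first two groups by deleting the letter $n$ (with the parity flip at position $n-1$), and invoking Lemma~\ref{lem:excedanceallexceptpenultimateandlast} to halve the middle group. Where you genuinely diverge is in identifying the middle group's total with $st\,DA_{n-1}(s,t)$. The paper does this indirectly: it applies Foata's First Fundamental Transformation to $\SSS_n$ and to $\SSS_n^n$, uses Lemma~\ref{lem:descentinitialexcedancepenultimate} to peel off the $\np(\pi)=n-1$ stratum, and subtracts to conclude that the excedance sum over $\np(\pi)\le n-2$ equals the descent sum over permutations with $n$ in neither the first nor the last position, which is the standard descent-insertion quantity $st\,DA_{n-1}(s,t)$. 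You instead prove the identity entirely on the excedance side via the swap-then-delete bijection $\SSS_n^i\to\SSS_{n-1}$ and the observation that position $n-1$ of $\widehat\pi$ is never an excedance, so the $n-2$ insertion slots split as $\exc(\widehat\pi)$ versus $\nexc(\widehat\pi)-1$ and reproduce the two derivative terms of $st\,D A_{n-1}(s,t)$ exactly (I checked the bookkeeping $\exc(\pi)=\exc(\widehat\pi)+[\widehat\pi_i\le i]$, $\nexc(\pi)=\nexc(\widehat\pi)+[\widehat\pi_i>i]$; it is right). Your route is self-contained and makes no use of Theorem~\ref{thm:Foatafirstfunda} or Lemma~\ref{lem:descentinitialexcedancepenultimate}, at the cost of having to verify a slightly delicate insertion bijection for excedances; the paper's route reuses machinery it needs anyway and keeps the excedance-side combinatorics to a minimum. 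One cosmetic caveat: the recurrence as stated for $n=1$ would reference $\GE_0^{\pm}$, so your remark that small $n$ must be checked directly is apt --- in substance the lemma is used only for $n\ge2$.
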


\begin{proof}
Foata's First Fundamental bijection, applied on 
$\SSS_n$ and $\SSS_{n-1}$  gives us: 
\begin{eqnarray}
\label{eqn:equationFoatabijectiononSn}
\sum_{\pi \in \SSS_n }t^{\des(\pi)}s^{\asc(\pi)} & = & \sum_{\pi \in \SSS_n }t^{\exc(\pi)}s^{\antiexc(\pi)-1} \\
\label{eqn:equationFoatabijectiononSn-1}
\sum_{\pi \in \SSS_n^{n}}t^{\des(\pi)} s^{\asc(\pi)} & = & \sum_{\pi \in \SSS_n^{n}}t^{\exc(\pi)} 
s^{\antiexc(\pi)-1}
\end{eqnarray}

    	Subtracting equations
        \eqref{eqn:equationdescetinitialexcedancepenultimate}
    	 and \eqref{eqn:equationFoatabijectiononSn-1} from 
    	 \eqref{eqn:equationFoatabijectiononSn}, we get
    	 \begin{equation}
    	 \label{eqn:requidistribution_over_restricted_Sets}
    	\sum_{r=2}^{n-1} \sum_{\pi \in \SSS_n^r}t^{\des(\pi)}
    	s^{\asc(\pi)}=
    	\sum_{r=1}^{n-2} \sum_{\pi \in \SSS_n^r}t^{\exc(\pi)}
    	s^{\antiexc(\pi)-1}
    	\end{equation}
    	
 We know 
\begin{eqnarray*}
\GE_n^{+}(s,t) & = &\sum_{r=1}^{n-2}\sum_{\pi \in \AAA_n^r }t^{\exc(\pi)}s^{\antiexc(\pi)-1}
    	 + \sum_{\pi \in \AAA_n^{n-1} }t^{\exc(\pi)}s^{\antiexc(\pi)-1}+
    	 \sum_{\pi \in \AAA_n^n }t^{\exc(\pi)}s^{\antiexc(\pi)-1}\\
& = & \frac{1}{2}\sum_{r=1}^{n-2}\sum_{\pi \in \SSS_n^r }t^{\exc(\pi)}s^{\antiexc(\pi)-1}+t\GE_{n-1}^{-}(s,t)+
 s\GE_{n-1}^{+}(s,t) 
\\ 
& = & \frac{1}{2}\sum_{r=2}^{n-1} \sum_{\pi \in \SSS_n^r}t^{\des(\pi)} s^{\asc(\pi)}+  
	t\GE_{n-1}^{-}(s,t)+ s\GE_{n-1}^{+}(s,t)\\
& = & \frac{1}{2} stDA_{n-1}(s,t)+ t\GE_{n-1}^{-}(s,t)+ s\GE_{n-1}^{+}(s,t)
\end{eqnarray*} 
The second line follows by using Lemma \ref{lem:excedanceallexceptpenultimateandlast}. The third line 
follows using \eqref{eqn:requidistribution_over_restricted_Sets}. 
The last line follows using the fact that $stDA_{n-1}(s,t)$ is 
the contribution of all the permutations where $n$ is not in the 
initial or the final position. The proof of 
\eqref{eqn:recurrenceforAexc_odd} is identical and is hence omitted.
\end{proof}

A simple corollary of Lemma \ref{lem:recurrenceforAEXc_n^+} is 
the following Theorem of Mantaci (see \cite{mantaci-jcta-93}). 

\begin{corollary}
\label{cor:mantaci_theorem}
For all positive integers $n\geq 2$, 
$\sum_{\pi \in {\SSS_n} }(-1)^{\inv(\pi)} t^{\exc(\pi)} s ^{\antiexc(\pi)-1}  
=  (s-t)^{n-1}.$
\end{corollary}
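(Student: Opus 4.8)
The plan is to extract the identity directly from the recurrences of Lemma~\ref{lem:recurrenceforAEXc_n^+}, observing that $\GE_n^{+}(s,t) - \GE_n^{-}(s,t)$ is precisely the signed sum in the statement: $(-1)^{\inv(\pi)}$ equals $+1$ when $\pi \in \AAA_n$ and $-1$ when $\pi \in \SSS_n - \AAA_n$, so $\sum_{\pi \in \SSS_n}(-1)^{\inv(\pi)} t^{\exc(\pi)} s^{\antiexc(\pi)-1} = \GE_n^{+}(s,t) - \GE_n^{-}(s,t)$. I will abbreviate this difference by $F_n(s,t)$.

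First I would subtract \eqref{eqn:recurrenceforAexc_odd} from \eqref{eqn:recurrenceforAexc_neven}. The two occurrences of $\tfrac{1}{2}st\, DA_{n-1}(s,t)$ cancel exactly, while the surviving terms regroup as $s\bigl(\GE_{n-1}^{+}(s,t) - \GE_{n-1}^{-}(s,t)\bigr) - t\bigl(\GE_{n-1}^{+}(s,t) - \GE_{n-1}^{-}(s,t)\bigr)$. Hence $F_n(s,t) = (s-t)\, F_{n-1}(s,t)$ for every $n \geq 2$.

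Then I would pin down the base case. For $n = 1$ the group $\SSS_1$ consists only of the identity, which has no inversions, no excedances, and exactly one non-excedance, so $F_1(s,t) = 1$; equivalently one checks $n = 2$ directly ($12$ contributes $s$, $21$ contributes $-t$) and gets $F_2(s,t) = s - t$. Iterating the one-term recurrence from the base case yields $F_n(s,t) = (s-t)^{n-1}$ for all $n \geq 1$, which is the asserted formula.

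There is essentially no obstacle here: this is a two-line consequence of Lemma~\ref{lem:recurrenceforAEXc_n^+}. The only points needing a moment's care are verifying that the auxiliary terms $\tfrac{1}{2}st\, DA_{n-1}(s,t)$ genuinely cancel in the difference (rather than merely combine), and keeping straight which of $\GE^{+}$ and $\GE^{-}$ is multiplied by $s$ and which by $t$, so that the coefficient of $F_{n-1}$ comes out as $s - t$ and not $t - s$.
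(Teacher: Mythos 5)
Your proposal is correct and follows essentially the same route as the paper: the paper likewise subtracts \eqref{eqn:recurrenceforAexc_odd} from \eqref{eqn:recurrenceforAexc_neven}, notes the cancellation of the $\tfrac{1}{2}st\,DA_{n-1}(s,t)$ terms to obtain $\GE_n^{+}(s,t)-\GE_n^{-}(s,t)=(s-t)\bigl(\GE_{n-1}^{+}(s,t)-\GE_{n-1}^{-}(s,t)\bigr)$, and closes the induction with the base case $n=2$ giving $s-t$. No substantive differences.
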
  
\begin{proof}
We induct on $n$ with the base case being $n=2$.
When $n=2$, it is simple to note that
$\sum_{\pi \in {\SSS_2} }(-1)^{\inv(\pi)} t^{\exc(\pi)} s ^{\antiexc(\pi)-1}  
= s-t$.  
Subtracting \eqref{eqn:recurrenceforAexc_odd} from  \eqref{eqn:recurrenceforAexc_neven} we get
\begin{eqnarray}
  \label{eqn:signed_inversion_recurrence}
  \sum_{\pi \in \SSS_n }(-1)^{\inv(\pi)}
  t^{\exc(\pi)}	s^{\antiexc(\pi)-1} & = &\GE_n^{+}(s,t)- \GE_n^{-}(s,t) \nonumber \\ 
  & = & (s-t)\left[\GE_{n-1}^{+}(s,t)- \GE_{n-1}^{-}(s,t)\right] \nonumber 
   =  (s-t)^{n-1}
  \end{eqnarray}
\end{proof}  

Let $\GE_n^+(t) = \sum_{k=0}^{n-1} a_{n,k}^+ t^k$ and 
let $\GE_n^-(t) = \sum_{k=0}^{n-1} a_{n,k}^- t^k$.
The following recurrences for the numbers $a_{n,k}^+$ and 
$a_{n,k}^-$ were proved by Mantaci in \cite{mantaci-jcta-93, mantaci-thesis}.
It is easy to derive them from Lemma \ref{lem:recurrenceforAEXc_n^+} 

\begin{corollary}
\label{cor:recurrencesforexcedancecoefficients}
For all positive integers $n$ with $n \geq 2$,  we have 
\begin{eqnarray*}
a_{n,k}^+ = ka_{n-1,k}^- + (n-k)a_{n-1,k-1}^- + a_{n-1,k}^+
\mbox{ \hspace{1 mm} and \hspace{1 mm}}
a_{n,k}^- = ka_{n-1,k}^+ + (n-k)a_{n-1,k-1}^+ + a_{n-1,k}^-.
\end{eqnarray*}
\end{corollary}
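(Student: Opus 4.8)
The plan is to obtain both recurrences by specializing the bivariate identities of Lemma~\ref{lem:recurrenceforAEXc_n^+} and extracting the coefficient of $t^k$. Write $\GE_{n-1}^{\pm}(s,t) = \sum_j a_{n-1,j}^{\pm}\, t^j s^{n-2-j}$ (a homogeneous polynomial of degree $n-2$), and recall that the polynomial $A_{n-1}(s,t)$ appearing in Lemma~\ref{lem:recurrenceforAEXc_n^+} is, via Theorem~\ref{thm:Foatafirstfunda}, just $\GE_{n-1}(s,t) = \GE_{n-1}^{+}(s,t) + \GE_{n-1}^{-}(s,t)$. So \eqref{eqn:recurrenceforAexc_neven} reads
$$\GE_n^{+}(s,t) = s\,\GE_{n-1}^{+}(s,t) + t\,\GE_{n-1}^{-}(s,t) + \tfrac12\, st\, D\bigl(\GE_{n-1}^{+}(s,t) + \GE_{n-1}^{-}(s,t)\bigr).$$

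The one point that needs to be noticed is that $D$ annihilates the ``signed'' part. By Corollary~\ref{cor:mantaci_theorem} we have $\GE_{n-1}^{+}(s,t) - \GE_{n-1}^{-}(s,t) = (s-t)^{n-2}$, and since $D = \frac{d}{ds}+\frac{d}{dt}$ it is immediate that $D\bigl[(s-t)^{n-2}\bigr] = 0$. Hence $D\GE_{n-1}^{+} = D\GE_{n-1}^{-}$, so $\tfrac12 D(\GE_{n-1}^{+}+\GE_{n-1}^{-}) = D\GE_{n-1}^{-}$, and the displayed recurrence collapses to
$$\GE_n^{+}(s,t) = s\,\GE_{n-1}^{+}(s,t) + t\,\GE_{n-1}^{-}(s,t) + st\, D\,\GE_{n-1}^{-}(s,t),$$
with the analogous identity for $\GE_n^{-}$ obtained from \eqref{eqn:recurrenceforAexc_odd} by interchanging $+$ and $-$.

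Finally I would read off the coefficient of $t^k s^{n-1-k}$ on both sides (equivalently, set $s=1$ and take the coefficient of $t^k$). The summand $s\,\GE_{n-1}^{+}$ contributes $a_{n-1,k}^{+}$; the summand $t\,\GE_{n-1}^{-}$ contributes $a_{n-1,k-1}^{-}$; and writing $st\,D\,\GE_{n-1}^{-} = st\,\partial_t\GE_{n-1}^{-} + st\,\partial_s\GE_{n-1}^{-}$, the first piece contributes $k\,a_{n-1,k}^{-}$ and the second $(n-1-k)\,a_{n-1,k-1}^{-}$. Summing these and combining the two $a_{n-1,k-1}^{-}$ terms gives $a_{n,k}^{+} = a_{n-1,k}^{+} + k\,a_{n-1,k}^{-} + (n-k)\,a_{n-1,k-1}^{-}$, and the computation for $a_{n,k}^{-}$ is identical with $+$ and $-$ swapped. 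There is no genuine obstacle here: the whole content is the cancellation $D\bigl[(s-t)^{n-2}\bigr]=0$, which converts the average $\tfrac12 DA_{n-1}$ into the single term $D\GE_{n-1}^{-}$; everything else is routine extraction of coefficients from homogeneous polynomials. (Alternatively one could argue combinatorially: build $\pi\in\SSS_n$ from $\pi'\in\SSS_{n-1}$ either by setting $\pi(n)=n$, which preserves $\exc$ and the parity of $\inv$, or by picking $i\in[n-1]$ and setting $\pi(i)=n$, $\pi(n)=\pi'(i)$, which raises $\exc$ by $1$ exactly when $i$ is a non-excedance of $\pi'$ and always flips the parity of $\inv$; summing over the two cases yields the recurrences directly.)
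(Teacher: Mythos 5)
Your derivation is correct and is essentially the route the paper intends: the paper offers no details beyond saying the corollary ``is easy to derive'' from Lemma~\ref{lem:recurrenceforAEXc_n^+}, and you supply exactly the missing coefficient extraction from the bivariate recurrence. The one extra ingredient you need, $D\GE_{n-1}^{+}=D\GE_{n-1}^{-}=\tfrac12 DA_{n-1}$, is the paper's Lemma~\ref{lem:excedanceplusequalshalfexcedance} (proved there by induction on $n$); your one-line derivation of it from $\GE_{n-1}^{+}-\GE_{n-1}^{-}=(s-t)^{n-2}$ via Corollary~\ref{cor:mantaci_theorem} is a clean equivalent.
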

	
We move on to palindromicity of the polynomials 
$\GE_n^{+}(s,t)$ and $\GE_n^{-}(s,t)$.

\begin{lemma}
\label{lem:palindromic} 
For all natural numbers $n$, $\GE_n^{+}(s,t)$ and $\GE_n^{-}(s,t)$ are 
palindromic if and only if  $n \equiv 1 \mod2$.
\end{lemma}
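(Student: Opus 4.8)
The plan is to prove both directions by working with the homogeneous bivariate polynomials $\GE_n^{+}(s,t)$ and $\GE_n^{-}(s,t)$ and tracking how the recurrence of Lemma~\ref{lem:recurrenceforAEXc_n^+} interacts with palindromicity. Recall that a homogeneous bivariate polynomial $f(s,t)=\sum_i a_i s^{n-i} t^i$ with $a_0 \neq 0$ is palindromic exactly when $a_i = a_{n-i}$ for all $i$; equivalently, $f(s,t) = f(t,s)$. So the entire lemma is the statement that $\GE_n^{\pm}(s,t)$ is symmetric in $s$ and $t$ if and only if $n$ is odd. I would first dispose of small cases directly (e.g. $n=1,2,3$) to anchor an induction, and then handle the two implications.

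For the ``if'' direction, I would induct on odd $n$. The key observation is that the recurrences
\[
\GE_n^{+}(s,t) = s\,\GE_{n-1}^{+}(s,t) + t\,\GE_{n-1}^{-}(s,t) + \tfrac12 st\, D A_{n-1}(s,t),
\]
\[
\GE_n^{-}(s,t) = s\,\GE_{n-1}^{-}(s,t) + t\,\GE_{n-1}^{+}(s,t) + \tfrac12 st\, D A_{n-1}(s,t),
\]
when written for $n$ and combined with the same recurrences at level $n-1$ (which expand $\GE_{n-1}^{\pm}$ in terms of $\GE_{n-2}^{\pm}$), let us express $\GE_n^{\pm}(s,t)$ in terms of $\GE_{n-2}^{\pm}(s,t)$ and derivative terms coming from the symmetric Eulerian polynomials $A_{n-2}(s,t)$ and $A_{n-1}(s,t)$. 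Concretely, substituting the $n-1$ recurrence into the $n$ recurrence gives
\[
\GE_n^{+}(s,t) = (s^2+t^2)\GE_{n-2}^{+}(s,t) + 2st\,\GE_{n-2}^{-}(s,t) + (s+t)\tfrac12 st\, D A_{n-2}(s,t) + \tfrac12 st\, D A_{n-1}(s,t),
\]
and similarly for $\GE_n^{-}$ with the roles of $+$ and $-$ swapped in the first two terms. Now $s^2+t^2$, $st$, and $s+t$ are symmetric in $s,t$; the homogeneous Eulerian polynomial $A_m(s,t)=\sum_{\pi\in\SSS_m}t^{\des(\pi)}s^{\asc(\pi)}$ is palindromic (this is classical, and $D$ preserves symmetry since $D$ is symmetric in $s,t$); and by the induction hypothesis $\GE_{n-2}^{\pm}(s,t)$ are symmetric since $n-2$ is odd. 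Hence each summand on the right is symmetric in $s,t$, so $\GE_n^{\pm}(s,t)$ is symmetric, i.e. palindromic. One should check the center-of-symmetry / degree bookkeeping is consistent (both $\GE_{n-2}^{\pm}$ have center $(n-3)/2$, multiplying by $s^2+t^2$ or $st$ raises degree by $2$ giving center $(n-1)/2$; the derivative terms also land at center $(n-1)/2$), but this is routine given Section~\ref{subsec:gamma_prelims}.

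For the ``only if'' direction — which I expect to be the main obstacle, since it requires exhibiting genuine asymmetry rather than propagating it — I would argue by contradiction, or more cleanly by a direct parity/evaluation argument. The cleanest route uses Corollary~\ref{cor:mantaci_theorem}: $\GE_n^{+}(s,t) - \GE_n^{-}(s,t) = (s-t)^{n-1}$. If $n$ is even, then $(s-t)^{n-1}$ is an \emph{odd} degree form in $(s-t)$, hence antisymmetric: swapping $s \leftrightarrow t$ sends it to $(t-s)^{n-1} = -(s-t)^{n-1}$. If both $\GE_n^{+}(s,t)$ and $\GE_n^{-}(s,t)$ were palindromic (symmetric), their difference would be symmetric, forcing $(s-t)^{n-1} = -(s-t)^{n-1}$, i.e. $(s-t)^{n-1}=0$, which is false. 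Therefore at least one of $\GE_n^{+}(s,t)$, $\GE_n^{-}(s,t)$ fails to be palindromic when $n$ is even. To get that \emph{neither} is palindromic (which the lemma claims), note that $\GE_n^{+}(s,t) + \GE_n^{-}(s,t) = \GE_n(s,t) = A_n(s,t)$ is symmetric (palindromic); so $\GE_n^{+}$ is symmetric if and only if $\GE_n^{-}$ is symmetric. Combined with the previous paragraph, for even $n$ they are \emph{both} non-palindromic. This closes the equivalence, with the Mantaci identity doing the real work for the forward direction and the sum identity $A_n = \GE_n^+ + \GE_n^-$ promoting ``one fails'' to ``both fail.''
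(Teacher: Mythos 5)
Your proof is correct, and its ``only if'' half is essentially the paper's argument; the difference is in how you handle the ``if'' half. The paper observes that the two identities you already have in hand --- $\GE_n^{+}(s,t)+\GE_n^{-}(s,t)=A_n(s,t)$ and, from Corollary~\ref{cor:mantaci_theorem}, $\GE_n^{+}(s,t)-\GE_n^{-}(s,t)=(s-t)^{n-1}$ --- solve to give the closed forms
\[
\GE_n^{\pm}(s,t)=\tfrac12\left(A_n(s,t)\pm(s-t)^{n-1}\right),
\]
after which \emph{both} directions drop out at once: $A_n(s,t)$ is symmetric in $s,t$ for all $n$, while $(s-t)^{n-1}$ is symmetric precisely when $n$ is odd (and antisymmetric when $n$ is even, which is exactly the swap argument you make). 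Your two-step inductive expansion of the recurrence for the ``if'' direction is valid --- each of $(s^2+t^2)$, $2st$, $(s+t)\tfrac12 st\,DA_{n-2}$ and $\tfrac12 st\,DA_{n-1}$ is symmetric, so symmetry propagates from $n-2$ to $n$ --- but it is more work than necessary given that you invoke the Mantaci identity anyway for the converse; the closed form buys you the forward direction for free. One small point that both your argument and the paper's elide: for $\GE_n^{-}(s,t)$ the constant coefficient $a_0$ vanishes, so the identification of ``palindromic'' with ``$s\leftrightarrow t$ symmetric'' requires that the center of symmetry be the midpoint $(n-1)/2$ of the homogeneous degree; since that is precisely the center asserted in Theorem~\ref{thm:maintheoremforexccedance}, this is the intended reading, but your opening sentence states the equivalence only for $a_0\neq 0$, which does not literally cover $\GE_n^{-}$.
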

\begin{proof} 
By Corollary \ref{cor:mantaci_theorem}, 
$$\GE_n^{+}(s,t) =  \frac{1}{2} \left( 
A_{n}(s,t)+ (s-t)^{n-1} \right) \mbox{ \hspace{2 mm} and \hspace{2 mm} }
\GE_n^{-}(s,t) =  \frac{1}{2} \left( 
A_{n}(s,t)- (s-t)^{n-1} \right).$$
  	
Clearly, $A_n(s,t)$ is palindromic for all $n$ while $(s-t)^{n-1}$ 
is palindromic if and only if $n$ is odd. 
Hence, $\GE_n^{+}(s,t)$ and $\GE_n^{-}(s,t)$ 
are palindromic iff  $n \equiv 1 \mod2$. 
\end{proof}

\begin{lemma}\label{lem:excedanceplusequalshalfexcedance}
For all positive integers $n \geq 2$,	
$D \GE_{n}^{+}(s,t)=D\GE_{n}^{-}(s,t)=
    	\frac{1}{2} D A_{n}(s,t).$
\end{lemma}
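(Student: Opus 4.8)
The plan is to feed the closed forms for $\GE_n^{+}(s,t)$ and $\GE_n^{-}(s,t)$ that were already extracted in the proof of Lemma~\ref{lem:palindromic} from Corollary~\ref{cor:mantaci_theorem}, into the operator $D$. Recall those read
$$\GE_n^{+}(s,t) = \tfrac{1}{2}\bigl(A_n(s,t) + (s-t)^{n-1}\bigr), \qquad \GE_n^{-}(s,t) = \tfrac{1}{2}\bigl(A_n(s,t) - (s-t)^{n-1}\bigr).$$
Since $D$ is linear, applying it to both sides reduces the entire statement to a single computation: the value of $D$ on $(s-t)^{n-1}$.

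The key point is that $D$ annihilates every power of $s-t$. Indeed, for $k \ge 1$ the chain rule gives
$$D\bigl((s-t)^{k}\bigr) = k(s-t)^{k-1}\,\Bigl(\tfrac{\partial (s-t)}{\partial s} + \tfrac{\partial (s-t)}{\partial t}\Bigr) = k(s-t)^{k-1}\bigl(1 + (-1)\bigr) = 0,$$
and for $k=0$ the value is trivially $0$; hence $D(s-t)^{n-1}=0$ for all $n\ge 1$. Substituting into the two closed forms yields
$$D\GE_n^{+}(s,t) = \tfrac{1}{2}DA_n(s,t) + \tfrac{1}{2}D(s-t)^{n-1} = \tfrac{1}{2}DA_n(s,t),$$
and identically $D\GE_n^{-}(s,t) = \tfrac{1}{2}DA_n(s,t)$, which is exactly the claim.

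There is essentially no obstacle: the content is carried entirely by Corollary~\ref{cor:mantaci_theorem}, and the rest is the one-line observation that $D$ kills $(s-t)^{n-1}$. If one wished to avoid quoting the closed forms, the same conclusion follows by induction on $n$ directly from Lemma~\ref{lem:recurrenceforAEXc_n^+}: applying $D$ to \eqref{eqn:recurrenceforAexc_neven} and \eqref{eqn:recurrenceforAexc_odd}, using the product rule $D(fg)=(Df)g+f(Dg)$, the inductive hypothesis $D\GE_{n-1}^{+}=D\GE_{n-1}^{-}=\tfrac12 DA_{n-1}$, and the standard homogeneous Eulerian recurrence $A_n(s,t)=(s+t)A_{n-1}(s,t)+st\,DA_{n-1}(s,t)$, one checks that both $D\GE_n^{+}$ and $D\GE_n^{-}$ collapse to $\tfrac12 DA_n(s,t)$. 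I would present the closed-form route, as it is the shortest.
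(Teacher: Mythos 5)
Your proof is correct, but it takes a different route from the paper. The paper proves this lemma by induction on $n$ using the recurrence of Lemma~\ref{lem:recurrenceforAEXc_n^+}: applying $D$ to \eqref{eqn:recurrenceforAexc_neven}, expanding by the product rule, and then using the inductive hypothesis $D\GE_{n-1}^{+}=D\GE_{n-1}^{-}$ to swap the two derivative terms, it exhibits $D\GE_n^{+}(s,t)$ as exactly the $D$-image of the right-hand side of \eqref{eqn:recurrenceforAexc_odd}, whence $D\GE_n^{+}=D\GE_n^{-}$; combined with $\GE_n^{+}+\GE_n^{-}=A_n$ this gives the claim. You instead quote the closed forms $\GE_n^{\pm}(s,t)=\tfrac12\left(A_n(s,t)\pm(s-t)^{n-1}\right)$ --- which the paper has already derived from Corollary~\ref{cor:mantaci_theorem} in the proof of Lemma~\ref{lem:palindromic}, so there is no circularity --- and observe that $D$ annihilates every power of $s-t$. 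Your computation $D\left((s-t)^k\right)=k(s-t)^{k-1}(1-1)=0$ is correct, and the conclusion follows immediately by linearity. Your route is shorter and more conceptual: it isolates the real content of the lemma as ``Mantaci's identity plus the fact that $(s-t)^{n-1}$ lies in the kernel of $D$,'' whereas the paper's induction is self-contained at the level of the recurrence but hides this structural reason. Your sketched inductive alternative is essentially the paper's argument (the paper does not even need the Eulerian recurrence for $A_n(s,t)$ that you invoke; it only needs $\GE_n^{+}+\GE_n^{-}=A_n$). Either version is acceptable; presenting the closed-form route, as you propose, is a genuine simplification.
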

\begin{proof}
We induct on $n$.  Clearly, when $n=2$, 
$\GE_{2}^{+}(s,t) = s$, $\GE_{2}^{-}(s,t)=t$ while $A_2(s,t) = s + t$.
Thus, $D\GE_{2}^{+}(s,t)=D\GE_{2}^{-}(s,t)=1=\frac{1}{2}DA_{2}(s,t.$ 
Assume, $D
\GE_{n-1}^{+}(s,t)=D\GE_{n-1}^{-}(s,t)=
\frac{1}{2}DA_{n-1}(s,t).$  Further,
    	 
\begin{eqnarray*}
D\GE_{n}^{+}(s,t) 
	& = &	D \left(s\GE_{n-1}^{+}(s,t) +t\GE_{n-1}^{-}(s,t)+ \frac{1}{2} stDA_{n-1}(s,t)\right) \\
    	& = & sD\GE_{n-1}^{+}(s,t)+ \GE_{n-1}^{+}(s,t) + tD\GE_{n-1}^{-}(s,t)+ \GE_{n-1}^{-}(s,t) \\
    	&  &  + \frac{1}{2} D (stDA_{n-1}(s,t)) \\
    	& = & sD\GE_{n-1}^{-}(s,t)+ \GE_{n-1}^{+}(s,t) + tD\GE_{n-1}^{+}(s,t)+ \GE_{n-1}^{-}(s,t) \\
    	&  & + \frac{1}{2} D (st DA_{n-1}(s,t) \\
    	& = & D \left(s\GE_{n-1}^{-}(s,t) +t\GE_{n-1}^{+}(s,t)+ \frac{1}{2} stDA_{n-1}(s,t)\right) = D\GE_{n}^{-}(s,t)
    	\end{eqnarray*}
The proof is complete. 
\end{proof}

We use the recurrences \eqref{eqn:recurrenceforAexc_neven} and 
\eqref{eqn:recurrenceforAexc_odd}
four times to get the following.
Since this is the main idea of the proof, we omit some details.

\begin{lemma}
\label{lem:induct_by_4}
For all positive integers $n$, we have
\begin{eqnarray}
\GE_{n+4}^+(s,t) 
	& = & L_1(s,t) \GE_{n}^+(s,t) + L_2(s,t) \GE_{n}^-(s,t) + L_3(s,t) D \GE_{n}^+(s,t) \nonumber \\ 
	& & + \left[ L_4(s,t) D^2  + L_5(s,t) D^3 + L_6(s,t) D^4 \right] \GE_{n}^{+}(s,t) \label{eqn:n+4_jump}  \\
\GE_{n+4}^-(s,t) & = & L_1(s,t) \GE_{n}^-(s,t) + L_2(s,t) \GE_{n}^+(s,t) + L_3(s,t) D \GE_{n}^-(s,t) \nonumber \\ 
	& & + \left[ L_4(s,t) D^2 + L_5(s,t) D^3 + L_6(s,t) D^4 \right] \GE_{n}^{-}(s,t)  \label{eqn:n+4_jump2} 
\end{eqnarray}
where the $L_i(s,t)$ with their centers of symmetry are as follows.
	
$
\begin{array}{l|r}
f(s,t) & \cnos(f(s,t)) \\ \hline
L_1(s,t) = (s+t)^4+7st(s+t)^2+16(st)^2 &  2 \\ \hline
L_2(s,t) = 15st(s+t)^2  &  2 \\ \hline 
L_3(s,t) = 3(5s^2+30st+5t^2)st(s+t) &  5/ 2 \\ \hline
L_4(s,t) = 25(st)^2(s+t)^2+20(st)^3&  3\\ \hline
L_5(s,t) = 10(st)^{3}(s+t) & 7/2 \\ \hline
L_6(s,t) = (st)^4 & 4 
\end{array}
$
\end{lemma}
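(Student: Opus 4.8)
The plan is to iterate the pair \eqref{eqn:recurrenceforAexc_neven}--\eqref{eqn:recurrenceforAexc_odd} four times, but to do so after passing to the sum and difference of the two polynomials, which decouples the recursion and makes the ``four times'' transparent. I would set $u_n(s,t) = \GE_n^+(s,t) + \GE_n^-(s,t)$ and $w_n(s,t) = \GE_n^+(s,t) - \GE_n^-(s,t)$, so that $\GE_n^{\pm} = \tfrac12(u_n \pm w_n)$; here $u_n = A_n(s,t)$ (immediate from the proof of Lemma~\ref{lem:palindromic}) and $w_n = (s-t)^{n-1}$ by Corollary~\ref{cor:mantaci_theorem}, though only the recurrences these satisfy will be used. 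Adding \eqref{eqn:recurrenceforAexc_neven} and \eqref{eqn:recurrenceforAexc_odd} and using $A_{n-1}=u_{n-1}$ gives $u_n = \bigl((s+t)+stD\bigr)u_{n-1}$, while subtracting them cancels the two copies of $\tfrac12 st\,DA_{n-1}$ and leaves $w_n = (s-t)w_{n-1}$. Iterating four times,
\begin{equation*}
\GE_{n+4}^{\pm} \;=\; \tfrac12\Bigl(\bigl((s+t)+stD\bigr)^4 u_n \;\pm\; (s-t)^4\, w_n\Bigr).
\end{equation*}
Equivalently this is the diagonalization of the operator matrix $\left(\begin{smallmatrix} s+\frac{st}{2}D & t+\frac{st}{2}D \\ t+\frac{st}{2}D & s+\frac{st}{2}D\end{smallmatrix}\right)$ sending $(\GE_{n-1}^+,\GE_{n-1}^-)$ to $(\GE_n^+,\GE_n^-)$: its eigenvector matrix $\left(\begin{smallmatrix}1&1\\1&-1\end{smallmatrix}\right)$ has constant entries, so the diagonalization is unaffected by the fact that $D$ does not commute with multiplication.

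The one real computation is to normal-order $\bigl((s+t)+stD\bigr)^4$, that is, to push all derivatives to the right using $D\circ(g\,\cdot) = (g\,\cdot)\circ D + (Dg)\,\cdot$ together with $D(s+t)=2$ and $D(st)=s+t$. Carrying this out one factor at a time, I expect to obtain
\begin{equation*}
\bigl((s+t)+stD\bigr)^4 \;=\; p_0 + p_1 D + p_2 D^2 + p_3 D^3 + p_4 D^4,
\end{equation*}
with $p_0 = (s+t)^4 + 22st(s+t)^2 + 16(st)^2$, $\ p_1 = 15st(s+t)^3 + 60(st)^2(s+t)$, $\ p_2 = 25(st)^2(s+t)^2 + 20(st)^3$, $\ p_3 = 10(st)^3(s+t)$, $\ p_4 = (st)^4$. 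This bookkeeping of the commutator terms through four multiplications is the step most prone to slips; I would sanity-check it against the classical identity $A_{n+1} = \bigl((s+t)+stD\bigr)A_n$, whose fourfold iteration applied to $A_1=1$ must return $A_5(s,t)=s^4+26s^3t+66s^2t^2+26st^3+t^4$.

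Finally I would substitute $u_n = \GE_n^+ + \GE_n^-$ and $w_n = \GE_n^+ - \GE_n^-$ back in. Since $D(s-t)=0$ we have $D w_n = 0$, hence $D^i\GE_n^+ = D^i\GE_n^-$ for every $i\ge 1$ (this is exactly Lemma~\ref{lem:excedanceplusequalshalfexcedance}), so $D^i u_n = 2D^i\GE_n^+$ for $i\ge1$ while $u_n = \GE_n^+ + \GE_n^-$ for $i=0$. Substituting,
\begin{equation*}
\GE_{n+4}^{+} \;=\; \tfrac12\bigl(p_0 + (s-t)^4\bigr)\GE_n^+ + \tfrac12\bigl(p_0 - (s-t)^4\bigr)\GE_n^- + p_1 D\GE_n^+ + p_2 D^2\GE_n^+ + p_3 D^3\GE_n^+ + p_4 D^4\GE_n^+,
\end{equation*}
and the formula for $\GE_{n+4}^-$ follows by interchanging $\GE_n^+\leftrightarrow\GE_n^-$ (the symmetry $u_n\mapsto u_n$, $w_n\mapsto -w_n$). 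Using $(s-t)^4 = (s+t)^4 - 8st(s+t)^2 + 16(st)^2$ one checks $\tfrac12(p_0+(s-t)^4) = (s+t)^4 + 7st(s+t)^2 + 16(st)^2 = L_1$, $\ \tfrac12(p_0-(s-t)^4) = 15st(s+t)^2 = L_2$, and $p_1=L_3$, $p_2=L_4$, $p_3=L_5$, $p_4=L_6$, which is precisely \eqref{eqn:n+4_jump}--\eqref{eqn:n+4_jump2}; the listed centers of symmetry are then immediate from the explicit forms of the $L_i$ (for instance $L_3|_{s=1} = 15t+105t^2+105t^3+15t^4$ has center $5/2$). In short, the hard part is purely the non-commutative expansion in the middle step; everything around it is formal.
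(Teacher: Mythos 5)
Your argument is correct, and all the computations check out: the normal-ordered coefficients $p_0=(s+t)^4+22st(s+t)^2+16(st)^2$, $p_1=15st(s+t)^3+60(st)^2(s+t)$, $p_2$, $p_3$, $p_4$ are right (I verified $M^2=a^2+2b+3abD+b^2D^2$, $M^3=a^3+8ab+(7a^2b+8b^2)D+6ab^2D^2+b^3D^3$ with $a=s+t$, $b=st$, and then $M^4$), and the recombination $\tfrac12(p_0\pm(s-t)^4)=L_1,L_2$ and $p_i=L_{i+2}$ matches the table. The paper itself gives no details beyond ``apply \eqref{eqn:recurrenceforAexc_neven}--\eqref{eqn:recurrenceforAexc_odd} four times,'' i.e.\ iterate the coupled $2\times2$ system directly; your decoupling into $u_n=\GE_n^++\GE_n^-=A_n(s,t)$ and $w_n=\GE_n^+-\GE_n^-=(s-t)^{n-1}$ is a genuinely cleaner organization of the same iteration. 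What it buys is that the only nontrivial work is a single normal-ordering of $((s+t)+stD)^4$, which comes with a built-in sanity check against $A_5(s,t)$, whereas the direct route forces one to track commutator corrections in both slots of the matrix through four steps; your observation that the constant eigenvector matrix commutes with $D$ is the right justification for why the diagonalization survives the non-commutativity. The only point worth making explicit is that $D^i\GE_n^+=D^i\GE_n^-$ for $i\ge1$ follows from applying $D^{i-1}$ to Lemma \ref{lem:excedanceplusequalshalfexcedance} (equivalently from $D(s-t)^{n-1}=0$), which you do note; with that, the proof is complete.
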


\begin{proof} (Of Theorem \ref{thm:maintheoremforexccedance})
When $n=5$  and $n=7$ one can check that 
\begin{eqnarray*}
\GE_{5}^{+}(s,t) & = & s^4+11s^3t+36s^2t^2+11st^3+t^4  
=  (s+t)^4+7st(s+t)^2+16(st)^2, \\
\GE_{5}^{-}(s,t) & = &15s^3t+30s^2t^2+15st^3
= 15st(s+t)^2, \label{eqn:m+4_jump}\\
\GE_{7}^{+}(s,t) & = &  s^6+57s^5t+603s^4t^2+
1198s^3t^3+603s^2t^4+57st^5+t^6 \\ 
& = &(s+t)^6+51st(s+t)^4+384(st)(s+t)^2+104(st)^3 , \\ 
\GE_{7}^{-}(s,t) & = & 63s^5t+588s^4t^2+1218s^3t^3+
588s^2t^4+63st^5 , \\
& = & 63st(s+t)^4+336(st)^2(s+t)^2+168(st)^3. 
\end{eqnarray*}
Let $n$ be odd with $n >7$ and let $m = n-4$.  By induction, 
both $\GE_m^{+}(s,t)$ and $\GE_m^{-}(s,t)$ are 
gamma positive with centers of symmetry $\frac{1}{2}(m-1) $.
It is easy to check that all $L_i(s,t)$'s for all $1 \leq i \leq 6$
that appear in Lemma \ref{lem:induct_by_4} are gamma positive. 
Further, each of the six terms in \eqref{eqn:n+4_jump} has the same 
center of symmetry $\frac{1}{2}(m+3)$.  Thus by
Lemma \ref{lem:induct_by_4}, $\GE_n^+(s,t)$ is gamma positive.
In an identical manner, one can prove that 
$\GE_{n}^-(s,t)$ is gamma positive with 
center of symmetry $\frac{1}{2} (n-1)$, completing the proof.
\end{proof}

\subsection{When $n \equiv 0$  (mod 2)} 
When $n=2m$ for a natural number $m$, by Lemma \ref{lem:palindromic}, 
the polynomials $\GE_{n}^+(t)$ and $\GE_{n}^-(t)$ are not even palindromic.

\begin{proof} (Of Theorem \ref{thm:sumof2forevenAtype})
We induct on $n$.  Our base case is when $n=2m=4$, one can 
check that $\GE_4^{+}(t)= 1+4t+7t^2=(1+4t+t^2)+6t^2$ and $\GE_4^-(t)=
t^3+4t^2+7t=t(1+4t+t^2)+6t$.  
By Lemma \ref{lem:recurrenceforAEXc_n^+}, we have
\begin{eqnarray}
\GE_{2m+2}^+(s,t)  & =  & s \GE_{2m+1}^+(s,t) + t \GE_{2m+1}^-(s,t) 
+ st D \GE_{2m+1}^+(s,t) \nonumber \\
\label{eqn:n_even_pi_even_recurrence_univariate}
\GE_{2m+2}^+(t)  & =  &  \GE_{2m+1}^+(t) + t \GE_{2m+1}^-(t) 
+ \left( st D \GE_{2m+1}^+(s,t) \right)|_{s=1} 
\end{eqnarray}
			
The polynomial $p(s,t) = st D \GE_{2m+1}(s,t)$ is gamma positive with 
$\cnos(p(s,t)) = m-1/2+1$.  Hence, $p(t) = p(s,t)|_{s=1}$ is univariate 
gamma positive polynomial with  $\cnos(p(t)) = m+1/2$.  Further, it is 
simple to see that $p(t)$ has odd length and thus by Lemma 
\ref{lem:more_gamma_positive},  it can be written as $p(t) = p_1(t) + p_2(t)$ 
with $\cnos(p_1(t)) = m$ and $\cnos(p_2(t)) = m+1$.  Thus, 
\eqref{eqn:n_even_pi_even_recurrence_univariate}  becomes
\begin{eqnarray}
\GE_{2m+2}^+(t) &=&  \GE_{2m+1}^+(t) + t \GE_{2m+1}^-(t)
+ p_1(t) + p_2(t) \\
&=  & w_1(t) + w_2(t) 
\end{eqnarray}
where $w_1(t) = \GE_{2m+1}^+(t) + p_1(t)$ has $\cnos(w_1(t)) = m$ 
and $w_2(t) = t\GE_{2m+1}^-(t) + p_2(t)$ has $\cnos(w_2(t)) = m+1$.
The proof is complete.
\end{proof}

\section{Type B Coxeter Group}
    
Let $\BB_n$ be the set of permutations $\pi$ of $\{-n, -(n-1),
\ldots, -1, 1, 2, \ldots n\}$ 
satisfying $\pi(-i) = -\pi(i)$.  We denote $-k$ as $\ob{k}$ as well.
$\BB_n$ is referred to as the hyperoctahedral group or the group of 
signed permutations on $[n]$ and $|\BB_n| = 2^n n!$. 
For $\pi \in \BB_n$ we alternatively denote $\pi(i)$ as $\pi_i$. 
For $\pi \in \BB_n$, define $\Negs(\pi) = 
    \{\pi_i : i > 0, \pi_i < 0 \}$ be 
    the set of elements which occur with a negative sign.  
    Define $\inv_B(\pi)=  | \{ 1 \leq i < j \leq n : \pi_i > \pi_j \} |
    + \sum_{i \in \Negs(\pi)} i$ and
let $\BB^+_n \subseteq \BB_n$ denote the subset of 
elements having even $\inv_B()$ value and let $\BB_n^- = \BB_n - \BB_n^+$.

We follow Brenti's\cite{brenti-q-eulerian-94} definition of excedance: 
$\exc_B(\pi)= |\{ i \in [n]: \pi_{|\pi(i)|} > \pi_ i\}|+|\{ i \in [n]: \pi_i =- i\}|$ and
    $\antiexc_B(\pi)=n-\exc_B(\pi)$. 
    For all the permutations $\pi \in \BB_n $, let $ \pi_0=0$. 
    Define $\des_B(\pi)= 
    |\{ i \in [0,1,2\ldots ,n-1]: \pi_i > \pi_{i+1}\}| $ and 
    $\asc_B(\pi)= |\{ i \in [0,1,2\ldots ,n-1]: \pi_i <  \pi_{i+1}\}|$. 
    	Define

\begin{eqnarray}
\label{eqn:Bn} 
\EE_n(t) & = & \sum_{\pi \in \BB_n} t^{\exc_B(\pi)} 
	\mbox{\hspace{3 mm} and \hspace {3 mm} }
\EE_n(s,t)  =  \sum_{\pi \in \BB_n}t^{\exc_B(\pi)}s^{\antiexc_B(\pi)} \\ 
\label{eqn:descentBtypebivariate}
B_n(t) & = & \sum_{\pi \in \BB_n} t^{\des_B(\pi)}
	\mbox{\hspace{3 mm} and \hspace {3 mm} }
B_n(s,t)  =  \sum_{\pi \in \BB_n} t^{\des_B(\pi)} s^{\asc_B(\pi)}    \\  
\label{eqn:descentBtypeeven}
B_n^+(s,t) & = & \sum_{\pi \in \BB_n^+} t^{\des_B(\pi)} s^{\asc_B(\pi)}
\mbox{\hspace{3 mm} and \hspace {3 mm} } 
B_n^-(s,t)  =  \sum_{\pi \in \BB_n^-} t^{\des_B(\pi)} s^{\asc_B(\pi)} \\
\label{eqn:BAn} 
\EE_n^+(t) & = & \sum_{\pi \in \BB_n^+} t^{\exc_B(\pi)}  
\mbox{\hspace{3 mm} and \hspace {3 mm} }
\EE_n^+(s,t)  = \sum_{\pi \in \BB_n^+} t^{\exc_B(\pi)} s^{\antiexc_B(\pi)}\\  
\label{eqn:BnAn} 
\EE_n^-(t) & = & \sum_{\pi \in \BB_n^-} t^{\exc_B(\pi)} 
\mbox{\hspace{3 mm} and \hspace {3 mm} } 
\EE_n^-(s,t)  =  \sum_{\pi \in \BB_n^-} t^{\exc_B(\pi)} s^{\antiexc_B(\pi)}\\
\EEsgnE_n(s,t) & = & \sum_{\pi \in \BB_n} (-1)^{\inv_B(\pi)} t^{\exc_B(\pi)} 
s^{\antiexc_B(\pi)} \\
\EEsgnE_n(t)  & =  & \sum_{\pi \in \BB_n} (-1)^{\inv_B(\pi)} t^{\exc_B(\pi)} 
\end{eqnarray}

$B_n(s,t)$ is called the type-B Eulerian polynomial and 
Brenti in  \cite{brenti-q-eulerian-94} proved that $ B_n(s,t) = \EE_n(s,t)$. 

\begin{theorem}[Brenti]
\label{thm:brenti_fft}
 For all positive integers $n$, there exists a bijection $h_n:\BB_n \mapsto \BB_n$ 
 such that $\asc_B(h_n(\pi))=\wexc_B(\pi)$ and $\Negs((h(_n\pi))= \Negs(\pi)$. 
\end{theorem}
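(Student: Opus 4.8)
The plan is to construct $h_n$ as a signed analogue of the cycle-based map behind Foata's first fundamental transformation (Theorem~\ref{thm:Foatafirstfunda}), and then to verify the two asserted identities directly on that construction. The genuinely new ingredient is the orbit structure of $\pi$ acting on $T_n=\{-n,\dots,-1,1,\dots,n\}$: since $x\mapsto-x$ commutes with $\pi$, negation permutes the cycles of $\pi$, so each cycle $c$ is either \emph{self-paired} ($-c=c$, which forces $c=(a_1,\dots,a_k,-a_1,\dots,-a_k)$ to have even length) or is \emph{matched} with a distinct cycle $-c$. For each self-paired cycle I would fix the cyclic rotation beginning at the entry of largest absolute value; for each matched pair $\{c,-c\}$ I would keep the unique representative whose largest-absolute-value entry is positive and rotate it to begin there.

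I would then list these block-words in a fixed order of their leading entries---using whatever ``largest letter first, blocks ordered by their leaders'' convention realizes $\des(\FFT_n(\pi))=\exc(\pi)$ in type A, transported verbatim to signed letters (and mirrored, if needed, so that it is ascents rather than descents that are produced below)---and erase all parentheses to obtain a signed word $w=w_1\cdots w_n$ with $|w_1|,\dots,|w_n|$ a permutation of $[n]$; I set $h_n(\pi)\in\BB_n$ by declaring $h_n(\pi)_i=w_i$. Invertibility is proved exactly as in the classical case: the blocks of $w$ are recovered from the left-to-right records of the sequence $|w_1|,\dots,|w_n|$, and the representative-and-rotation rules make all signs uniquely reconstructible, so $h_n$ admits a two-sided inverse.

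There then remain two properties to check. For $\asc_B(h_n(\pi))=\wexc_B(\pi)$, I would adapt Foata's argument for Theorem~\ref{thm:Foatafirstfunda}, which (after the appropriate convention and a mirroring) identifies the ascents of the cycle-word $w$ with the weak type-B excedances of $\pi$ by a step-by-step comparison, the within-block steps (where $w_j,w_{j+1}$ lie in one cycle, so $\pi(w_j)=w_{j+1}$) and the between-block steps being analyzed separately, and using that over $\{0,1,\dots,n-1\}$ each step of $w$ is an ascent or a descent. A feature special to type B is that the length-two self-paired cycles $(i,-i)$ are exactly the indices $i$ with $\pi_i=-i$, so Brenti's correction term $|\{i:\pi_i=-i\}|$ in the type-B excedance number is absorbed by this accounting automatically. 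For $\Negs(h_n(\pi))=\Negs(\pi)$, one checks that $-j$ occurs as the image of a positive position under $\pi$ if and only if the cyclic predecessor of $-j$ inside its orbit-word occupies a positive slot, and that erasing parentheses under the chosen conventions preserves this predicate for every $j$.

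The step I expect to be the real obstacle is the sign bookkeeping of the $\Negs$ identity: pinning down the canonical representative and starting point for self-paired versus matched cycles so that $\Negs$ is preserved exactly, \emph{while} the ascent/excedance correspondence of the previous paragraph remains intact. If a clean closed-form description of $h_n$ proves unwieldy, a safe fallback is to build $h_n$ from $h_{n-1}$ by induction, tracking where the letters $\pm n$ are inserted; this reduces both identities to a single one-step insertion check, in the spirit of the recurrence of Lemma~\ref{lem:recurrenceforAEXc_n^+}.
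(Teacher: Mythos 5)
The paper does not actually prove this statement: it is quoted as Brenti's theorem (Theorem 3.15 of \cite{brenti-q-eulerian-94}) and the construction is omitted, so there is no internal proof to compare yours against. Your plan does follow the architecture of Brenti's argument --- view $\pi$ as a permutation of $T_n$, use that negation commutes with $\pi$ so that each cycle is either self-paired or matched with its negative, pick canonical representatives and starting points, concatenate, and invert via left-to-right records of the absolute values. That is the right skeleton.

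As written, however, the proposal has a genuine gap rather than a proof: every load-bearing step is deferred. The ordering and rotation conventions are specified only as ``whatever convention realizes $\des(\FFT_n(\pi))=\exc(\pi)$ in type A, transported verbatim \dots and mirrored, if needed,'' and the two identities to be verified are reduced to ``I would adapt Foata's argument'' and ``one checks''; you yourself flag the $\Negs$-preservation as an unresolved obstacle. The difficulty is not cosmetic. Brenti's excedance is not the naive $\pi_i>i$ but $|\{i:\pi_{|\pi(i)|}>\pi_i\}|+|\{i:\pi_i=-i\}|$, so the step-by-step matching between ascents of the concatenated word (including the artificial initial step from $\pi_0=0$) and weak excedances depends delicately on exactly which letter heads each block and in which order the blocks are listed; a convention that makes the ascent count come out right can easily change which letters appear negated in the window notation, and conversely. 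Until the representative, rotation and ordering rules are fixed explicitly and both identities are checked against them --- your fallback of an inductive insertion argument in the spirit of Lemma \ref{lem:recurrenceforAEXc_n^+} would be a legitimate way to do this --- the proposal does not establish the theorem.
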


Gamma positivity of the type-B Eulerian polynomial was
shown by Chow \cite{chow-certain_combin_expansions_eulerian} and
Petersen \cite{petersen-enriched_p_partitions_peak_algebras}.  We refine
these results to $\BB_n^+$ and $\BB_n^-$ when $n$ is even.  
We enumerate $B_n(t)$ with type-B 
inversions and find surprisingly that the signed type-B Eulerian 
polynomial is the same irrespective of whether we use descents
or excedances (see Theorem \ref{thm:signed_descent_ascent_npos_for_B_n}).

\comment{
Dey and Sivasubramanian in \cite{siva-dey-gamma_positive_descents_alt_group}
gave the following recurrence for $B_n(s,t)$. 

\begin{theorem}
For all positive integers $n \geq 2$, the following recurrrence is true. 
\begin{equation}
\label{eqn:rec_biv_eulerian_B}
B_{n+1}(s,t) = (s+t)B_n(s,t) + 2stD B_n(s,t).
\end{equation}
\end{theorem}
       
\begin{corollary}
\label{cor:foata_schutzenberger_typeB}
For all $n \geq 2$  we have the following recurrence relation.
\begin{equation}
\label{eqn:rec_biv_eulerian_excedance_B}
\EE_{n+1}(s,t) = (s+t)\EE_n(s,t) + 2stD \EE_n(s,t).
\end{equation} 
\end{corollary} 
}       	
       	
\comment{	
\begin{proof} We observe  that the term $(s+t)B_n(s,t)$ is the contribution of all 
those permutations $\pi \in \BB_n$ where the 
letter `$n+1$' or `$\ob{n+1}$', is at the last position.  It is 
easy to see that 
$\displaystyle 2st
DB_n(s,t)$ 
is the contribution of all $\pi \in \BB_n$ in which 
the letter `$n+1$' or `$\ob{n+1}$' appears in position $r$
for $1 \leq r \leq n$.  
\end{proof}
}

 \subsection{Equidistribution of Descent and excedance over $\BB_n^+$}       
  Let $a_1,a_2,\ldots,a_n$ be $n$ distinct positive integers with 
  $a_1 < a_2 < \cdots < a_n$. Let $\BB_{\{a_1,a_2,\ldots,a_n \}}$ 
  be the Type-B group of $2^nn!$ permutations on the letters 
  $a_1,a_2,\ldots,a_n$.  Clearly, when 
  $a_i = i$ for all $i$, we get $\BB_n = \BB_{ \{1,2,\ldots,n \}}$. 
  We write $\pi \in \BB_{\{a_1,a_2, \ldots, a_n\} }$ in two line
  notation with $a_1, a_2, \ldots, a_n$ above and $\pi_{a_i}$ below
  $a_i$.  Thus, $\pi_{a_i}$ is defined for all $i$.
  For any permutation $\pi \in  \BB_{\{a_1,a_2,\ldots,a_n \}}$
  define $\inv_B(\pi)= | \{ 1 \leq i < j \leq n : \pi_{a_i} > \pi_{a_j} \} |+  
  | \{ 1 \leq i < j \leq n : - \pi_{a_i} > \pi_{a_j} \} |+   \} +|\Negs(\pi)|$.
  Let $\BB_{ \{a_1,a_2,\ldots,a_n \} }^+$ 
  denote the subset of even length elements of $\BB_{\{a_1,a_2,\ldots,a_n \}}$ and 
  $\BB_{ \{a_1,a_2,\ldots,a_n \} }^- = \BB_{ \{a_1,a_2,\ldots,a_n \}}- \BB_{ \{a_1,a_2,\ldots,a_n \}}^+$. 
  Let $a_0 = 0$ and $\pi(0)=0$ for all $\pi$.  Define  
  $\des_B(\pi)= |\{ i \in \{0,1,2\ldots ,n-1\}: \pi_{a_i} > \pi_{a_{i+1}}\}| $ and 
  $\asc_B(\pi)= |\{ i \in \{0,1,2\ldots ,n-1\}: \pi_{a_i} <  \pi_{a_{i+1}}\}|$. 
  Also define $\pos_{a_i}(\pi)=k$ if $|\pi_{a_k}| =a_i$.  
  Define 
  \begin{eqnarray}
  \SB_{\{a_1,a_2,\ldots,a_n\}}(s,t,u)=\sum_{\pi \in \BB_{\{a_1,a_2,\ldots,a_n \}} }
  (-1)^{\inv_B(\pi)}t^{\des_B(\pi)}s^{\asc_B(\pi)}u^{\pos_{a_n}(\pi)}
  \end{eqnarray}

We recall the following result from 
\cite[Lemma 26]{siva-dey-gamma_positive_descents_alt_group}. 
  
\begin{lemma}
\label{lem:signed_sum_zero}
For all positive integers $a_1<a_2<\ldots<a_n$, 
\begin{eqnarray}
\label{eqn:an_not_at_last_position}
\sum_{\pi \in \BB_{{\{a_1,a_2,\ldots,a_{n} \}}},
\pos_{a_n}(\pi) \neq n  }(-1)^{\inv_B(\pi)}t^{\des_B(\pi)}
s^{\asc_B(\pi)} u^{\pos_{a_n}(\pi)}= 0
\end{eqnarray}
\end{lemma}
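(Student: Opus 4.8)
The plan is to produce a sign-reversing involution on the index set
$$\mathcal{X} = \{\pi \in \BB_{\{a_1,\dots,a_n\}} : \pos_{a_n}(\pi) \neq n\}$$
that fixes $\des_B(\pi)$, $\asc_B(\pi)$ and $\pos_{a_n}(\pi)$ while flipping the parity of $\inv_B(\pi)$; the claimed sum then vanishes orbit by orbit. For $\pi \in \mathcal{X}$ put $k = \pos_{a_n}(\pi)$, so $1 \le k \le n-1$ and the entry in position $k$ is $\pi_{a_k} \in \{a_n, \ob{a_n}\}$. Define $\iota(\pi)$ to be the signed permutation obtained from $\pi$ by replacing that one entry by its negative (swapping $a_n \leftrightarrow \ob{a_n}$ in position $k$) and leaving every other entry unchanged. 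Then $\iota$ maps $\mathcal{X}$ into itself, $\iota^2 = \id$, and $\iota$ has no fixed point, so $\mathcal{X}$ breaks up into two-element orbits $\{\pi, \iota(\pi)\}$.

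First I would verify that $\iota$ preserves $\des_B$, $\asc_B$ and $\pos_{a_n}$. Since $a_n$ is the unique largest letter, every entry of $\pi$ other than $\pi_{a_k}$ has absolute value strictly less than $a_n$, so both of its neighbours $\pi_{a_{k-1}}$ and $\pi_{a_{k+1}}$ lie strictly between $\ob{a_n}$ and $a_n$ (using the conventions $a_0 = 0$, $\pi(0) = 0$ when $k=1$, and $k+1 \le n$ since $k \le n-1$). Hence, if $\pi_{a_k} = a_n$ then index $k-1$ is an ascent and index $k$ is a descent of $\pi$, while if $\pi_{a_k} = \ob{a_n}$ then index $k-1$ is a descent and index $k$ is an ascent; in either case exactly one of $k-1,k$ is a descent and the other an ascent, and no index $i \notin \{k-1,k\}$ is affected by $\iota$. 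Therefore $\des_B$ and $\asc_B$ are unchanged, and $\pos_{a_n}$ is unchanged because position $k$ did not move.

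It remains to check that $\iota$ reverses the sign $(-1)^{\inv_B(\pi)}$. The conceptual reason is that $\iota$ is left multiplication by the element $\tau \in \BB_{\{a_1,\dots,a_n\}}$ that negates the single letter $a_n$; this element has odd length ($\inv_B(\tau) = 2n-1$, an easy count), and $(-1)^{\inv_B}$ is the sign character of the Coxeter group $\BB_{\{a_1,\dots,a_n\}}$, hence a homomorphism, so $(-1)^{\inv_B(\iota(\pi))} = -(-1)^{\inv_B(\pi)}$. (Alternatively one checks directly that negating the extremal entry changes $|\Negs(\pi)|$ by $\pm 1$ and alters each of the two inversion-type terms of $\inv_B$ only through the pairs involving position $k$, the net change in $\inv_B$ being odd.) Consequently every orbit $\{\pi,\iota(\pi)\}$ contributes
$$\left[(-1)^{\inv_B(\pi)} + (-1)^{\inv_B(\iota(\pi))}\right] t^{\des_B(\pi)} s^{\asc_B(\pi)} u^{\pos_{a_n}(\pi)} = 0,$$
and summing over orbits gives the identity. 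I expect the ascent/descent bookkeeping at the two indices $k-1$ and $k$ to be the only place needing genuine care; it is a short finite case check rather than a real obstacle, and everything else is immediate.
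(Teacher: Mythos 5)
Your argument is correct. Note that the paper itself gives no proof of this lemma --- it is quoted verbatim from \cite[Lemma 26]{siva-dey-gamma_positive_descents_alt_group} --- so there is nothing in-text to compare against; your write-up is a valid self-contained substitute. The involution $\iota$ that negates the single entry $\pm a_n$ sitting in position $k=\pos_{a_n}(\pi)\neq n$ does everything you claim: because $a_n$ is the extreme letter, the two comparisons at indices $k-1$ and $k$ always consist of exactly one ascent and one descent (using $\pi_{a_0}=0$ when $k=1$, and $k\le n-1$ guarantees the right-hand neighbour exists), so $\des_B$, $\asc_B$ and $\pos_{a_n}$ are preserved; and a direct count with the three-term definition of $\inv_B$ used in this subsection shows the change in $\inv_B$ is $\pm(2k-1)$ (the ordinary-inversion term changes by $2k-n-1$, the $-\pi_{a_i}>\pi_{a_j}$ term by $n-1$, and $|\Negs|$ by $\pm 1$), which is odd, confirming the sign reversal independently of the homomorphism argument. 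The only caveat worth flagging is that your ``sign character'' shortcut implicitly uses that this $\inv_B$ agrees mod $2$ with the Coxeter length of the corresponding element of $\BB_n$; that is true for the definition given in this subsection, but since the paper states two slightly different formulas for $\inv_B$ in different places, the explicit parity count you give in parentheses is the safer justification to keep.
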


\begin{theorem} 
\label{thm:signed_descent_ascent_npos_for_B_n}
For all positive integers $a_1<a_2<\ldots<a_n$, the following holds: 
\begin{eqnarray} 
\label{eqn:signed_descent_ascent_npos_for_B_n }
\SB_{\{a_1,a_2,\ldots,a_n\}}(s,t,u)=
\sum_{\pi \in \BB_{{\{a_1,a_2,\ldots,a_n \}}} }
(-1)^{\inv_B(\pi)}t^{\des_B(\pi)}s^{\asc_B(\pi)}u^{\pos_{a_n}(\pi)}
=(s-t)^nu^n.
\end{eqnarray}
\end{theorem}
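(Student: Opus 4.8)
The statement claims that $\SB_{\{a_1,\dots,a_n\}}(s,t,u) = (s-t)^n u^n$. The plan is to proceed by induction on $n$, peeling off the largest letter $a_n$ exactly as in the proof of Lemma~\ref{lem:signed_sum_zero}. The key observation, which is already isolated as Lemma~\ref{lem:signed_sum_zero}, is that the sum of the signed weight $(-1)^{\inv_B(\pi)}t^{\des_B(\pi)}s^{\asc_B(\pi)}u^{\pos_{a_n}(\pi)}$ over all $\pi$ with $\pos_{a_n}(\pi) \neq n$ vanishes. Consequently only the permutations in which $|\pi_{a_n}|$ sits in the last coordinate contribute, and for those $\pos_{a_n}(\pi) = n$, so the factor $u^{\pos_{a_n}(\pi)}$ is exactly $u^n$; this explains the $u^n$ in the claimed formula and reduces everything to understanding the remaining $(s-t)^n$.

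\textbf{The inductive step.} Among the contributing permutations, the last coordinate $\pi_{a_n}$ is either $a_n$ or $\overline{a_n}$. Write $\pi'$ for the restriction of $\pi$ to $\{a_1,\dots,a_{n-1}\}$, an element of $\BB_{\{a_1,\dots,a_{n-1}\}}$. First I would check how the four relevant statistics change. If $\pi_{a_n} = a_n$, then placing $a_n$ (the largest letter, positive) in the last slot creates no new inversions of either type and no new negative, so $\inv_B(\pi) = \inv_B(\pi')$; moreover $a_{n-1}$ was the previous last letter and $a_{n-1} < a_n$, so the comparison at position $n-1$ is an ascent, giving $\asc_B(\pi) = \asc_B(\pi') + 1$ and $\des_B(\pi) = \des_B(\pi')$. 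If instead $\pi_{a_n} = \overline{a_n}$, then $\overline{a_n}$ is smaller than every other entry, so it contributes $n-1$ to the ordinary inversion count, $0$ to the "negated-inversion" count $|\{i<j : -\pi_{a_i} > \pi_{a_j}\}|$ (since $a_n$ never appears with a positive sign among the first $n-1$ letters in a way that... — this parity bookkeeping needs care), and $1$ to $|\Negs(\pi)|$, while at position $n-1$ we now have a descent. I expect the parity of the total change in $\inv_B$ to be odd in this second case, so that the two choices $\pi_{a_n} = a_n$ and $\pi_{a_n} = \overline{a_n}$ contribute with opposite signs, weights $s$ and $-t$ respectively, and the recursion reads
\begin{equation}
\SB_{\{a_1,\dots,a_n\}}(s,t,u) = (s - t)\, u\, \SB_{\{a_1,\dots,a_{n-1}\}}(s,t,u)\big|_{u=1} \cdot u^{n-1},
\end{equation}
or more cleanly, after absorbing the $u$-powers, $\SB_{\{a_1,\dots,a_n\}}(s,t,u) = (s-t)u^n \cdot \bigl(\SB_{\{a_1,\dots,a_{n-1}\}}(s,t,u)/u^{n-1}\bigr)$, which by induction equals $(s-t)u^n (s-t)^{n-1} = (s-t)^n u^n$.

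\textbf{Base case and main obstacle.} The base case $n=1$ is immediate: $\BB_{\{a_1\}}$ has the two elements $a_1 \mapsto a_1$ (with $\inv_B = 0$, $\des_B = 0$, $\asc_B = 1$, $\pos_{a_1} = 1$) and $a_1 \mapsto \overline{a_1}$ (with $\inv_B = 1$, $\des_B = 1$, $\asc_B = 0$, $\pos_{a_1} = 1$), giving $su - tu = (s-t)u$. The main obstacle I anticipate is the sign bookkeeping in the inductive step: one must verify precisely that appending $\overline{a_n}$ flips the parity of $\inv_B$ relative to appending $a_n$, using the particular definition of $\inv_B$ on $\BB_{\{a_1,\dots,a_n\}}$ given in the excerpt (ordinary inversions plus "negated inversions" plus $|\Negs|$). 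Since $a_n$ is the largest letter, when it appears negated in the last position it is below all $n-1$ other entries in value, so it forms $n-1$ ordinary inversions with them and contributes $1$ via $|\Negs|$; meanwhile the "negated inversion" term $-\pi_{a_i} > \pi_{a_j}$ with $j = n$ (so $\pi_{a_j} = \overline{a_n}$, the most negative value) is never satisfied, contributing $0$. The net parity change is $n-1+1 = n \equiv n \pmod 2$ — so in fact the sign discrepancy between the two cases alternates with $n$, and one must track this carefully together with the descent/ascent flip at position $n-1$ to confirm the clean factor $(s-t)$ emerges at every step regardless of parity. Once this local computation is pinned down, the induction closes immediately using Lemma~\ref{lem:signed_sum_zero} to kill all non-contributing terms.
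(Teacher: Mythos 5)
Your overall strategy --- induction on $n$, using Lemma~\ref{lem:signed_sum_zero} to annihilate every permutation in which $a_n$ is not in the last position, and then computing the two contributions from appending $a_n$ and $\ob{a_n}$ at the end --- is exactly the paper's proof. However, the local sign computation that you yourself flag as the main obstacle is left unresolved, and the version of it you do carry out is incorrect in a way that, if taken at face value, would break the theorem for even $n$. You claim that when $\ob{a_n}$ is placed last, the negated-inversion term $|\{i<j : -\pi_{a_i} > \pi_{a_j}\}|$ picks up no new pairs. But for a new pair with $j=n$ the condition reads $-\pi_{a_i} > \pi_{a_n} = -a_n$, i.e.\ $\pi_{a_i} < a_n$, which holds for \emph{every} $i<n$ since $|\pi_{a_i}| < a_n$; so this term increases by $n-1$, not by $0$. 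The total change in $\inv_B$ upon appending $\ob{a_n}$ is therefore $(n-1)+(n-1)+1 = 2n-1$, which is odd for every $n$. Hence the two placements always carry opposite signs, the last-position permutations contribute $s\,u^n$ and $-t\,u^n$ times the $(n-1)$-letter signed sum evaluated at $u=1$, and the factor $(s-t)u^n$ emerges uniformly --- there is no parity alternation to track. With that one correction your induction closes and coincides with the paper's argument, whose equation \eqref{eqn:an_at_last} records precisely this $(s-t)(s-t)^{n-1}u^{n}$ contribution.
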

\begin{proof}
We induct on $n$. When $n=1$, clearly $\SB_{a_1}(s,t,u)=(s-t)u$.
Assume that for any $n-1$ integers $a_1<a_2<\ldots<a_{n-1}$,  
\begin{equation*}
\SB_{\{a_1,a_2,\ldots,a_{n-1}\}}(s,t,u)=
\sum_{\pi \in \BB_{{\{a_1,a_2,\ldots,a_{n-1} \}}} }(-1)^{\inv_B(\pi)}t^{\des_B(\pi)}s^{\asc_B(\pi)}
u^{\pos_{a_{n-1}}(\pi)}=(s-t)^{n-1}u^{n-1}.
\end{equation*} 
Adding $ \pm a_n$ at position $n$, we get   
\begin{eqnarray}
\label{eqn:an_at_last}
\sum_{\pi \in \BB_{{\{a_1,a_2,\ldots,a_{n-1},a_{n} \}}},
\pos_{a_{n}}(\pi)=n }(-1)^{\inv_B(\pi)}t^{\des_B(\pi)}s^{\asc_B(\pi)}
u^{\pos_{a_{n}}(\pi)}=(s-t)(s-t)^{n-1}u^{n}\
\end{eqnarray}     	
where $s(s-t)^{n-1}u^{n}$ is the contribution of 
of $\pi \in \BB_{a_1,a_2,\ldots,a_n}$ with $a_n$ appearing in the 
final position of $\pi$ and 
$-t(s-t)^{n-1}u^{n}$ is the contribution of $\pi \in 
\BB_{a_1,a_2,\ldots,a_n}$ with $\ob{a_n}$ appearing in the 
final position of $\pi$.  The proof is 
complete by summing \eqref{eqn:an_not_at_last_position} and 
\eqref{eqn:an_at_last}. 
  \end{proof}

Setting $u=1$ and $a_i=i$ in Theorem 
\ref{thm:signed_descent_ascent_npos_for_B_n}, 
we get the following:
  
\begin{corollary}  
\begin{eqnarray}
\label{eqn:signed_des_asc_over_Bn}
\SB_n(s,t)  =\sum_{\pi \in \BB_{n} }(-1)^{\inv_B(\pi)}t^{\des_B(\pi)}s^{\asc_B(\pi)}=(s-t)^n.
\end{eqnarray}
\end{corollary}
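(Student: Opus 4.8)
The plan is to obtain the corollary as a direct specialization of Theorem~\ref{thm:signed_descent_ascent_npos_for_B_n}; no new argument is needed. First I would take $a_i = i$ for every $1 \le i \le n$, so that $\BB_{\{a_1,a_2,\ldots,a_n\}}$ is literally $\BB_n$. Under this choice the two-line notation of the general setup becomes the usual window notation on $\BB_n$, and the statistics $\des_B$, $\asc_B$, $\inv_B$, $\pos_{a_n}$ defined there restrict to the corresponding statistics on $\BB_n$ (with $\pos_{a_n}(\pi)$ becoming the position at which $\pm n$ occurs). Hence $\SB_{\{1,2,\ldots,n\}}(s,t,u) = \sum_{\pi \in \BB_n}(-1)^{\inv_B(\pi)} t^{\des_B(\pi)} s^{\asc_B(\pi)} u^{\pos_{a_n}(\pi)}$, and Theorem~\ref{thm:signed_descent_ascent_npos_for_B_n} identifies this polynomial with $(s-t)^n u^n$.

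Next I would put $u = 1$. Then $u^{\pos_{a_n}(\pi)} = 1$ for every $\pi \in \BB_n$, so the left-hand side collapses to $\SB_n(s,t) = \sum_{\pi \in \BB_n}(-1)^{\inv_B(\pi)} t^{\des_B(\pi)} s^{\asc_B(\pi)}$ and the right-hand side becomes $(s-t)^n$, giving the claimed equality.

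The only point that warrants a line of checking --- and the closest thing here to an obstacle --- is the consistency of the two formulas for $\inv_B$: in the general ground-set discussion it is defined as the ordinary number of inversions plus the number of pairs $i<j$ with $-\pi_{a_i} > \pi_{a_j}$ plus $|\Negs(\pi)|$, whereas at the start of the section the $\BB_n$ version was written as the ordinary number of inversions plus $\sum_{i \in \Negs(\pi)} i$. These are the standard equivalent expressions for the Coxeter length of a signed permutation and therefore coincide; in particular the sign $(-1)^{\inv_B(\pi)}$, and with it the partition $\BB_n = \BB_n^+ \sqcup \BB_n^-$, does not depend on which form one uses. Once this is observed, the corollary follows immediately from the theorem.
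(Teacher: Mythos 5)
Your proposal is correct and is exactly the paper's route: the paper obtains the corollary by the same specialization $a_i=i$, $u=1$ of Theorem~\ref{thm:signed_descent_ascent_npos_for_B_n}. Your extra remark that the two expressions for $\inv_B$ are the standard equivalent formulas for type-B length (so the sign is unambiguous) is a sensible sanity check that the paper leaves implicit.
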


\comment{
Sivaramakrishnan in \cite{siva-sgn_exc_hyp} has proved the following theorem. 
\begin{theorem}\label{thm:formula_uniivariate_signed_excedance}
For all positive integers $n$,  
\begin{eqnarray}
\EEsgnE_{n}(t) = (1-t)^{n}
\end{eqnarray} 
\end{theorem} 
}

Sivasubramanian in \cite{siva-sgn_exc_hyp} showed that 
$\EEsgnE_n(t) = (1-t)^n$ and the same proof shows the
following.

\vspace{-5 mm}

\begin{eqnarray} 
\label{eqn:formulabivariatesignedexcedanceB}
\EEsgnE_{n}(s,t) = (s-t)^{n}
\end{eqnarray}  

We get the following surprising result about enumerating 
type-B excedances and type-B descents in $\BB_n^+$.  Note that
an analogous result is not true over $\SSS_n$.  As mentioned
earlier, Theorem \ref{thm:brenti_fft} due to Brenti 
gives a type-B bijective counterpart of 
Foata's First Fundamental Transformation.  It is 
easy to check that Brenti's bijection neither preserves 
nor reverses parity of $\inv_B()$ and so is not directly useful
to prove the result below.

\begin{theorem}
\label{thm:descentandexcedanceequidistributedoverEvenBtype}
For all positive integers $n$, type-B excedances and type-B 
descents are equidistributed over $\BB_n^+$ and $\BB_n^-$.  That is,
$$ \EE_n^+(s,t) = B_n^+(s,t) \mbox{\hspace{5 mm} and \hspace{5 mm}}   \EE_n^-(s,t) = B_n^-(s,t).$$ 
\end{theorem}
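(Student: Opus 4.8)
The plan is to reduce everything to an averaging argument, using the two closed-form sign-enumeration identities already in hand. We know from the remark following Theorem~\ref{thm:brenti_fft} that $B_n(s,t) = \EE_n(s,t)$, from \eqref{eqn:signed_des_asc_over_Bn} that $\SB_n(s,t) = (s-t)^n$, and from \eqref{eqn:formulabivariatesignedexcedanceB} that $\EEsgnE_n(s,t) = (s-t)^n$. The key observation is that $\BB_n^+$ and $\BB_n^-$ are exactly the two fibres of the sign character $\pi \mapsto (-1)^{\inv_B(\pi)}$ on $\BB_n$, so the generating functions restricted to $\BB_n^\pm$ are recovered as one half of the sum, respectively one half of the difference, of the unsigned and the signed generating functions.

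Concretely, the first step is the elementary bookkeeping: since $1 + (-1)^{\inv_B(\pi)}$ equals $2$ for $\pi \in \BB_n^+$ and $0$ for $\pi \in \BB_n^-$, adding the defining sum of $B_n(s,t)$ in \eqref{eqn:descentBtypebivariate} to the signed sum $\SB_n(s,t)$ gives $B_n(s,t) + \SB_n(s,t) = 2 B_n^+(s,t)$, and subtracting gives $B_n(s,t) - \SB_n(s,t) = 2 B_n^-(s,t)$. The identical manipulation applied to the excedance enumerator $\EE_n(s,t)$ of \eqref{eqn:Bn} and the signed excedance enumerator $\EEsgnE_n(s,t)$ gives $\EE_n(s,t) + \EEsgnE_n(s,t) = 2\EE_n^+(s,t)$ and $\EE_n(s,t) - \EEsgnE_n(s,t) = 2\EE_n^-(s,t)$.

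The second step is simply to substitute the known evaluations. Using $\SB_n(s,t) = (s-t)^n = \EEsgnE_n(s,t)$ together with Brenti's $B_n(s,t) = \EE_n(s,t)$, the four displays above collapse to
$$B_n^+(s,t) = \tfrac12\bigl(\EE_n(s,t) + (s-t)^n\bigr) = \EE_n^+(s,t), \qquad B_n^-(s,t) = \tfrac12\bigl(\EE_n(s,t) - (s-t)^n\bigr) = \EE_n^-(s,t),$$
which is exactly the claim.

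There is essentially no remaining obstacle: all the content has been pushed into the two sign-enumeration formulas and Brenti's equidistribution theorem, all of which are quoted. The one point deserving a sentence of care is the identification of $\BB_n^+$ with the even-$\inv_B$ class, so that the sign character genuinely separates $\BB_n^+$ from $\BB_n^-$ and the averaging is legitimate; this is immediate from the definitions adopted here. It also clarifies, as the statement remarks, why the type-A analogue fails: over $\SSS_n$ the signed excedance enumerator is $(s-t)^{n-1}$ by Corollary~\ref{cor:mantaci_theorem}, whereas the signed \emph{descent} enumerator is a genuinely different polynomial, so the two averages $\GE_n^+(s,t)$ and $A_n^+(s,t)$ need not coincide. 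The coincidence that makes the type-B statement work is precisely that \emph{both} signed enumerators over $\BB_n$ equal $(s-t)^n$.
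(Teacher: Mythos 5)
Your argument is correct and is exactly the paper's proof: the paper's one-line justification (``follows from \eqref{eqn:signed_des_asc_over_Bn} and \eqref{eqn:formulabivariatesignedexcedanceB}'') is precisely the averaging over the sign character $(-1)^{\inv_B}$ combined with Brenti's $B_n(s,t)=\EE_n(s,t)$, which you have simply written out in full. Your closing remark explaining why the type-A analogue fails is a nice addition but not part of the paper's proof.
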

\begin{proof}
Follows from \eqref{eqn:signed_des_asc_over_Bn} and
\eqref{eqn:formulabivariatesignedexcedanceB}.
\end{proof}

In \cite{siva-dey-gamma_positive_descents_alt_group}, Dey and 
Sivasubramanian proved several results about gamma positivity of 
$B_n^+(s,t)$ and $\BB_n^-(s,t)$.  Due to 
Theorem \ref{thm:descentandexcedanceequidistributedoverEvenBtype},
these are valid for $\EE_n^+(s,t)$ and $\EE_n^-(s,t)$.  The 
following three results follow from 
\cite[Theorems 28,29,30]{siva-dey-gamma_positive_descents_alt_group}.
    
\begin{lemma}
The following recurrence relations are true  for
$\EE_n^+(s,t)$ and $\EE_n^-(s,t)$ for all positive integers
$n \geq 2$. 
\begin{eqnarray}\label{rec.Btypeexcedanceeven}
\EE_n^+(s,t) & =&  s\EE_{n-1}^+(s,t)+
t\EE_{n-1}^-(s,t)+stD
\EE_{n-1}(s,t), \vspace{-5 mm} \\\label{rec.Btypeexcedanceodd}
    	 \EE_n^-(s,t) & = & s\EE_{n-1}^-(s,t)+
    	t\EE_{n-1}^+(s,t)+
    	st D
    	\EE_{n-1}(s,t). 
    	\end{eqnarray}
    \end{lemma}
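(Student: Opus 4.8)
The plan is to imitate exactly the argument used to prove the type-A recurrence in Lemma~\ref{lem:recurrenceforAEXc_n^+}, now working inside $\BB_n$. First I would classify a signed permutation $\pi \in \BB_n$ according to where the letter of largest absolute value, namely $n$ or $\ob{n}$, sits and with which sign. There are essentially three cases: (i) $n$ (positive sign) occupies position $n$; (ii) $\ob{n}$ occupies position $n$; (iii) $n$ or $\ob{n}$ occupies some position $r$ with $1 \le r \le n-1$. Deleting that largest letter and relabelling gives, in each case, a bijection onto $\BB_{n-1}$ (suitably decorated), and I would track how $\exc_B$, $\antiexc_B$, and the parity of $\inv_B$ change under this deletion. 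The key point, already established in the descent-based companion paper via Lemma~\ref{lem:signed_sum_zero} and Theorem~\ref{thm:signed_descent_ascent_npos_for_B_n}, is the interaction with parity, and here I can shortcut by invoking Theorem~\ref{thm:descentandexcedanceequidistributedoverEvenBtype}: since $\EE_n^+(s,t) = B_n^+(s,t)$ and $\EE_n^-(s,t) = B_n^-(s,t)$, it suffices to prove the stated recurrences for $B_n^+(s,t)$ and $B_n^-(s,t)$ with $\des_B, \asc_B$ in place of $\exc_B, \antiexc_B$ — and those are precisely Theorems~28 and~29 of \cite{siva-dey-gamma_positive_descents_alt_group}, which are being quoted. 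So in fact the cleanest proof is: the recurrences hold by \cite[Theorems~28,29]{siva-dey-gamma_positive_descents_alt_group} combined with Theorem~\ref{thm:descentandexcedanceequidistributedoverEvenBtype}.

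If instead one wants a self-contained excedance-side derivation, I would argue directly as follows. Cases (i) and (ii): removing $n$ (resp.\ $\ob{n}$) from the last position changes $\inv_B$ by $0$ (resp.\ by $n$, coming solely from the contribution $\sum_{i \in \Negs(\pi)} i$), so it preserves (resp.\ toggles) the $\BB^+$/$\BB^-$ membership; meanwhile, a positive $n$ in the last slot is a non-excedance contributing the factor $s$, and $\ob{n}$ in the last slot contributes the factor $t$ (one checks $\exc_B$ directly from Brenti's definition). This yields the terms $s\EE_{n-1}^+(s,t)$ and $t\EE_{n-1}^-(s,t)$ in \eqref{rec.Btypeexcedanceeven}, and symmetrically $s\EE_{n-1}^-(s,t)$, $t\EE_{n-1}^+(s,t)$ in \eqref{rec.Btypeexcedanceodd}. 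Case (iii): here the largest letter is strictly interior, so — exactly as in the proof of Lemma~\ref{lem:excedanceallexceptpenultimateandlast} — there is a sign-reversing involution (swap the interior largest letter with the last letter) that is $\exc_B$-preserving and parity-toggling, forcing the contribution of case (iii) to split evenly between $\BB_n^+$ and $\BB_n^-$. It remains to identify that common half-contribution with $st\,D\,\EE_{n-1}(s,t)$: inserting $n$ or $\ob{n}$ into position $r$ of an element of $\BB_{n-1}$, summed over all $1 \le r \le n-1$ and both signs, produces exactly $2st\,D\,\EE_{n-1}(s,t)$ (this is the content of the commented-out Corollary~\ref{cor:foata_schutzenberger_typeB}-type identity $\EE_{n+1}(s,t) = (s+t)\EE_n(s,t) + 2stD\EE_n(s,t)$), so each of $\BB_n^+$ and $\BB_n^-$ picks up $st\,D\,\EE_{n-1}(s,t)$.

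The main obstacle in the direct approach is the bookkeeping of $\exc_B$ under insertion of the largest letter into an interior position — Brenti's definition of $\exc_B$ involves the term $|\{i : \pi_{|\pi(i)|} > \pi_i\}|$, which is not local, so one must verify carefully that inserting $\pm n$ at position $r$ changes the excedance count in the same way that inserting the letter $n$ changes the descent count under the Brenti bijection $h_n$ of Theorem~\ref{thm:brenti_fft}; this is where the factor $2stD$ and the precise split into the $s$-term and $t$-term must be checked. Given that all of this has already been carried out on the descent side in \cite{siva-dey-gamma_positive_descents_alt_group} and transported via Theorem~\ref{thm:descentandexcedanceequidistributedoverEvenBtype}, I would present the short proof (citation plus equidistribution) as the actual proof and relegate the direct insertion argument to a remark.
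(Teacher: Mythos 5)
Your proposed proof is exactly the paper's: the lemma is justified there by quoting Theorems 28--30 of \cite{siva-dey-gamma_positive_descents_alt_group} for $B_n^{\pm}(s,t)$ and transporting them through Theorem \ref{thm:descentandexcedanceequidistributedoverEvenBtype}, with no direct excedance-side derivation given. One small slip in your optional sketch: deleting $\ob{n}$ from the last position changes $\inv_B$ by $2n-1$ (the $n-1$ ordinary inversions it creates plus $n$ from the $\Negs$ term), not by $n$; since $2n-1$ is always odd the parity still toggles, so your conclusion is unaffected.
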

     
   \comment{ 
    \begin{proof} 
    \begin{align*}
     \EE_n^+(s,t) & = \frac{\EE_n(s,t)+(s-t)^n}{2} \\
                    & = \frac{(s+t)\EE_{n-1}(s,t) + 
                    2stD 
                    \EE_{n-1}(s,t)+(s-t)^n}{2} \\
                    & = (s-t)\frac{\EE_{n-1}(s,t)+
                    (s-t)^{n-1}}{2}
                    +t\EE_{n-1}(s,t)+stD
                    \EE_{n-1}(s,t)\\
                    & = (s-t) \EE_{n-1}^+(s,t)+ t\EE_{n-1}(s,t)+
                    stD 
                    \EE_{n-1}(s,t) \\
                    & =s\EE_{n-1}^+(s,t)+
                    t\EE_{n-1}^-(s,t)+
                    stD 
                    \EE_{n-1}(s,t)
      \end{align*}
    Hence, the first statement is proved. Similarly we get  
    the second one also. \end{proof}} 
\comment{
We use the above recurrence relations repeatedly to obtain 
	\begin{eqnarray} \EE_n^+(s,t)  
	& = & s\EE_{n-1}^+(s,t)+t\EE_{n-1}^-(s,t)+
	stD
	\EE_{n-1}(s,t)  \nonumber \\
	& = &(s^2+2st+t^2)\EE_{n-2}^+(s,t)	+ 
	4st  \EE_{n-2}^-(s,t)+  \label{mainrecBtype} \\
	&  &4st(s+t)D
	\EE_{n-2}(s,t)+2s^2t^2D ^2
	\EE_{n-2}(s,t)    \nonumber
	\end{eqnarray}
	In a similar manner we obtain
	\begin{eqnarray} \EE_n^-(s,t)& = 
	& 
	(s^2+2st+t^2)\EE_{n-2}^-(s,t)	+ 
	4st  \EE_{n-2}^+(s,t)+ \\
	& & 4st(s+t)D
	\EE_{n-2}(s,t)+2s^2t^2D ^2
	\EE_{n-2}(s,t)  \nonumber
	\end{eqnarray}}

\begin{theorem}
\label{thm:main_thm_for_Btype_excedance}
For all positive even integers $n = 2m$ with $n \geq 2$, 
$\EE_{n}^+(s,t)$ and $\EE_{n}^-(s,t)$ are gamma positive 
polynomials with the same center of symmetry $m$.
\end{theorem}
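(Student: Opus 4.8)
The plan is to prove Theorem \ref{thm:main_thm_for_Btype_excedance} by induction on the even integer $n = 2m$, advancing two steps at a time, in close analogy with the proof of Theorem \ref{thm:maintheoremforexccedance}; here the jump is by $2$ rather than $4$, which makes the argument genuinely simpler. The only ingredients are the recurrences \eqref{rec.Btypeexcedanceeven} and \eqref{rec.Btypeexcedanceodd}, the identity $\EEsgnE_n(s,t) = (s-t)^n$ of \eqref{eqn:formulabivariatesignedexcedanceB}, and the operator lemmas of Section \ref{subsec:gamma_prelims}.

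First I would iterate \eqref{rec.Btypeexcedanceeven} and \eqref{rec.Btypeexcedanceodd} once. Adding them gives the recurrence $\EE_{n}(s,t) = (s+t)\EE_{n-1}(s,t) + 2stD\EE_{n-1}(s,t)$ for the full type-B excedance polynomial, and then, substituting and using $D(st) = s+t$, $D(s+t) = 2$ together with the Leibniz rule for $D$, a short computation yields
\begin{eqnarray}
\EE_n^+(s,t) & = & (s+t)^2\EE_{n-2}^+(s,t) + 4st\,\EE_{n-2}^-(s,t) + 4st(s+t)D\EE_{n-2}(s,t) + 2(st)^2 D^2\EE_{n-2}(s,t), \nonumber \\
\EE_n^-(s,t) & = & (s+t)^2\EE_{n-2}^-(s,t) + 4st\,\EE_{n-2}^+(s,t) + 4st(s+t)D\EE_{n-2}(s,t) + 2(st)^2 D^2\EE_{n-2}(s,t). \nonumber
\end{eqnarray}
The crucial point is that, after this iteration, the derivative terms involve only the full polynomial $\EE_{n-2}(s,t)$, so that no hypothesis about $D\EE_{n-2}^{+}$ or $D\EE_{n-2}^{-}$ separately is required.

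For the base case $n = 2$ (so $m = 1$), I would compute directly: from $\EEsgnE_2(s,t) = (s-t)^2$ and $\EE_2(s,t) = B_2(s,t) = s^2 + 6st + t^2$ one gets $\EE_2^+(s,t) = \frac12(\EE_2(s,t) + (s-t)^2) = (s+t)^2$ and $\EE_2^-(s,t) = \frac12(\EE_2(s,t) - (s-t)^2) = 4st$, both bivariate gamma positive with $\cnos = 1 = m$. For the inductive step, let $n = 2m \geq 4$ and assume $\EE_{n-2}^+(s,t)$ and $\EE_{n-2}^-(s,t)$ are bivariate gamma positive with center of symmetry $m-1$; then $\EE_{n-2}(s,t) = \EE_{n-2}^+(s,t) + \EE_{n-2}^-(s,t)$ is gamma positive with $\cnos = m-1$ as well, and by Lemma \ref{lem:derivative_gamma_nonneg} and Corollary \ref{cor:gamma_nonneg}(1) the polynomials $D\EE_{n-2}(s,t)$ and $D^2\EE_{n-2}(s,t)$ are gamma positive with centers of symmetry $(2m-3)/2$ and $m-2$. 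I would then verify the four summands in the two-step recurrence one at a time: $(s+t)^2\EE_{n-2}^+$ is gamma positive with $\cnos = m$ by two applications of Corollary \ref{cor:gamma_nonneg}(3); $4st\,\EE_{n-2}^-$ has $\cnos = m$ by Corollary \ref{cor:gamma_nonneg}(2) with $i = 1$; $4st(s+t)D\EE_{n-2}$ has $\cnos = m$ by Corollary \ref{cor:gamma_nonneg}(3) followed by (2) with $i = 1$; and $2(st)^2D^2\EE_{n-2}$ has $\cnos = m$ by Corollary \ref{cor:gamma_nonneg}(2) with $i = 2$. Since a positive scalar multiple of a bivariate gamma positive polynomial is again gamma positive, and a sum of bivariate gamma positive polynomials sharing one common center of symmetry is gamma positive with that same center, we conclude that $\EE_n^+(s,t)$ and $\EE_n^-(s,t)$ are gamma positive with $\cnos = m$, closing the induction.

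I do not anticipate a genuine obstacle: the substantive steps are the correctness of the two-step recurrence (a routine, if slightly delicate, Leibniz-rule computation) and the accounting of centers of symmetry. The point worth flagging is why the naive shortcut fails: writing $\EE_n^+ = \frac12(\EE_n + (s-t)^n)$ and invoking gamma positivity of $\EE_n$ does not work, because $(s-t)^n$ is not gamma positive, and it is precisely the recurrence that exposes the cancellation between $\EE_n$ and $(s-t)^n$ cleanly. Alternatively, since for $n = 2m$ one has $(s-t)^{2m} = \sum_{k=0}^{m}(-4)^k\binom{m}{k}(st)^k(s+t)^{2m-2k}$, the direct approach could be pushed through provided one knew the lower bound $\gamma_{2m,k}(B_{2m}) \geq 4^k\binom{m}{k}$ on the $\gamma$-coefficients of $B_{2m}$; the recurrence argument has the advantage of not needing any such explicit estimate.
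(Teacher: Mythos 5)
Your proof is correct and takes essentially the same approach as the paper's: the two-step recurrence you derive, $\EE_n^{\pm}(s,t) = (s+t)^2\EE_{n-2}^{\pm}(s,t) + 4st\,\EE_{n-2}^{\mp}(s,t) + 4st(s+t)D\EE_{n-2}(s,t) + 2(st)^2D^2\EE_{n-2}(s,t)$, with base case $\EE_2^+ = (s+t)^2$, $\EE_2^- = 4st$ and the center-of-symmetry bookkeeping via Lemma \ref{lem:derivative_gamma_nonneg} and Corollary \ref{cor:gamma_nonneg}, is exactly the induction the authors use. The paper formally delegates the details to their earlier descent-based results via the equidistribution Theorem \ref{thm:descentandexcedanceequidistributedoverEvenBtype}, but the underlying argument is identical to yours.
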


\comment{
     \begin{proof}
         For $n=2$, $\EE_{n}^-(s,t)=4st$
         	and 
         $\EE_{n}^+(s,t)=s^2+2st+t^2$ both of which are  
         gamma non-negative with centre of symmetry $1$. 
         Now, by induction we assume 
         for $n-2=2m-2$, both $\EE_{n-2}^-(s,t)$
         	and 
         $\EE_{n-2}^+(s,t)$ are Gamma-nonnegative with center 
         of symmetry $m-1$. Then, 
         using the earlier recurrence in \eqref{mainrecBtype} 
         we observe that each of the 
         four terms in \eqref{mainrecBtype} has center 
         of symmetry $m$.  Since 
         the terms all have the same center of symmetry, 
         the polynomial
         $\EE_{2m}^+(s,t)$ is gamma non-negative with 
         center of symmetry $m$, completing the proof. 
         In exactly similar manner, one can prove that 
         $\EE_{2m}^-(s,t)$ is gamma non-negative with
          center of symmetry $m$.	
     \end{proof}}

\begin{theorem}
\label{thm:sumof2foroddBtypeexcedance}
For all positive odd integers $n \geq 3$, $\EE_{n}^-(t)$ and 
$\EE_{n}^+(t)$ can be written as a sum of $2$ gamma positive polynomials.
\end{theorem}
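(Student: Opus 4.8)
The plan is to transcribe the argument used for Theorem~\ref{thm:sumof2forevenAtype}, with the recurrences \eqref{rec.Btypeexcedanceeven} and \eqref{rec.Btypeexcedanceodd} in place of Lemma~\ref{lem:recurrenceforAEXc_n^+} and with Theorem~\ref{thm:main_thm_for_Btype_excedance} supplying the already proved even case. Write $n = 2m+1$ with $m \geq 1$ and set $s = 1$ in \eqref{rec.Btypeexcedanceeven} and \eqref{rec.Btypeexcedanceodd}; this yields
$$\EE_{2m+1}^+(t) = \EE_{2m}^+(t) + t\,\EE_{2m}^-(t) + p(t) \qquad\text{and}\qquad \EE_{2m+1}^-(t) = \EE_{2m}^-(t) + t\,\EE_{2m}^+(t) + p(t),$$
where $p(t) = \bigl(st\,D\EE_{2m}(s,t)\bigr)\big|_{s=1}$ is the \emph{same} polynomial in both identities. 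By Theorem~\ref{thm:main_thm_for_Btype_excedance} the bivariate polynomials $\EE_{2m}^+(s,t)$ and $\EE_{2m}^-(s,t)$ are gamma positive with center of symmetry $m$, hence so are the univariate specializations $\EE_{2m}^+(t)$ and $\EE_{2m}^-(t)$, with $\cnos = m$ (specializing $s=1$ in a bivariate gamma positive polynomial yields a univariate gamma positive one with the same center of symmetry). Moreover $\EE_{2m}(s,t) = \EE_{2m}^+(s,t) + \EE_{2m}^-(s,t)$ is bivariate gamma positive with $\cnos = m$ as well (equivalently, invoke Chow and Petersen).

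Next I would analyze $p$. By Lemma~\ref{lem:derivative_gamma_nonneg}, $D\EE_{2m}(s,t)$ is bivariate gamma positive with $\cnos = (2m-1)/2$, and then part~(2) of Corollary~\ref{cor:gamma_nonneg} shows that $st\,D\EE_{2m}(s,t)$ is bivariate gamma positive with $\cnos = m + 1/2$. Specializing $s=1$, the polynomial $p(t)$ is univariate gamma positive and palindromic with $\cnos(p(t)) = m + 1/2$; since this center of symmetry is a half-integer, $\lngt(p(t))$ is odd. Lemma~\ref{lem:more_gamma_positive} then writes $p(t) = p_1(t) + p_2(t)$ with $p_1(t), p_2(t)$ gamma positive of even length and $\cnos(p_1(t)) = m$, $\cnos(p_2(t)) = m+1$.

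Finally I would regroup the two identities. Multiplying a univariate palindromic gamma positive polynomial by $t$ raises its center of symmetry by one and preserves gamma positivity, so $t\,\EE_{2m}^-(t)$ and $t\,\EE_{2m}^+(t)$ are gamma positive with $\cnos = m+1$. Hence
$$\EE_{2m+1}^+(t) = \underbrace{\bigl(\EE_{2m}^+(t) + p_1(t)\bigr)}_{\cnos\,=\,m} \;+\; \underbrace{\bigl(t\,\EE_{2m}^-(t) + p_2(t)\bigr)}_{\cnos\,=\,m+1},$$
and similarly $\EE_{2m+1}^-(t) = \bigl(\EE_{2m}^-(t) + p_1(t)\bigr) + \bigl(t\,\EE_{2m}^+(t) + p_2(t)\bigr)$. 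In each display every bracket is a sum of two gamma positive polynomials sharing a center of symmetry, and such a sum is again gamma positive: the $\Gamma$-basis of the summand of larger length contains, after the obvious index shift, the $\Gamma$-basis of the other, so the nonnegative $\gamma$-coefficients simply add up. This exhibits $\EE_{2m+1}^+(t)$ and $\EE_{2m+1}^-(t)$ as sums of two gamma positive polynomials, in fact with centers of symmetry differing by one. No separate base case is required, since Theorem~\ref{thm:main_thm_for_Btype_excedance} already covers every even $2m \geq 2$.

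I do not anticipate a genuine obstacle here: the argument is essentially the Type-A even proof with the Type-B recurrences substituted in. The one step carrying real content is confirming that, after setting $s=1$, the ``overflow'' term $p(t)$ has odd length, since that is precisely what lets Lemma~\ref{lem:more_gamma_positive} split it into two gamma positive pieces straddling the center; everything else is a routine bookkeeping of centers of symmetry through the operator $D$, through multiplication by $st$, and through multiplication by $t$.
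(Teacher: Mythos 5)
Your argument is correct, but it is not the route the paper takes for this particular statement. The paper gives no direct proof of Theorem~\ref{thm:sumof2foroddBtypeexcedance}: it observes (via the signed identities \eqref{eqn:signed_des_asc_over_Bn} and \eqref{eqn:formulabivariatesignedexcedanceB}, packaged as Theorem~\ref{thm:descentandexcedanceequidistributedoverEvenBtype}) that $\EE_n^{\pm}(s,t)=B_n^{\pm}(s,t)$, and then imports the descent-based versions of Theorems~\ref{thm:main_thm_for_Btype_excedance} and~\ref{thm:sumof2foroddBtypeexcedance} wholesale from the companion paper \cite[Theorems 28--30]{siva-dey-gamma_positive_descents_alt_group}. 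What you do instead is rerun the Theorem~\ref{thm:sumof2forevenAtype} machinery directly on the excedance recurrences \eqref{rec.Btypeexcedanceeven}--\eqref{rec.Btypeexcedanceodd}: set $s=1$, isolate the common overflow term $p(t)=\bigl(st\,D\EE_{2m}(s,t)\bigr)\big|_{s=1}$, verify via Lemma~\ref{lem:derivative_gamma_nonneg} and Corollary~\ref{cor:gamma_nonneg} that $\cnos(p)=m+\tfrac12$ is half-integral so that $\lngt(p)$ is odd, split $p=p_1+p_2$ by Lemma~\ref{lem:more_gamma_positive}, and regroup with $\EE_{2m}^{\pm}(t)$ and $t\,\EE_{2m}^{\mp}(t)$. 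All the bookkeeping checks out (in particular, summing two gamma positive polynomials with a common center of symmetry is again gamma positive, and no separate base case is needed since Theorem~\ref{thm:main_thm_for_Btype_excedance} covers all even $2m\geq 2$). Your approach buys a self-contained proof within this paper and the extra information that the two gamma positive summands have centers of symmetry $m$ and $m+1$; the paper's citation route is shorter but opaque. Notably, the paper itself uses exactly your strategy for the type-D analogue (Theorem~\ref{thm:d_type_exccedance_sum_of_2,for_odd_integers}), so your proof is the natural ``in-house'' version of the omitted argument.
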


\section{Type D Coxeter Group}

Let $\DD_n = \{ \sigma \in \BB_n: |\Negs(\sigma)|\mbox{ is even} \}$
denote the type-D Coxeter group.
For $\pi \in \DD_n$, define as before, $\Negs(\pi) =  \{\pi_i : i > 0, \pi_i < 0 \}$ to be 
the set of elements which occur with a negative sign.  
Define $\inv_D(\pi)=  \inv(\pi) 
+ | \{ 1 \leq i < j \leq n :- \pi_i > \pi_j \} |$. For Type-D Coxeter groups, 
we have the same definition of excedance as in type-B coxeter groups. 
Hence, define 
$\exc_D(\pi)= |\{ i \in [n]: \pi_{|\pi(i)|} > \pi_ i\}|+|\{ i \in [n]: \pi_i =- i\}|$,
$\antiexc_D(\pi)=n-\exc_D(\pi)$ and $\wexc_D(\pi)= |\{ i \in [n]: \pi_{|\pi(i)|} > \pi_ i\}|+
|\{ i \in [n]: \pi_i = i\}|$. Let $\DD^+_n \subseteq \DD_n$ denote the
subset of even length elements of $\DD_n$ and let $\DD_n^- = \DD_n - \DD_n^+$.
Define
\begin{eqnarray}
\label{eqn:Dn} 
   \FF_n(t) & = & \sum_{\pi \in \DD_n} t^{\exc_D(\pi)}  \mbox{ \hspace{2 mm} and \hspace{2 mm}} 
     	\FF_n(s,t)  =  \sum_{\pi \in \DD_n} t^{\exc_D(\pi)}
     	s^{\antiexc_D(\pi)} \\
     	 \label{eqn:DBn-Dn} 
     	 \FFBminusD_n(t) & = & \sum_{\pi \in \BB_n-\DD_n} 
     	 t^{\exc_D(\pi)} \mbox{ \hspace{1 mm} and \hspace{1 mm}} 
     	 \FFBminusD_n(s,t)  =  \sum_{\pi \in \BB_n-\DD_n}
     	 t^{\exc_D(\pi)}
     	 s^{\antiexc_D(\pi)}\  \\ 
     	\label{eqn:DBAn} 
     	\FF_n^+(t) & = & \sum_{\pi \in \DD_n^+} t^{\exc_D(\pi)}  
     	\mbox{ \hspace{3 mm} and \hspace{3 mm}} 
     	\FF_n^+(s,t)  =  \sum_{\pi \in \DD_n^+} t^{\exc_D(\pi)} 
     	s^{\antiexc_D(\pi)}\\  
     	\label{eqn:DBnAn} 
     	\FF_n^-(t) & = & \sum_{\pi \in \DD_n^-}
     	t^{\exc_D(\pi)} 
     	\mbox{ \hspace{3 mm} and \hspace{3 mm}}  
     	\FF_n^-(s,t)  =  \sum_{\pi \in \DD_n^-} t^{\exc_D(\pi)}
     	 s^{\antiexc_D(\pi)}\\
     	\FFsgnE_n(s,t) & = & \sum_{\pi \in \DD_n}
     	(-1)^{\inv_D(\pi)} t^{\exc_D(\pi)} s^{\antiexc_D(\pi)}
\end{eqnarray}
     	
Recall that Brenti showed that type-B excedances
and type-B ascents are equidistributed in $\BB_n$ (Theorem \ref{thm:brenti_fft}).

\begin{remark}
\label{rem:brenti_similar}
It is simple to give a similar bijection $T_n:\BB_n \mapsto \BB_n$ 
such that $\des_B(T_n(\pi))=\exc_B(\pi)$ and $\Negs((T_n(\pi))= \Negs(\pi)$. 
The bijection is very similar to the proof of  
\cite[Theorem 10.2.3]{lothaire-combin-words} and  \cite[Theorem 3.15]{brenti-q-eulerian-94} 
and so we omit it.
\end{remark}

\begin{remark}
\label{rem:brenti_restricts_type-D}
By Remark \ref{rem:brenti_similar}, as the number of negative entries is preserved by $T_n$, 
enumerating Type-B descents over $\DD_n$ is equivalent to enumerating  Type-B excedance over $\DD_n$ 
which is equivalent to enumerating Type-D excedance over $\DD_n$. That is, for all positive integers $n$,  
we have

$$\sum_{\pi \in \DD_{n}}t^{\des_{B}(\pi)}s^{\asc_{B}(\pi)}=
\sum_{\pi \in \DD_{n}}t^{\exc_{D}(\pi)}s^{\antiexc_{D}(\pi)}.$$
\end{remark}
 
Recall the operator $\displaystyle D = \left( \frac{d}{ds} + \frac{d}{dt} \right)$.  We 
prove the following recurrence relations.    
     
\begin{lemma}
\label{lem:rec_D_type_excedance}
For all positive integers $n$, the polynomials $\FF_n(s,t)$ satisfy the following recurrence 
relations.
\begin{eqnarray}
\label{eqn:rec_D_type}\
\FF_n(s,t) & = & s\FF_{n-1}(s,t)+
t\FFBminusD_{n-1}(s,t)+stD
\EE_{n-1}(s,t)\\\label{eqn:rec_B-D}
\FFBminusD_n(s,t) & = & t\FF_{n-1}(s,t)+
s\FFBminusD_{n-1}(s,t)+stD
\EE_{n-1}(s,t)
\end{eqnarray}
\end{lemma}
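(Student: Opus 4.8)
The plan is to prove the two recurrences in Lemma~\ref{lem:rec_D_type_excedance} by the same ``position of the largest letter'' argument used throughout Section~3, now adapted to signed permutations. Given $\pi \in \DD_n$, I will look at where the letter $n$ or $\ob{n}$ sits, i.e. at $k$ with $|\pi_k| = n$, and delete that letter to obtain $\pi' \in \BB_{n-1}$; whether $\pi'$ lands in $\DD_{n-1}$ or in $\BB_{n-1}-\DD_{n-1}$ is governed entirely by the sign of the removed letter, since $|\Negs(\pi')| = |\Negs(\pi)|$ if $\pi_k = n$ and $|\Negs(\pi')| = |\Negs(\pi)| - 1$ if $\pi_k = \ob{n}$. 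This is exactly the mechanism that makes a $\DD$ term and a $(\BB-\DD)$ term both appear on the right-hand side.

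The core of the proof is the book-keeping for $\exc_D$. First I would handle the case $|\pi_n| = n$: if $\pi_n = n$ then $n$ contributes a (weak) excedance that is not an excedance in Brenti's convention only at a fixed point, but here $\pi_n = n$ is a genuine non-excedance-type position in the $\exc_D$ count, and deleting it leaves $\exc_D(\pi') = \exc_D(\pi)$, giving the factor $s$ multiplying $\FF_{n-1}(s,t)$; if $\pi_n = \ob{n}$ then position $n$ has $\pi_n = -n$, which is counted as an excedance by the definition $\exc_D(\pi)= |\{ i : \pi_{|\pi(i)|} > \pi_ i\}|+|\{ i : \pi_i = -i\}|$, so we pick up a factor $t$, and deletion drops us into $\BB_{n-1}-\DD_{n-1}$, producing $t\,\FFBminusD_{n-1}(s,t)$. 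The remaining case, $|\pi_k| = n$ with $1 \le k \le n-1$, is where $n$ (or $\ob{n}$) sits in a non-final slot; here inserting $\pm n$ at position $k$ of an element of $\BB_{n-1}$ always creates exactly one excedance (the position $k$ itself, since $|\pi_k| = n \ge$ everything, in fact $\pi_{|\pi(k)|} > \pi_k$ whenever $\pi_k = n$, and $\pi_k = -k$ is impossible for $n\ge 2$ unless... one must check $\pi_k=\ob n$ is also an excedance-type position), while the excedance/non-excedance status of every other position is unchanged. Summing the contribution of these ``interior'' insertions over $\BB_{n-1}$, with the standard $st D$ operator accounting for the choice of insertion position among $n-1$ slots and the sign of the inserted letter, yields the term $st\,D\,\EE_{n-1}(s,t)$; this is the same computation that produced $\frac12 stDA_{n-1}(s,t)$ in Lemma~\ref{lem:recurrenceforAEXc_n^+} and $stD\EE_{n-1}(s,t)$ in \eqref{rec.Btypeexcedanceeven}, except that now interior insertions do not need to be split between $\DD$ and $\BB - \DD$ because, fixing the deleted configuration, the set of interior insertions of $+n$ and of $\ob n$ together is parity-symmetric, so exactly half land in each and the $D\EE_{n-1}$ (rather than $D\FF_{n-1}$) term is correct for both recurrences. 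Equation~\eqref{eqn:rec_B-D} is proved identically by starting from $\pi \in \BB_n - \DD_n$ and tracking that $\pi_n = n$ now keeps us in $\BB_{n-1}-\DD_{n-1}$ (factor $s$) while $\pi_n = \ob n$ sends us to $\DD_{n-1}$ and contributes $t$ (factor $t\,\FF_{n-1}$).

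I expect the main obstacle to be the interior-insertion bookkeeping: verifying that inserting $\pm n$ at an interior position $k$ changes $\exc_D$ by exactly one regardless of sign and of $k$, and simultaneously changes $\antiexc_D$ by exactly zero (so that only the $t$-degree shifts, matching the $st D$ operator), and then checking that the resulting weighted sum over $\BB_{n-1}$ with $\pos$-statistic collapses to $stD\EE_{n-1}(s,t)$ after the ``half go to $\DD$, half to $\BB-\DD$'' cancellation. A subtlety here is that the statistic $\exc_D$ refers to $\pi_{|\pi(i)|}$, so inserting a letter at position $k$ alters the indices seen at \emph{other} positions too; I would need to argue carefully (as in the type-B proof implicit in Remark~\ref{rem:brenti_similar}) that relabelling the surviving letters to $\{1,\dots,n-1\}$ is excedance-preserving, which reduces everything to the case already understood for $\EE_{n-1}$ and $\FF_{n-1}$. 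Once this lemma is established, substituting $\EEsgnE_n(s,t) = (s-t)^n$ from \eqref{eqn:formulabivariatesignedexcedanceB} and $\FF_n - \FFBminusD_n$ gives a $(s-t)$-recurrence analogous to Corollary~\ref{cor:mantaci_theorem}, which is presumably how the downstream Type-D gamma-positivity result (Theorem~\ref{thm:d_type_exccedance_even_integers}) will be driven.
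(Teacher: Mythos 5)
Your case split (letter $\pm n$ in the last position giving $s\FF_{n-1}+t\FFBminusD_{n-1}$, interior positions giving $stD\EE_{n-1}$) matches the shape of the paper's argument, but you try to run the insertion bookkeeping directly on the excedance statistic, whereas the paper first invokes Remark~\ref{rem:brenti_restricts_type-D} to replace $\sum_{\pi\in\DD_n}t^{\exc_D(\pi)}s^{\nexc_D(\pi)}$ by $\sum_{\pi\in\DD_n}t^{\des_B(\pi)}s^{\asc_B(\pi)}$ (using the negative-set-preserving variant $T_n$ of Brenti's bijection) and only then does the ``where does $\pm n$ sit'' analysis. That transfer is not cosmetic: the operator $stD$ has a clean combinatorial meaning for \emph{descents} (inserting the largest letter into an ascent slot of $\sigma$ raises $\des_B$ by one, into a descent slot raises $\asc_B$ by one, which is exactly $s^a t^d\mapsto a\,s^a t^{d+1}+d\,s^{a+1}t^d$), and it is this positional statistic that the paper manipulates.

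The gap in your version is precisely the interior-insertion step you flag as the ``main obstacle,'' and the partial analysis you give of it is wrong. First, $\exc_D$ is cycle-based: if $\pi_k=n$ with $k<n$, the condition at position $k$ is $\pi_{|\pi(k)|}=\pi_n>n=\pi_k$, which is \emph{never} satisfied, so position $k$ is not the new excedance; the effect of placing $n$ at one-line position $k$ propagates through the cycle structure and is not localized at $k$. Second, your claim that an interior insertion ``changes $\exc_D$ by exactly one regardless of sign and of $k$ and changes $\antiexc_D$ by exactly zero'' cannot be right, because it would produce a term of the form $(n-1)\,t\,\EE_{n-1}(s,t)$ rather than $stD\EE_{n-1}(s,t)$: the operator $stD$ sends $s^a t^e$ to $a\,s^a t^{e+1}+e\,s^{a+1}t^e$, so some insertions must raise the non-excedance count. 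Finally, the ``parity-symmetric, half land in each'' justification for getting $D\EE_{n-1}$ rather than $D\FF_{n-1}$ is not the right mechanism either: what actually happens (and what the paper says) is that interior placements of $+n$ are governed by $\DD_{n-1}$ and interior placements of $\ob{n}$ by $\BB_{n-1}-\DD_{n-1}$, and the two contributions $stD\FF_{n-1}+stD\FFBminusD_{n-1}$ add up to $stD\EE_{n-1}$. To repair your argument you would either have to carry out a genuine cycle-insertion analysis for $\exc_D$, or do what the paper does and pass to $\des_B$ first.
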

     
\begin{proof} 
We prove \eqref{eqn:rec_D_type} first.  Using Remark \ref{rem:brenti_restricts_type-D},
we evaluate
$\sum_{\pi \in \DD_{n}}t^{\des_{B}(\pi)}s^{\asc_{B}(\pi)}$ instead.
Clearly, $s\sum_{\pi \in \DD_{n-1}}t^{\des_{B}(\pi)}s^{\asc_{B}(\pi)}$ is the 
contribution  from $\pi \in \DD_{n}$ in which the letter `$n$' appears in the last position
and $t\sum_{\pi \in\BB_{n-1}-\DD_{n-1}}t^{\des_{B}(\pi)}s^{\asc_{B}(\pi)}$ is the contribution 
from $\pi \in \DD_{n}$ in which the letter $\ob{n}$ appears in the last position. 
The term $\displaystyle stD\sum_{\pi \in \DD_{n-1}}t^{\des_{B}(\pi)}s^{\asc_{B}(\pi)}$ 
accounts for all those permutations $\pi \in \DD_{n}$
in which the letter $n$ appears in all positions except the last position.  Likewise, the term
$\displaystyle stD\sum_{\pi \in \BB_{n-1}-\DD_{n-1}}t^{\des_{B}(\pi)}s^{\asc_{B}(\pi)}$ 
accounts  for all those permutations $\pi \in \DD_{n}$ in which the letter $\ob{n}$ appears 
at either the initial position or in the gaps in the permutations of $\BB_{n}-\DD_{n}$. 
Summing up all these we get 
\begin{eqnarray*}
\sum_{\pi \in \DD_{n}}t^{\des_{B}(\pi)}s^{\asc_{B}(\pi)}
 & = &
      s\sum_{\pi \in \DD_{n-1}}t^{\des_{B}(\pi)}s^{\asc_{B}(\pi)}+
      t\sum_{\pi \in \BB_{n-1}-\DD_{n-1}}t^{\des_{B}(\pi)}
      s^{\asc_{B}(\pi)}\\ 
  & &  + stD\sum_{\pi \in \DD_{n-1}}t^{\des_{B}(\pi)}s^{\asc_{B}(\pi)} + 
      stD\sum_{\pi \in \BB_{n-1}-\DD_{n-1}} t^{\des_{B}(\pi)}s^{\asc_{B}(\pi)}
\end{eqnarray*}
In an identical manner one can prove \eqref{eqn:rec_B-D}.  We omit the details.
\end{proof}

Using Lemma \ref{lem:rec_D_type_excedance}, we next show that  
the polynomials $\FF_{n}(s,t)$ and $\FFBminusD_{n}(s,t)$ are gamma positive.

\begin{theorem}
\label{thm:gamma-nonnegativite_for_D_type_excedance,n_even}
For all positive integers $n \geq 2$, $\EE_n^+(s,t)=\FF_{n}(s,t)$ and 
$\EE_n^-(s,t)=\FFBminusD_{n}(s,t)$. Hence, for all even natural numbers $n$,  
$\FF_{n}(s,t)$	and $\FFBminusD_{n}(s,t)$ are gamma positive polynomials with same center of 
symmetry $n/2$. For all odd positive integers $n \geq 3$, the univariate polynomials 
$\FF_{n}(t)$	and $\FFBminusD_{n}(t)$ can be written as a sum of $2$ gamma positive
polynomials.
\end{theorem}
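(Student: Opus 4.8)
The plan is to first establish the two polynomial identities
$\EE_n^+(s,t) = \FF_n(s,t)$ and $\EE_n^-(s,t) = \FFBminusD_n(s,t)$ for all $n \geq 1$, and then simply transport the type-B conclusions of Theorems \ref{thm:main_thm_for_Btype_excedance} and \ref{thm:sumof2foroddBtypeexcedance} across these identities. Everything conceptual is already done in the type-B section; the task is to recognize that the type-D excedance enumerator over $\DD_n$ (resp. over $\BB_n-\DD_n$) coincides with the type-B excedance enumerator over the even-length part $\BB_n^+$ (resp. the odd-length part $\BB_n^-$), even though ``even number of negatives'' and ``even $\inv_B$'' are quite different conditions.

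To prove the identities I would use a ``sum and difference'' argument. Since $\exc_D$ and $\exc_B$ are given by the same formula on $\BB_n$ and $\DD_n \sqcup (\BB_n - \DD_n) = \BB_n$, one has trivially $\FF_n(s,t) + \FFBminusD_n(s,t) = \EE_n(s,t)$. For the difference, subtract \eqref{eqn:rec_B-D} from \eqref{eqn:rec_D_type} in Lemma \ref{lem:rec_D_type_excedance}: the two $stD\EE_{n-1}(s,t)$ terms cancel exactly, leaving
$$\FF_n(s,t) - \FFBminusD_n(s,t) = (s-t)\bigl(\FF_{n-1}(s,t) - \FFBminusD_{n-1}(s,t)\bigr).$$
Checking the base case $n=1$ directly — $\BB_1$ has two elements, the identity, which lies in $\DD_1$ and contributes $s$, and $\ob{1}$, which lies in $\BB_1-\DD_1$ and contributes $t$ — this recursion unwinds to $\FF_n(s,t) - \FFBminusD_n(s,t) = (s-t)^n$. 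Combining the sum and difference gives $\FF_n(s,t) = \frac12(\EE_n(s,t) + (s-t)^n)$ and $\FFBminusD_n(s,t) = \frac12(\EE_n(s,t) - (s-t)^n)$. On the type-B side, $\EE_n^+(s,t) + \EE_n^-(s,t) = \EE_n(s,t)$ and $\EE_n^+(s,t) - \EE_n^-(s,t) = \EEsgnE_n(s,t) = (s-t)^n$ by \eqref{eqn:formulabivariatesignedexcedanceB}, so $\EE_n^+(s,t) = \frac12(\EE_n(s,t)+(s-t)^n)$ and $\EE_n^-(s,t) = \frac12(\EE_n(s,t)-(s-t)^n)$. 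Comparing, $\FF_n = \EE_n^+$ and $\FFBminusD_n = \EE_n^-$. (Equivalently, one may observe that the coupled recurrences \eqref{eqn:rec_D_type}, \eqref{eqn:rec_B-D} for $(\FF_n, \FFBminusD_n)$ are literally the same as the coupled recurrences \eqref{rec.Btypeexcedanceeven}, \eqref{rec.Btypeexcedanceodd} for $(\EE_n^+, \EE_n^-)$, whence the identities follow by a single induction once the base case $n=1$ agrees.)

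Given the identities, the remaining assertions are immediate. For $n=2m$ even, Theorem \ref{thm:main_thm_for_Btype_excedance} gives that $\EE_n^+(s,t)$ and $\EE_n^-(s,t)$ are gamma positive with center of symmetry $m$, so the same holds for $\FF_n(s,t)$ and $\FFBminusD_n(s,t)$ with center of symmetry $n/2$. For $n \geq 3$ odd, setting $s=1$ yields $\FF_n(t) = \EE_n^+(t)$ and $\FFBminusD_n(t) = \EE_n^-(t)$, and Theorem \ref{thm:sumof2foroddBtypeexcedance} writes each of $\EE_n^+(t), \EE_n^-(t)$ as a sum of two gamma positive polynomials, hence so can $\FF_n(t)$ and $\FFBminusD_n(t)$.

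I do not expect a genuine obstacle: the only points requiring attention are (i) pinning down the base case $n=1$ correctly so that $\FF_n$ is paired with $\EE_n^+$ rather than $\EE_n^-$, and (ii) verifying that the $stD\EE_{n-1}(s,t)$ terms in Lemma \ref{lem:rec_D_type_excedance} really do cancel on subtraction — both are one-line checks. All the substantive work (the recurrences, the formula $\EEsgnE_n(s,t)=(s-t)^n$, and the gamma positivity over $\BB_n^\pm$) has already been carried out earlier in the paper.
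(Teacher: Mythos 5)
Your proposal is correct and follows essentially the same route as the paper: both rest on the observation that the coupled recurrences of Lemma \ref{lem:rec_D_type_excedance} for $(\FF_n,\FFBminusD_n)$ coincide with \eqref{rec.Btypeexcedanceeven}--\eqref{rec.Btypeexcedanceodd} for $(\EE_n^+,\EE_n^-)$, after which Theorems \ref{thm:main_thm_for_Btype_excedance} and \ref{thm:sumof2foroddBtypeexcedance} transfer directly. The only cosmetic difference is that you diagonalize into sum and difference to get the closed forms $\tfrac12\left(\EE_n(s,t)\pm(s-t)^n\right)$ from the base case $n=1$, whereas the paper runs the coupled induction directly from the matching base case $n=2$; your parenthetical already notes this equivalence.
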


\begin{proof}
It is simple to see that $\FF_2(s,t)=s^2+2st+t^2=\EE_2^+(s,t)$  and
$\FFBminusD_{2}(s,t)=4st=\EE_2^-(s,t)$. Further, recurrences 
\eqref{eqn:rec_D_type} and \eqref{eqn:rec_B-D}   for 
$\FF_n(s,t)$ and  $\FFBminusD_{n}(s,t)$ respectively are identical to 
the recurrences \eqref{rec.Btypeexcedanceeven} and \eqref{rec.Btypeexcedanceodd}. 
Thus, the proof of Theorem \ref{thm:main_thm_for_Btype_excedance} and 
Theorem \ref{thm:sumof2foroddBtypeexcedance} works here.
\end{proof}

We next give a recurrence relation satisfied by $\FF_n(s,t)$. 
   
\begin{lemma}
\label{lemma:Dexcnplus4fromDexcn}
For all positive integers $n \geq 2$,
\begin{eqnarray}
\label{eqn:Dexcnplus4fromDexcn}
\FF_{n+4}(s,t) & = & R_1(s,t) \FF_{n}(s,t) + 
 R_2(s,t) \FFBminusD_{n}(s,t)
+ R_3(s,t) D \EE_{n}(s,t) \nonumber \\ 
&  &  +R_4(s,t) D^2 \EE_{n}(s,t) + R_5(s,t) 
  D\EE_{n+1}(s,t) \nonumber  \\
& & + R_6(s,t)D^2\EE_{n+1}(s,t)+    R_7(s,t) 
D^2\EE_{n+2}(s,t)   \label{eqn:jumpfromnton+4} 
\end{eqnarray}  
where the following table lists $R_i(s,t)$ and its center of 
symmetry. 
   
$
\begin{array}{l|r}
R_i(s,t) & \cnos(R_i(s,t)) \\ \hline
R_1(s,t) = (s+t)^4+8st(s+t)^2+16(st)^2  &  2 \\ \hline
R_2(s,t) = 16st(s+t)^2  &  2 \\ \hline 
R_3(s,t) = 4st(s+t)^3+32(st)^2(s+t) &  5/ 2 \\ \hline
R_4(s,t) = 2(st)^2(s+t)^2 +8(st)^3&  3 \\ \hline
R_5(s,t) = 12(st)(s+t)^2 & 2 \\ \hline
R_6(s,t) = 8(st)^2(s+t) & 5/2 \\ \hline
R_7(s,t) = 2(st)^2 & 2  
\end{array}
$
   
Further, let 
\begin{eqnarray}\label{eqn:Sn+4}
S_{n+4}(s,t)&= &R_2(s,t) \FFBminusD_{n}(s,t)
+( R_3(s,t) D +R_4(s,t) D^2) \EE_{n}(s,t) \nonumber \\
 & & + (R_5(s,t) 
  D + R_6(s,t)D^2)\EE_{n+1}(s,t)+    R_7(s,t) 
  D^2\EE_{n+2}(s,t) 
\end{eqnarray}
Then, $S_{n+4}(s,t)$  is gamma positive with center of 
symmetry $\frac{1}{2}(n+4)$ and each gamma 
coefficient of $S_{n+4}(s,t)$ is even.
\end{lemma}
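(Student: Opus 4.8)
The plan is to prove the two assertions of the lemma in turn.

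\textbf{The recurrence \eqref{eqn:jumpfromnton+4}.} I would obtain this by iterating \eqref{eqn:rec_D_type} and \eqref{eqn:rec_B-D} four times. Starting from $\FF_{n+4} = s\FF_{n+3} + t\FFBminusD_{n+3} + stD\EE_{n+3}$ and repeatedly substituting the recurrences for $\FF_{m}$ and $\FFBminusD_{m}$ reduces everything to $\FF_n$, $\FFBminusD_n$ together with derivatives of $\EE_n,\EE_{n+1},\EE_{n+2}$; along the way I would use the type-B recurrence $\EE_{m+1}(s,t) = (s+t)\EE_m(s,t) + 2stD\EE_m(s,t)$ (add \eqref{rec.Btypeexcedanceeven} and \eqref{rec.Btypeexcedanceodd}) to lower the subscripts on the Eulerian terms, and the identity $D(st\,f) = (s+t)f + stDf$ to keep the powers of $D$ under control. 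Collecting the coefficients of $\FF_n,\FFBminusD_n,D\EE_n,D^2\EE_n,D\EE_{n+1},D^2\EE_{n+1},D^2\EE_{n+2}$ produces the seven polynomials $R_1,\dots,R_7$, and the entry $\cnos(R_i)$ in the table is then read off by inspection (each $R_i$ is visibly a palindromic homogeneous polynomial; e.g. $R_3 = 4(st)(s+t)\bigl[(s+t)^2 + 8st\bigr]$, so $\cnos(R_3) = \tfrac{5}{2}$). This is a finite, if somewhat lengthy, manipulation valid as a polynomial identity for every $n \ge 2$, and I would record the outcome rather than display all the substitutions.

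\textbf{Gamma positivity and evenness of $S_{n+4}$.} The relevant case is $n$ even, so that $\FFBminusD_n = \EE_n^-$ is itself gamma positive with center $n/2$ (by Theorem \ref{thm:main_thm_for_Btype_excedance} together with the identity $\FFBminusD_n = \EE_n^-$), and this is the case that feeds into Theorem \ref{thm:d_type_exccedance_even_integers}. There are three ingredients. First, each $R_i$ with $2 \le i \le 7$ is gamma positive with the stated center and, crucially, \emph{all of its gamma coefficients are even}: indeed $R_2 = 16(st)(s+t)^2$, $R_3 = 4(st)(s+t)^3 + 32(st)^2(s+t)$, $R_4 = 2(st)^2(s+t)^2 + 8(st)^3$, $R_5 = 12(st)(s+t)^2$, $R_6 = 8(st)^2(s+t)$, $R_7 = 2(st)^2$, each of which has $\Gamma$-coordinates divisible by $2$. (Only $R_1 = (s+t)^4 + 8st(s+t)^2 + 16(st)^2$ fails this, because of its leading $(s+t)^4$ — which is precisely why $S_{n+4}$ omits the $R_1\FF_n$ summand.) Second, $\FFBminusD_n = \EE_n^-$ (for even $n$) and the type-B Eulerian polynomials $\EE_n,\EE_{n+1},\EE_{n+2}$ (gamma positive by Chow \cite{chow-certain_combin_expansions_eulerian} and Petersen \cite{petersen-enriched_p_partitions_peak_algebras}) are palindromic with nonnegative integer coefficients, hence have integer gamma coefficients, the change-of-basis matrix between the monomial basis and $\Gamma$ being unitriangular over $\ZZ$; moreover $D$ sends a polynomial with integer gamma coefficients to one with integer gamma coefficients, since $D\bigl[(st)^i(s+t)^{N-2i}\bigr] = i(st)^{i-1}(s+t)^{N-2i+1} + 2(N-2i)(st)^i(s+t)^{N-2i-1}$ is a $\ZZ$-combination of $\Gamma$-basis elements of degree $N-1$. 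Third, by Lemma \ref{lem:derivative_gamma_nonneg} and Corollary \ref{cor:gamma_nonneg} the polynomials $D\EE_n, D^2\EE_n, D\EE_{n+1}, D^2\EE_{n+1}, D^2\EE_{n+2}$ are gamma positive with integer gamma coefficients and respective centers $\tfrac{n-1}{2}, \tfrac{n-2}{2}, \tfrac{n}{2}, \tfrac{n-1}{2}, \tfrac{n}{2}$, so applying Lemma \ref{lem:prod_bivariate_gamma_nonneg} (with $\cnos$ additive under products) to each of the six summands $R_2\FFBminusD_n$, $R_3 D\EE_n$, $R_4 D^2\EE_n$, $R_5 D\EE_{n+1}$, $R_6 D^2\EE_{n+1}$, $R_7 D^2\EE_{n+2}$ shows that each is gamma positive with center exactly $\tfrac{n+4}{2}$. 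Finally, since $(st)^j(s+t)^a \cdot (st)^k(s+t)^b = (st)^{j+k}(s+t)^{a+b}$, the $\Gamma$-coordinates of a product are the convolution of those of the factors; in each summand one factor ($R_i$) has only even $\Gamma$-coordinates while the other has integer ones, so each summand has even $\Gamma$-coordinates, and hence so does $S_{n+4}$, a sum of six polynomials all sharing the center $\tfrac{n+4}{2}$. This gives the claim.

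The only genuinely delicate point is the alignment of centers in the third ingredient: the polynomials $R_2,\dots,R_7$ are exactly the ones for which, after the derivative shifts, all six summands land on the single center of symmetry $(n+4)/2$ — if any $R_i$ were off by so much as a half, the sum would fail even to be palindromic. Once that matching is in place the evenness is essentially automatic, being inherited from the factor of $2$ present in every $R_i$ with $i \ge 2$. I expect the most error-prone step in writing this up to be the four-fold iteration that pins down the precise form of $R_1,\dots,R_7$, and I would sanity-check it against $n = 2$, where $\FF_6(s,t)$ and all the terms on the right-hand side can be computed by hand.
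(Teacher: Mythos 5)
Your proposal is correct and follows essentially the same route as the paper: the paper also obtains \eqref{eqn:Dexcnplus4fromDexcn} by iterating the recurrences of Lemma \ref{lem:rec_D_type_excedance} (it applies them twice to get a jump-by-two identity and then substitutes once more, which is the same computation you describe), and then deduces gamma positivity and evenness of $S_{n+4}(s,t)$ from the table of centers of the $R_i$. Your write-up is in fact slightly more careful than the paper's on two points worth keeping: you note explicitly that $R_1$ (the one coefficient polynomial with an odd gamma coefficient) is precisely the term excluded from $S_{n+4}$, and that the gamma positivity of the $R_2\,\mathsf{(B\mbox{-}D)Exc}_n$ summand requires $n$ even, which is the only case used downstream.
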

   
\begin{proof}
Applying Lemma \ref{lem:rec_D_type_excedance} twice, we get 
\begin{eqnarray}
\label{eqn:Dtypejumpby2}
 \FF_{n+4}(s,t) & = & (s+t)^2 \FF_{n+2}(s,t) + 4st \FFBminusD_{n+2}(s,t)
  + 4st(s+t) D \EE_{n+2}(s,t) \nonumber \\ 
  &  &  +2(st)^2 D^2 \EE_{n+2}(s,t) 
\end{eqnarray}
Further, 
\begin{eqnarray}
\FF_{n+2}(s,t) 
& = & (s+t)^2 \FF_{n}(s,t) + 4st \FFBminusD_{n}(s,t) + 4st(s+t) D \EE_{n}(s,t) \nonumber \\ 
&  &  +2(st)^2 D^2 \EE_{n}(s,t)   \label{eqn:1}  \\
\FFBminusD_{n+2(s,t)} 
&=&  4st \FF_{n}(s,t) + (s+t)^2 \FFBminusD_{n}(s,t) + 4st(s+t) D \EE_{n}(s,t) \nonumber \\ 
&  &  +2(st)^2 D^2 \EE_{n}(s,t)    \label{eqn:2} \\
D \EE_{n+2}(s,t)
&=& 2(s+t) \EE_{n}(s,t)+ 4st D\EE_{n}(s,t) +3(s+t)D\EE_{n+1}(s,t) \nonumber\\
& & + 2stD^2\EE_{n+1}(s,t)  \label{eqn:3}
\end{eqnarray}

We get \eqref{eqn:Dexcnplus4fromDexcn} by plugging in 
\eqref{eqn:1}, \eqref{eqn:2} and  \eqref{eqn:3}
in \eqref{eqn:Dtypejumpby2}.
Further, note that $S_{n+4}(s,t)$ is a sum of $6$ gamma 
positive polynomials, each with center of symmetry $\frac{1}{2}(n+4)$.   
Further, from the table, each $R_i(s,t)$ clearly has even gamma 
coefficients and hence, $S_{n+4}(s,t)$ also has even gamma 
coefficients.  
\end{proof}

Sivasubramanian in \cite{siva-weyl_linear} showed the following.

\begin{theorem}
\label{formula_uniivariate_signed_excedance}
 For all positive integers $n$,  
$\FFsgnE_n(t)=\begin{cases}
(1-t)^n & \text {if $n$ even }.\\
(1-t)^{n-1} & \text {if $n$ odd }.
\end{cases}$
\end{theorem}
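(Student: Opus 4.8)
The plan is to replace the sum over $\DD_n$ by a sign-twisted sum over all of $\BB_n$, turn this into a recurrence, and solve the recurrence using that the operator $D$ annihilates every power of $s-t$. I would first record two elementary facts. For $\pi\in\BB_n$ write $|\pi|\in\SSS_n$ for the underlying unsigned permutation. From the definitions, $\inv_B(\pi)-\inv_D(\pi)=|\Negs(\pi)|$, and a pairwise parity check gives $\inv_D(\pi)\equiv\inv(|\pi|)$ modulo $2$, so $(-1)^{\inv_D(\pi)}=\sgn(|\pi|)$; also $\exc_D=\exc_B$ and $\antiexc_D=\antiexc_B$ on all of $\BB_n$. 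Writing the indicator of $\DD_n\subseteq\BB_n$ as $\tfrac12\bigl(1+(-1)^{|\Negs(\pi)|}\bigr)$, using $(-1)^{\inv_D(\pi)+|\Negs(\pi)|}=(-1)^{\inv_B(\pi)}$, and invoking \eqref{eqn:formulabivariatesignedexcedanceB}, I get
\[
\FFsgnE_n(s,t)=\tfrac12\bigl((s-t)^n+R_n(s,t)\bigr),\qquad R_n(s,t):=\sum_{\pi\in\BB_n}\sgn(|\pi|)\,t^{\exc_D(\pi)}s^{\antiexc_D(\pi)}.
\]

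I would then establish the recurrence $R_n(s,t)=(s+t)R_{n-1}(s,t)-2st\,DR_{n-1}(s,t)$ (with $R_1(s,t)=s+t$) by inserting the letters $\pm n$. Deleting $\pm n$ from $\pi\in\BB_n$ yields $\pi'\in\BB_{n-1}$. When $\pm n$ occupies the last position, $\sgn(|\pi|)=\sgn(|\pi'|)$, while $n$ leaves $\exc_D$ fixed (raising $\antiexc_D$ by one) and $\bar n$ raises $\exc_D$ by one (leaving $\antiexc_D$ fixed); these cases contribute $(s+t)R_{n-1}(s,t)$. When $\pm n$ lies in a position $p<n$, in both sign cases $\sgn(|\pi|)=(-1)^{n-p}\sgn(|\pi'|)$, because $|\pi|$ only sees that the largest letter was inserted at position $p$; moreover $n$ and $\bar n$ at a fixed $p<n$ produce the \emph{same} $\exc_D$ (the contribution $0$ versus $1$ at position $p$ is exactly offset at the position carrying $\pm p$). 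Combining this with the standard bookkeeping of how $\exc_D$ changes as the largest letter runs through the internal slots of a fixed word --- the bookkeeping underlying Brenti's type-B recurrence $\EE_n=(s+t)\EE_{n-1}+2st\,D\EE_{n-1}$ --- and summing these internal contributions against the extra weight $(-1)^{n-p}$, one finds they amount to $-2st\,DR_{n-1}(s,t)$; the sign opposite to type B is precisely this weight $(-1)^{n-p}$.

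Solving the recurrence is a straightforward induction on $n$, since $D(s-t)^k=0$: for $n$ odd, $R_n=(s+t)R_{n-1}=(s+t)(s-t)^{n-1}$, and for $n$ even, $R_n=(s+t)^2(s-t)^{n-2}-4st(s-t)^{n-2}=(s-t)^n$. Hence $\FFsgnE_n(s,t)=(s-t)^n$ for $n$ even and $\FFsgnE_n(s,t)=\tfrac12\bigl((s-t)^n+(s+t)(s-t)^{n-1}\bigr)=s(s-t)^{n-1}$ for $n$ odd; setting $s=1$ gives $\FFsgnE_n(t)=(1-t)^n$ for $n$ even and $(1-t)^{n-1}$ for $n$ odd. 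The delicate point, which I would spend the most care on, is the internal-insertion term: once the weight $(-1)^{n-p}$ is present the internal slots of a single word no longer cancel slot-by-slot as in type B, so a genuine cancellation after summing over the underlying words is required --- I expect the cleanest route is to pair adjacent slots $p$ and $p+1$ (an adjacent transposition of the largest letter flips $\sgn(|\pi|)$) and follow the induced change in $\exc_D$, in the style of a Foata--Strehl valley-hopping argument.
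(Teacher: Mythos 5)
First, a point of comparison: the paper does not actually prove this theorem --- it is quoted from \cite{siva-weyl_linear}, and the bivariate version is likewise asserted with the proof omitted --- so your attempt can only be judged on its own merits. Much of it holds up. The parity identities $\inv_B(\pi)-\inv_D(\pi)\equiv|\Negs(\pi)|\pmod 2$ and $(-1)^{\inv_D(\pi)}=\sgn(|\pi|)$ are both correct (the latter by the four-case comparison of a pair $i<j$ according to the signs of $\pi_i,\pi_j$), so together with $\EEsgnE_n(s,t)=(s-t)^n$ your reduction $\FFsgnE_n(s,t)=\tfrac12\bigl((s-t)^n+R_n(s,t)\bigr)$ is valid. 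Your treatment of insertion at the last position, your observation that $+n$ and $-n$ at a fixed internal position $p$ produce the same $\exc_B$ (the gain at $p$ being offset at the position holding $\pm p$) and both carry the sign $(-1)^{n-p}\sgn(|\pi'|)$, and the final induction using $D(s-t)^k=0$ are all correct; I have also checked that the claimed recurrence and closed forms for $R_n$ agree with direct computation for $n\le 3$.

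The gap is the internal-insertion term, and it is the heart of the argument. There is no ``standard bookkeeping'' for how Brenti's $\exc_B$ changes when the largest letter is placed at an internal position of the one-line word: inserting $\pm n$ at position $p<n$ shifts every entry in positions $\ge p$ one step to the right, which simultaneously alters the condition $\pi_i=-i$ at all shifted positions and changes which entry the lookup $\pi_{|\pi_i|}$ points to, so the excedance count of the inserted permutation is not a function of $\exc_B(\pi')$ and $p$ alone. The type-B recurrence $\EE_n=(s+t)\EE_{n-1}+2st\,D\EE_{n-1}$ that you invoke as a template is established on the \emph{descent} side, where positional insertion is clean, and then transported to excedances via equidistribution --- a route closed to you here, since no known bijection between the two statistics respects $\sgn(|\pi|)$ (the paper itself notes that Brenti's bijection does not even respect the parity of $\inv_B$). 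You concede that the identity ``internal insertions $=-2st\,DR_{n-1}(s,t)$'' requires a cancellation across different underlying words and only conjecture a pairing strategy for it; until that cancellation is actually exhibited, the central recurrence, and hence the theorem, is unproved. A cycle-based insertion of $\pm n$ (for which Brenti's excedance changes in a controlled way) or a direct sign-reversing involution on $\DD_n$ is likely a more tractable way to close this step than adjacent-slot pairing in one-line notation.
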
 
A slight modification of this result gives us the following
bivariate version of Theorem \ref{formula_uniivariate_signed_excedance}.
Since the proof is identical, we omit the details.

\begin{theorem}
\label{formula_bivariate_signed_excedance}
 For all positive integers $n$,  
$\FFsgnE_n(s,t)=\begin{cases}
(s-t)^n & \text {if $n$ even }.\\
s(s-t)^{n-1} & \text {if $n$ odd }.
\end{cases}$
\end{theorem}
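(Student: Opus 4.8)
The plan is to mirror the proof of Theorem \ref{formula_uniivariate_signed_excedance} but carry the extra indeterminate $s$ along throughout, since the bivariate polynomial $\FFsgnE_n(s,t)$ is a homogeneous refinement of $\FFsgnE_n(t)$ recovered by setting $s=1$. First I would set up the recurrence: subtracting \eqref{eqn:rec_B-D} from \eqref{eqn:rec_D_type} gives
\begin{eqnarray*}
\FF_n(s,t) - \FFBminusD_n(s,t) &=& (s-t)\bigl(\FF_{n-1}(s,t) - \FFBminusD_{n-1}(s,t)\bigr).
\end{eqnarray*}
Now I must relate $\FF_n(s,t) - \FFBminusD_n(s,t)$ to $\FFsgnE_n(s,t)$. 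Since adding a single negative letter flips the parity of $\inv_D$ (this is the role played by $\inv_B$ in the proof of Corollary \ref{cor:mantaci_theorem}), the elements of $\DD_n$ contribute $+1$ and those of $\BB_n - \DD_n$ contribute $-1$ to $(-1)^{\inv_D(\cdot)}$, so that indeed $\FFsgnE_n(s,t) = \FF_n(s,t) - \FFBminusD_n(s,t)$; I would need to check carefully that $\exc_D$ and $\antiexc_D$ are the correct exponents on both sides, which they are by definition of the polynomials in \eqref{eqn:Dn}--\eqref{eqn:DBnAn}. This yields the clean one-term recurrence $\FFsgnE_n(s,t) = (s-t)\FFsgnE_{n-1}(s,t)$.

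Next I would iterate this recurrence down to a base case, but here is the subtlety that produces the case split: the recurrence is valid only down to where $\DD_n$ and $\BB_n - \DD_n$ are both honestly defined, and one must stop the descent at an appropriate small $n$. The natural base case is $n=1$, where $\DD_1$ consists of the single identity (no negative letters allowed in even number, so only $+1$) and $\BB_1 - \DD_1 = \{\ob{1}\}$; here $\exc_D(\id) = 0$ but $\exc_D(\ob{1}) = 1$ (as $\pi_1 = -1$ contributes via the second term $|\{i : \pi_i = -i\}|$), giving $\FFsgnE_1(s,t) = s - t$, and $\inv_D(\ob 1)=0$ in type $D$ so actually both contribute $+1$, forcing a direct check at $n=1$ and $n=2$. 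Iterating $\FFsgnE_n(s,t) = (s-t)\FFsgnE_{n-1}(s,t)$ from these base values then gives $\FFsgnE_n(s,t) = (s-t)^n$ when the parity works out and $\FFsgnE_n(s,t) = s(s-t)^{n-1}$ in the odd case, exactly as in the univariate statement; the asymmetry between $n$ even and $n$ odd traces back to the fact that in type $D$ the sign statistic behaves differently on the ``first'' coordinate, precisely the phenomenon already recorded in Theorem \ref{formula_uniivariate_signed_excedance}.

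The main obstacle I anticipate is not the algebra of the recurrence — that is a one-line subtraction of \eqref{eqn:rec_D_type} and \eqref{eqn:rec_B-D} — but rather the bookkeeping at the base of the induction: verifying that $(-1)^{\inv_D(\pi)}$ really does separate $\DD_n$ from $\BB_n - \DD_n$ with the signs lining up correctly, and pinning down the small cases where the parity of $n$ first matters. Since the excerpt explicitly states that ``the proof is identical'' to that of Theorem \ref{formula_uniivariate_signed_excedance}, I would lean on that: the only new content is that every step of that proof is homogeneous in $(s,t)$, so replacing $1-t$ by $s-t$ and $1$ by $s$ throughout is automatic, and the two cases in the conclusion come over verbatim with $s$ reinstated.
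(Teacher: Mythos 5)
There is a genuine gap, and it sits at the heart of your argument: the identification $\FFsgnE_n(s,t)=\FF_n(s,t)-\FFBminusD_n(s,t)$ is false. By definition, $\FFsgnE_n(s,t)$ is a sum over $\DD_n$ \emph{only}, weighted by $(-1)^{\inv_D(\pi)}$ where $\inv_D$ is the type-D length statistic; whereas $\FF_n(s,t)-\FFBminusD_n(s,t)$ is a signed sum over \emph{all} of $\BB_n$, with sign given by the parity of $|\Negs(\pi)|$. These are different index sets and different sign statistics. The discrepancy is already visible at $n=1$: $\DD_1=\{1\}$, so $\FFsgnE_1(s,t)=s$, while $\FF_1(s,t)-\FFBminusD_1(s,t)=s-t$. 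Worse, the one-step recurrence $\FFsgnE_n(s,t)=(s-t)\FFsgnE_{n-1}(s,t)$ that you extract cannot hold for the actual polynomials no matter how you choose the base case, because the claimed closed forms do not satisfy it across a parity change: $(s-t)\cdot s(s-t)^{n-2}=s(s-t)^{n-1}\neq(s-t)^n$ and $(s-t)\cdot(s-t)^{n-1}=(s-t)^n\neq s(s-t)^{n-1}$. So the even/odd split can never emerge from a homogeneous recurrence of the form $X_n=(s-t)X_{n-1}$; your subtraction of \eqref{eqn:rec_D_type} and \eqref{eqn:rec_B-D} correctly shows that $\FF_n(s,t)-\FFBminusD_n(s,t)=(s-t)^n$ for all $n$, but that is a statement about a different signed enumerator, which happens to agree with $\FFsgnE_n(s,t)$ only when $n$ is even.

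The intended (omitted) proof is much shorter and does not need any recurrence. Since $\antiexc_D(\pi)=n-\exc_D(\pi)$ for every $\pi\in\DD_n$, the bivariate polynomial is the degree-$n$ homogenization of the univariate one: $\FFsgnE_n(s,t)=s^n\,\FFsgnE_n(t/s)$. Substituting the formulas of Theorem \ref{formula_uniivariate_signed_excedance} gives $s^n(1-t/s)^n=(s-t)^n$ for $n$ even and $s^n(1-t/s)^{n-1}=s(s-t)^{n-1}$ for $n$ odd. (Equivalently, whatever sign-reversing argument proves the univariate theorem preserves $\exc_D$ and hence $\antiexc_D$, so it carries $s$ along for free.) Your instinct that ``every step is homogeneous in $(s,t)$'' is the right one; the error is in trying to manufacture a new proof via $\FF_n-\FFBminusD_n$ rather than homogenizing the cited result.
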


We use Theorem \ref{formula_bivariate_signed_excedance} twice to 
get the following.
 
\begin{corollary} 
\label{cor:sgn_exc_typed_earlier}
For positive integers $n$, we have 
$\FFsgnE_{n+4}(s,t)=(s-t)^4\FFsgnE_{n}(s,t).$
\end{corollary}

We now come to the main Theorem of this section.
\begin{theorem}
\label{thm:d_type_exccedance_even_integers}.
For all even positive integers $n \geq 4$, $\FF_n^+(s,t)$ and $\FF_n^-(s,t)$ 
are gamma positive with center of symmetry $\frac{1}{2}n$. 
\end{theorem}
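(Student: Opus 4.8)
(Sketch.)
The plan is to prove gamma positivity of the bivariate polynomials $\FF_n^+(s,t)$ and $\FF_n^-(s,t)$ for even $n\ge 4$ by induction with step $4$, feeding the jump relation of Lemma~\ref{lemma:Dexcnplus4fromDexcn} into Corollary~\ref{cor:sgn_exc_typed_earlier}. The asserted center of symmetry $n/2$ then comes for free: for even $n$ both $\FF_n(s,t)$ (gamma positive with center $n/2$ by Theorem~\ref{thm:gamma-nonnegativite_for_D_type_excedance,n_even}) and $\FFsgnE_n(s,t)=(s-t)^n$ (Theorem~\ref{formula_bivariate_signed_excedance}) are palindromic with center $n/2$, while $\FF_n^{\pm}(s,t)=\frac{1}{2}\bigl(\FF_n(s,t)\pm\FFsgnE_n(s,t)\bigr)$ because $\DD_n=\DD_n^+\sqcup\DD_n^-$ and $(-1)^{\inv_D}$ equals $+1$ on $\DD_n^+$ and $-1$ on $\DD_n^-$. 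I would also record the shortcut that, combining $\FF_n(s,t)=\EE_n^+(s,t)$ with $\EEsgnE_n(s,t)=(s-t)^n$, one gets $\FF_n^-(s,t)=\frac{1}{2}\EE_n^-(s,t)$ for even $n$, so $\FF_n^-(s,t)$ is already gamma positive for every even $n\ge 2$ by Theorem~\ref{thm:main_thm_for_Btype_excedance}; I nevertheless keep $\FF_n^-(s,t)$ in the inductive hypothesis, because it is what drives the inductive step for $\FF_n^+(s,t)$.

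First I would settle the base cases $n=4$ and $n=6$ by direct computation --- for instance by running the recurrences of Lemma~\ref{lem:rec_D_type_excedance} from $\FF_2(s,t)=s^2+2st+t^2$ and $\FFBminusD_2(s,t)=4st$, or from $\FF_n(s,t)=\frac{1}{2}(\EE_n(s,t)+(s-t)^n)$, splitting by sign using $\FFsgnE_n(s,t)=(s-t)^n$ --- and exhibiting the two $\Gamma$-expansions in each case. Note the induction must begin at $n=4$, not $n=2$, since $\FF_2^+(s,t)=s^2+t^2=(s+t)^2-2st$ is not gamma positive.

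For the inductive step fix even $n\ge 8$, put $m=n-4\ge 4$, and assume $\FF_m^+(s,t)$ and $\FF_m^-(s,t)$ are gamma positive with center $m/2$; then $\FF_m(s,t)=\FF_m^+(s,t)+\FF_m^-(s,t)$ is gamma positive with center $m/2$, and its $\Gamma$-coefficients dominate, coordinatewise, those of $\FF_m^+(s,t)$ and of $\FF_m^-(s,t)$. Using $\FFsgnE_m=2\FF_m^+-\FF_m$, Lemma~\ref{lemma:Dexcnplus4fromDexcn} as $\FF_n=R_1\FF_m+S_n$, Corollary~\ref{cor:sgn_exc_typed_earlier} as $\FFsgnE_n=(s-t)^4\FFsgnE_m$, and the identity $R_1(s,t)-(s-t)^4=16st(s+t)^2$ (immediate from the tabulated $R_1$, since $(s-t)^4=(s+t)^4-8st(s+t)^2+16(st)^2$), I obtain
\[
\FF_n^+(s,t)=\frac{1}{2}\bigl(\FF_n+\FFsgnE_n\bigr)=8st(s+t)^2\,\FF_m(s,t)+\frac{1}{2}S_n(s,t)+(s-t)^4\,\FF_m^+(s,t),
\]
and the same identity with $\FF_m^+$ replaced by $\FF_m^-$ for $\FF_n^-(s,t)$. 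Expanding $(s-t)^4=(s+t)^4-8st(s+t)^2+16(st)^2$, writing $\FF_m^+(s,t)=\sum_i c_i(st)^i(s+t)^{m-2i}$ and $\FF_m(s,t)=\sum_i d_i(st)^i(s+t)^{m-2i}$ with $d_i\ge c_i\ge 0$, and using that $S_n(s,t)$ is gamma positive with center $n/2$, the $\Gamma$-coefficient of $\FF_n^+(s,t)$ at index $j$ (relative to center $n/2$) equals
\[
8d_{j-1}+\frac{1}{2}\gamma_j(S_n)+c_j-8c_{j-1}+16c_{j-2}=\frac{1}{2}\gamma_j(S_n)+c_j+16c_{j-2}+8\bigl(d_{j-1}-c_{j-1}\bigr)\ \ge\ 0,
\]
every summand being nonnegative (the last because $\FF_m-\FF_m^+=\FF_m^-$ is gamma positive). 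The identical computation, with the role of $\FF_m^+$ played by $\FF_m^-$ and the bound $d_i\ge\gamma_i(\FF_m^-)\ge 0$, gives gamma positivity of $\FF_n^-(s,t)$; all terms involved have center $(m+4)/2=n/2$, so the induction closes.

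The step I expect to be the crux is this algebraic assembly: combining Lemma~\ref{lemma:Dexcnplus4fromDexcn} with Corollary~\ref{cor:sgn_exc_typed_earlier} into the displayed recurrence for $\FF_n^{\pm}(s,t)$, and noticing that after substituting $R_1=(s-t)^4+16st(s+t)^2$ the only non--gamma--positive leftover is $(s-t)^4\FF_m^{+}(s,t)$, whose lone negative contribution $-8c_{j-1}$ at level $j$ is absorbed by the $8d_{j-1}$ coming from $8st(s+t)^2\FF_m(s,t)$ precisely because $\FF_m=\FF_m^++\FF_m^-$ is a sum of gamma positive polynomials (so $d_{j-1}\ge c_{j-1}$). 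The remaining ingredients --- the two base cases, the centers of the $R_i(s,t)$ (already tabulated), and the homogeneity bookkeeping --- are routine.
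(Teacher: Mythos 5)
Your proposal is correct and takes essentially the same route as the paper: induction in steps of four with base cases $n=4,6$, combining Lemma~\ref{lemma:Dexcnplus4fromDexcn} with Corollary~\ref{cor:sgn_exc_typed_earlier} through the splitting $\FF_n^{\pm}(s,t)=\frac{1}{2}\left(\FF_n(s,t)\pm\FFsgnE_n(s,t)\right)$. The only (cosmetic) difference is in the final grouping: the paper writes the step as $\frac{1}{2}[R_1(s,t)+(s-t)^4]\FF_m^{\pm}(s,t)+\frac{1}{2}[R_1(s,t)-(s-t)^4]\FF_m^{\mp}(s,t)+\frac{1}{2}S_n(s,t)$ with $R_1(s,t)+(s-t)^4=2(s+t)^4+32(st)^2$ and $R_1(s,t)-(s-t)^4=16st(s+t)^2$ both gamma positive, so every summand is manifestly gamma positive and your coefficientwise absorption of $-8c_{j-1}$ by $8d_{j-1}$ becomes unnecessary.
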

  
\begin{proof}
We induct on $n$ with the base cases being $n=4$ and $n=6$.
When $n=4$ and $n=6$ one can  check the following.
\begin{eqnarray*}
\FF_{4}^{+}(s,t) & = & s^4+16s^3t+62s^2t^2+16st^3+t^4  
  		=  (s+t)^4+12st(s+t)^2+32(st)^2, \\
\FF_{4}^{-}(s,t) & = &20s^3t+56s^2t^2+20st^3
  		= 20st(s+t)^2+16(st)^2, \label{eqn:dm+4_jump}\\
\FF_{6}^{+}(s,t) & = &  s^6+176s^5t+2647s^4t^2+
  		5872s^3t^3+2647s^2t^4+176st^5+t^6 \\ 
  		& = &(s+t)^6+170st(s+t)^4+1952(st)(s+t)^2+928(st)^3 , \\ 
\FF_{6}^{-}(s,t) & = & 182s^5t+2632s^4t^2+5892s^3t^3+
  		2632s^2t^4+182st^5 , \\
  		& = & 182st(s+t)^4+1904(st)^2(s+t)^2+992(st)^3. 
\end{eqnarray*}

By \eqref{eqn:jumpfromnton+4} and Corollary \ref{cor:sgn_exc_typed_earlier},
 \begin{eqnarray}
 \label{eqn:recdplus}
  \FF_{n+4}^+(s,t)&=& \frac{1}{2}\left(	\FF_{n+4}(s,t)+ \FFsgnE_{n+4}(s,t)\right) \nonumber \\
 	&=& \frac{1}{2}\left(R_1(s,t)\FF_{n}(s,t)+S_{n+4}(s,t) +(s-t)^4\FFsgnE_{n}(s,t)\right) \nonumber \\
  	&=&\frac{1}{2}\left([R_1(s,t)+(s-t)^4]\FF_{n}^+(s,t)\right)+ \frac{1}{2}\left([R_1(s,t)-(s-t)^4]
  	\FF_{n}^-(s,t)\right) \nonumber \\ 
  	& & +\frac{1}{2}S_{n+4}(s,t), \\
  	\label{eqn:recdminus}
  \FF_{n+4}^-(s,t)&=& \frac{1}{2}\left(	\FF_{n+4}(s,t)- \FFsgnE_{n+4}(s,t)\right) \nonumber \\
  &=&\frac{1}{2}\left([R_1(s,t)-(s-t)^4]\FF_{n}^+(s,t)\right)+ \frac{1}{2}\left([R_1(s,t)+(s-t)^4]
  \FF_{n}^-(s,t)\right) \nonumber \\ 
  & & +\frac{1}{2}S_{n+4}(s,t)
\end{eqnarray}

It is simple to see that $R_1(s,t)+(s-t)^4=2(s+t)^4+32(st)^2$ 
and $R_1(s,t)-(s-t)^4=16st(s+t)^2$ are both gamma positive with 
center of symmetry $2$ and both have even gamma coefficients.  
Let $n$  be even with $n >7$. By induction, both $\FF_{n}^{+}(s,t)$ and $\FF_{n}^{-}(s,t)$ 
are gamma positive with the same center of symmetry $\frac{1}{2}n$. 
Further, each of the three terms of \eqref{eqn:recdplus}
 have the same center of symmetry $\frac{1}{2}(n+4)$. Thus by 
Lemma \ref{lemma:Dexcnplus4fromDexcn},   
the polynomial $\FF_{n+4}^+(s,t)$ is gamma positive with center 
of symmetry $\frac{1}{2}(n+4)$. In an identical manner, one can prove that 
$\FF_{n+4}^-(s,t)$ is gamma positive with center of symmetry $\frac{1}{2}(n+4)$. 
\end{proof} 

\begin{theorem}
\label{thm:d_type_exccedance_sum_of_2,for_odd_integers}
For all odd positive integers $n \geq 5$, $\FF_n^+(t)$ and $\FF_n^-(t)$ 
can be written as sum of two gamma positive polynomials. 
\end{theorem}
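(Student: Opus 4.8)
The plan is to run the same "jump by $4$" induction that was used for the even case in Theorem \ref{thm:d_type_exccedance_even_integers}, but now starting the induction from odd $n$ and keeping track of the fact that for odd $n$ the signed excedance polynomial $\FFsgnE_n(s,t)$ equals $s(s-t)^{n-1}$ rather than $(s-t)^n$, so it is \emph{not} palindromic. Consequently $\FF_n^+(s,t)$ and $\FF_n^-(s,t)$ will not individually be palindromic, but will each split into a sum of two gamma positive pieces with centers of symmetry differing by one — exactly the phenomenon already handled by Lemma \ref{lem:more_gamma_positive} and used in the proof of Theorem \ref{thm:sumof2forevenAtype}.

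Concretely, I would first establish the base cases $n=5$ and $n=7$ by direct computation: write out $\FF_5^{\pm}(s,t)$ and $\FF_7^{\pm}(s,t)$, use $\FF_n^{\pm}(s,t) = \tfrac12(\FF_n(s,t) \pm \FFsgnE_n(s,t)) = \tfrac12(\FF_n(s,t) \pm s(s-t)^{n-1})$, and check that each is a sum of two gamma positive polynomials whose centers of symmetry differ by one. (Here $\FF_n(s,t)$ itself is gamma positive with center $n/2$ by Theorem \ref{thm:gamma-nonnegativite_for_D_type_excedance,n_even} combined with Lemma \ref{lem:more_gamma_positive}, or one can just verify the split directly.) For the inductive step with $n$ odd, $n > 7$, set $m = n-4$ and apply the recurrence \eqref{eqn:jumpfromnton+4} together with $\FFsgnE_{n}(s,t) = (s-t)^4 \FFsgnE_{m}(s,t) = s(s-t)^4 (s-t)^{m-1}$ from Corollary \ref{cor:sgn_exc_typed_earlier}. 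As in \eqref{eqn:recdplus} and \eqref{eqn:recdminus}, this gives
\begin{eqnarray*}
\FF_{n}^+(s,t) &=& \tfrac12\left([R_1(s,t)+ (s-t)^4]\FF_m^+(s,t)\right) + \tfrac12\left([R_1(s,t)-(s-t)^4]\FF_m^-(s,t)\right) + \tfrac12 S_{n}(s,t), \\
\FF_{n}^-(s,t) &=& \tfrac12\left([R_1(s,t)-(s-t)^4]\FF_m^+(s,t)\right) + \tfrac12\left([R_1(s,t)+(s-t)^4]\FF_m^-(s,t)\right) + \tfrac12 S_{n}(s,t),
\end{eqnarray*}
where $R_1(s,t) \pm (s-t)^4$ are gamma positive with center of symmetry $2$ and $S_n(s,t)$ is gamma positive with center of symmetry $n/2$ (all established inside Lemma \ref{lemma:Dexcnplus4fromDexcn}).

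By the induction hypothesis, $\FF_m^+(s,t)$ and $\FF_m^-(s,t)$ each split as $p_1^{\pm}(t) + p_2^{\pm}(t)$ with centers of symmetry $(m-1)/2$ and $(m+1)/2$. Multiplying through by the gamma positive factors $\tfrac12[R_1 \pm (s-t)^4]$ (center $2$) shifts these centers to $(m-1)/2 + 2 = (n-1)/2$ and $(m+1)/2 + 2 = (n+1)/2$ by Lemma \ref{lem:prod_bivariate_gamma_nonneg}, and the sum $\tfrac12 S_n(s,t)$ has center $n/2 = (n-1)/2 + 1/2$; since $S_n$ (being a genuine bivariate gamma positive polynomial in $(st)$ and $(s+t)$) has odd length in its univariate specialization, Lemma \ref{lem:more_gamma_positive} splits it too into two gamma positive pieces with centers $(n-1)/2$ and $(n+1)/2$. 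Collecting the pieces with center $(n-1)/2$ into $w_1^{\pm}(t)$ and those with center $(n+1)/2$ into $w_2^{\pm}(t)$ gives the required decomposition $\FF_n^{\pm}(t) = w_1^{\pm}(t) + w_2^{\pm}(t)$.

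The one place that needs care — and the main potential obstacle — is verifying that the relevant univariate specializations genuinely have \emph{odd} length, so that Lemma \ref{lem:more_gamma_positive} applies (it requires odd length), and that no cancellation in the leading or trailing coefficients spoils the claimed centers of symmetry. I expect this to follow from a short parity/degree bookkeeping argument: $S_n(s,t)$ is visibly a polynomial in $st$ and $s+t$ with lowest term a positive multiple of $(st)^{?}(s+t)^{\mathrm{odd}}$, and the terms $[R_1 \pm (s-t)^4]\FF_m^{\pm}$ inherit their length parity from the inductive decomposition of $\FF_m^{\pm}$. The rest is routine and parallels the already-written proofs of Theorems \ref{thm:sumof2forevenAtype} and \ref{thm:d_type_exccedance_even_integers}.
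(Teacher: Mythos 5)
Your strategy diverges from the paper's: the paper does not invoke the jump-by-four recurrence here at all. It applies the single-step recurrence of Lemma \ref{lem:rec_D_type_excedance} once, together with $\FFsgnE_n(s,t)=s(s-t)^{n-1}=s\,\FFsgnE_{n-1}(s,t)$ for odd $n$, to obtain $\FF_n^+(t)=\FF_{n-1}^+(t)+\tfrac12\, t\,\FFBminusD_{n-1}(t)+\tfrac12\{stD\EE_{n-1}(s,t)\}|_{s=1}$; since $n-1$ is even, the first two summands are already gamma positive with centers $(n-1)/2$ and $(n+1)/2$ by Theorems \ref{thm:d_type_exccedance_even_integers} and \ref{thm:gamma-nonnegativite_for_D_type_excedance,n_even}, and only the last summand (center $n/2$, automatically of odd length) needs Lemma \ref{lem:more_gamma_positive}. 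This is considerably lighter than what you propose and needs no strengthened induction.

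More importantly, your argument has a genuine gap at exactly the point you flag as ``needing care,'' but the problem is worse than a length-parity check. The gamma positivity of $S_{n+4}(s,t)$ asserted in Lemma \ref{lemma:Dexcnplus4fromDexcn} rests on each of its six summands being gamma positive, and the summand $R_2(s,t)\FFBminusD_{m}(s,t)$ is gamma positive only when $m$ is even: for odd $m$, $\FFBminusD_m(s,t)=\EE_m^-(s,t)$ is not even palindromic. Concretely, writing $\FF_k=\tfrac12\left(\EE_k+(s-t)^k\right)$ and using $R_1-(s-t)^4=16st(s+t)^2$, one finds $S_{m+4}=\tfrac12\left(\EE_{m+4}-R_1\EE_m\right)-8st(s+t)^2(s-t)^m$, and the last term is anti-palindromic and nonzero for odd $m$. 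So for odd $n$ your $S_n(s,t)$ is not palindromic, is not ``a polynomial in $st$ and $s+t$,'' and Lemma \ref{lem:more_gamma_positive} cannot be applied to it. The step can be repaired --- split off $R_2\FFBminusD_m$ and use a decomposition of $\FFBminusD_m(t)$ into two gamma positive pieces with explicit centers $(m-1)/2$ and $(m+1)/2$ (which requires sharpening the statement of Theorem \ref{thm:gamma-nonnegativite_for_D_type_excedance,n_even}), and likewise record the centers of the two pieces in your inductive hypothesis --- but as written the appeal to Lemma \ref{lemma:Dexcnplus4fromDexcn} for odd $n$ is unjustified, and the one-step reduction to even $n-1$ avoids the issue entirely.
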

 
\begin{proof} Let $n$ be an odd integer $\geq 5$. 
\begin{eqnarray}
  & & \FF_n^+(s,t) = \frac{1}{2} \left(\FF_{n}(s,t)+ \FFsgnE_n(s,t)  
\right)\nonumber
 	\\ & = & \frac{1}{2} \left(s\FF_{n-1}(s,t)+ \FFsgnE_n(s,t) + 
 	t\FFBminusD_{n-1}(s,t) \right) \nonumber + \frac{1}{2}stD 
 	\EE_{n-1}(s,t) \nonumber \\ \nonumber
 	& = & \frac{1}{2}\left(s\FF_{n-1}(s,t)+s\FFsgnE_{n-1}(s,t)+
 	t\FFBminusD_{n-1}(s,t)+stD 
 	\EE_{n-1}(s,t) \nonumber 
 	\right) \nonumber
\end{eqnarray}
 	Hence, $$\FF_n^+(t)=\frac{1}{2} \left({2\FF_{n-1}^+(t)+t\FFBminusD_{n-1}(t)+\{stD 
 		\EE_{n-1}(s,t)\}|_{s=1}}\right) .$$
 	Now the proof follows in similar lines to the proof of Theorem \ref{thm:sumof2forevenAtype}. 
\end{proof}

\section{Excedance in Even and odd Derangements }
Let $\SD_n \subseteq \SSS_n$ be the subset of derangements in $\SSS_n$.
Though descents and excedances in $\SSS_n$ are equidistributed, they are
not equidistributed in $\Der_n \cap \AAA_n$.  Recall $\ADE_n(t)$ as 
defined in \eqref{eqn:der_exc}.  
Shin and Zeng \cite{shin-zeng-eulerian-continued-fraction} and 
Sharesian and Wachs \cite{shareshian-wachs-eulerian-quasisymmetric} proved gamma
positivity of $\ADE_n(t)$.

\begin{theorem}[Shin and Zeng]
\label{thm:DerEn(t) Gamma-nonnegative}
For all positive integers $n$, $\ADE_n(t)$ is gamma positive.
\end{theorem}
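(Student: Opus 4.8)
The plan is to refine $\ADE_n(t)$ to a homogeneous bivariate polynomial that carries a short recurrence, and then to run that recurrence through the closure properties of Section~\ref{subsec:gamma_prelims}. For $\pi\in\SD_n$ every index is either an excedance or a deficiency, so $\exc(\pi)+\nexc(\pi)=n$; I would set
$$
P_n(s,t)\ :=\ \sum_{\pi\in\SD_n}s^{\nexc(\pi)}t^{\exc(\pi)}\ =\ s\cdot\ADE_n(s,t),
$$
a homogeneous polynomial of degree $n$ with $P_n(1,t)=\ADE_n(t)$. Since a bivariate gamma positive polynomial restricts at $s=1$ to a univariate gamma positive (palindromic) polynomial with the same center of symmetry, it is enough to prove that $P_n(s,t)$ is bivariate gamma positive with $\cnos(P_n(s,t))=n/2$.

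The key step is the recurrence
$$
P_n(s,t)\ =\ st\,D\,P_{n-1}(s,t)\ +\ (n-1)\,st\,P_{n-2}(s,t),\qquad n\ge 3,
$$
(with $P_1=0$, $P_2=st$ and $D=\tfrac{d}{ds}+\tfrac{d}{dt}$), which I would obtain by refining the classical bijective proof of $|\SD_n|=(n-1)(|\SD_{n-1}|+|\SD_{n-2}|)$ so as to record the pair $(\nexc,\exc)$. Writing $j=\pi(n)$: when $(j,n)$ is a $2$-cycle one deletes it and relabels the remaining $n-2$ letters order-preservingly, which contributes exactly one excedance (at position $j$) and one deficiency (at position $n$) and can be done in $n-1$ ways, giving the term $(n-1)\,st\,P_{n-2}(s,t)$. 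Otherwise, with $m=\pi^{-1}(n)$, the assignment $\pi'(m)=j$ and $\pi'(i)=\pi(i)$ elsewhere is a bijection onto $\SD_{n-1}\times[n-1]$; position $n$ becomes a deficiency, while position $m$ --- an excedance of $\pi$ --- becomes an excedance of $\pi'$ if $j>m$ and a deficiency of $\pi'$ if $j<m$. Summing $s^{\nexc}t^{\exc}$ over this second family then equals $\sum_{\pi'\in\SD_{n-1}}s^{\nexc(\pi')}t^{\exc(\pi')}\bigl(\exc(\pi')\,s+\nexc(\pi')\,t\bigr)=st\,D\,P_{n-1}(s,t)$, since $t\partial_t$ and $s\partial_s$ produce the $\exc$ and $\nexc$ weights. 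Getting this second bookkeeping computation exactly right --- in particular the $s$-versus-$t$ split at position $m$ --- is the step I expect to be the main obstacle; everything else is routine.

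Granting the recurrence, the induction is immediate. The base cases $P_2(s,t)=st$ and $P_3(s,t)=st(s+t)$ are gamma positive with centers of symmetry $1$ and $3/2$. Assuming $P_{n-1}$ and $P_{n-2}$ are gamma positive with $\cnos=(n-1)/2$ and $(n-2)/2$, Lemma~\ref{lem:derivative_gamma_nonneg} gives that $D\,P_{n-1}(s,t)$ is gamma positive with $\cnos=(n-2)/2$, and then part~(2) of Corollary~\ref{cor:gamma_nonneg} gives that both $st\,D\,P_{n-1}(s,t)$ and $st\,P_{n-2}(s,t)$ are gamma positive with $\cnos=1+(n-2)/2=n/2$. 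Scaling the second by $n-1\ge 0$ and adding two gamma positive polynomials sharing a center of symmetry preserves gamma positivity, so $P_n(s,t)$ is gamma positive with $\cnos=n/2$; restricting to $s=1$ then shows $\ADE_n(t)$ is gamma positive. (Alternatively, as noted just before the statement, one could instead cite Zhang's real-rootedness of $\ADE_n(t)$ together with its palindromicity; I prefer the recurrence here because it stays inside the paper's toolkit and is exactly what refines to Theorem~\ref{thm:main_exc}.)
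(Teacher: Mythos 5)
Your proof is correct, and it takes a genuinely different route from the paper's. You derive the bivariate form of the classical recurrence $d_n(t)=(n-1)t\,d_{n-1}(t)+(n-1)t\,d_{n-2}(t)+t(1-t)d_{n-1}'(t)$ from the standard ``delete $n$'' bijection for derangements (your case split and the $s$-versus-$t$ bookkeeping at position $m$ are right: the displayed weight $\exc(\pi')s+\nexc(\pi')t$ is exactly what $st\,D$ produces, and I checked the recurrence reproduces $P_4$ and $P_5$), and then you close the induction with Lemma~\ref{lem:derivative_gamma_nonneg} and Corollary~\ref{cor:gamma_nonneg}. The paper instead proves this statement (inside Theorem~\ref{thm:evenderangsexc}) by decomposing $\SD_n$ into conjugacy classes $C_\lambda$ with $1\not\in\lambda$ and showing each $\ADE_\lambda(t)$ is a positive integer times $\prod_j[tA_{j-1}(t)]^{m_j}$, a product of gamma positive factors whose centers of symmetry sum to $n/2$. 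The trade-off: the cycle-type decomposition gives the sign and fixed-point refinements (Theorems~\ref{thm:evenderangsexc} and~\ref{thm:gammapositivederangementwithfixedpoints}) for free, since sign is constant on conjugacy classes, plus explicit gamma coefficients class by class; your recurrence is more self-contained for the unrefined statement and stays entirely inside the Section~\ref{subsec:gamma_prelims} toolkit. One caveat on your closing parenthetical: this recurrence is not what the paper uses for Theorem~\ref{thm:main_exc} (that proof compares $q$-parities in Shin--Zeng's formulas), and as written it does not immediately split over $\SD_n^{\pm}$ --- though it could be made to, since both of your cases reverse the sign of the permutation, yielding the coupled system $P_n^{\pm}=st\,D\,P_{n-1}^{\mp}+(n-1)st\,P_{n-2}^{\mp}$.
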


Sun and Wang in \cite{sun-wang-grou-action-derang} gave an alternate 
proof of Theorem  \ref{thm:DerEn(t) Gamma-nonnegative} based on 
a semi bijective variant of {\it valley hopping} that we call {\it cyclic
valley hopping}.  It can be checked that the cyclic valley 
hopping proof preserves cycle type and hence sign. 
Thus, we get the following refinement of Theorem 
\ref{thm:DerEn(t) Gamma-nonnegative}.  We give a different proof 
as our proof gives more refined information. 

\begin{theorem}
\label{thm:even_derangs_exc}
The polynomials $\ADE_n^+(t)$ and $\ADE_n^-(t)$ are gamma positive
for all positive integers $n$.
\end{theorem}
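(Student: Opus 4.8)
The plan is to prove gamma positivity of $\ADE_n^+(t)$ and $\ADE_n^-(t)$ by setting up a recurrence that mirrors the structure already exploited for $\GE_n^\pm$ and $\FF_n^\pm$. The starting point should be a decomposition of the signed derangement excedance polynomial: writing $\ADE_n^{\mathsf{sgn}}(s,t) = \sum_{\pi \in \SD_n}(-1)^{\inv(\pi)} t^{\exc(\pi)} s^{\antiexc(\pi)-1}$, we would have $\ADE_n^+(s,t) = \frac{1}{2}(\ADE_n(s,t) + \ADE_n^{\mathsf{sgn}}(s,t))$ and $\ADE_n^-(s,t) = \frac{1}{2}(\ADE_n(s,t) - \ADE_n^{\mathsf{sgn}}(s,t))$. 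Since $\ADE_n(s,t)$ is already known to be gamma positive (the bivariate refinement of Theorem \ref{thm:DerEn(t) Gamma-nonnegative}, and indeed with the $\cyc$ and $\inv$ refinements in the upcoming Theorem \ref{thm:main_exc}), the whole problem reduces to controlling the signed polynomial $\ADE_n^{\mathsf{sgn}}(s,t)$ well enough that the two halves come out gamma positive with a common center of symmetry.

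First I would establish a closed form (or at least a clean recurrence) for $\ADE_n^{\mathsf{sgn}}(s,t)$, analogous to Corollary \ref{cor:mantaci_theorem} ($\sum_{\SSS_n}(-1)^{\inv}t^{\exc}s^{\antiexc-1} = (s-t)^{n-1}$) and to Theorem \ref{formula_bivariate_signed_excedance} in type D. The natural route is inclusion–exclusion on fixed points: a permutation with fixed point set $F$ of size $k$ contributes $(-1)^{\inv}$ times a power of $s$ (each fixed point is a non-excedance), so summing over all $\pi \in \SSS_n$ and peeling off derangements gives $\ADE_n^{\mathsf{sgn}}(s,t)$ in terms of the $(s-t)^{j-1}$ values via $s^{n-j}\binom{n}{j}$-type coefficients; alternatively one derives a two-term recurrence in $n$ by inserting $n$ into a cycle. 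I expect the answer to be something like $\ADE_n^{\mathsf{sgn}}(s,t) = (s-t)^{n-1} - (\text{lower-degree correction})$, or a small polynomial in $s,t$; the key point is that it should be expressible in the basis $\{(st)^i(s+t)^{n-1-2i}\}$ up to sign, so that $R_1 \pm \ADE^{\mathsf{sgn}}$-type combinations stay gamma positive.

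Next, following the template of Theorem \ref{thm:d_type_exccedance_even_integers}, I would run an induction — likely a jump-by-one or jump-by-two recurrence obtained by inserting the letter $n$ into the cycle structure of an element of $\SD_{n-1}$ (this is where cyclic valley hopping's combinatorics get encoded algebraically: inserting $n$ either creates a new excedance-nonexcedance pair or extends an existing cycle, contributing $st D$ and $(s+t)$-type operators). Combined with the signed formula, this yields $\ADE_{n}^+(s,t)$ as a nonnegative combination of $D^j$ applied to $\ADE_{n-c}^{\pm}(s,t)$ plus a "symmetric leftover" $S_n(s,t)$ with even gamma coefficients, exactly as $S_{n+4}(s,t)$ appears in Lemma \ref{lemma:Dexcnplus4fromDexcn}. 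All the operator coefficients must be gamma positive with matching centers of symmetry, which is a finite check; the base cases $n = 1,2,3,4$ (or however many the recurrence depth demands) are verified by direct computation.

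The main obstacle will be pinning down $\ADE_n^{\mathsf{sgn}}(s,t)$ precisely and verifying that it sits correctly relative to the center of symmetry of $\ADE_n(s,t)$ — in particular that $\ADE_n^{\mathsf{sgn}}$ has the same center of symmetry (or one differing by an amount the recurrence tolerates), so that the $\frac{1}{2}(\ADE_n \pm \ADE_n^{\mathsf{sgn}})$ split doesn't destroy palindromicity. Unlike the full symmetric group case where the center of symmetry is uniform, derangement excedance polynomials have center of symmetry varying with $n$, and the signed version may fail to be palindromic for one parity of $n$ (compare Lemma \ref{lem:palindromic} and the type-D dichotomy in Theorem \ref{formula_bivariate_signed_excedance}); if so, one gets genuine gamma positivity for one parity and a "sum of two gamma positive polynomials" statement for the other, and the theorem as stated (gamma positive for \emph{all} $n$) would require checking that the derangement constraint happens to kill the obstruction. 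Resolving exactly which case occurs — and confirming it lands on the favorable side for all $n$ — is the crux; everything downstream is the same operator-bookkeeping already carried out three times in the paper.
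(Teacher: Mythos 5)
There is a genuine gap: your plan hinges entirely on the step you yourself flag as ``the crux'' --- determining the signed polynomial $\sum_{\pi\in\SD_n}(-1)^{\inv(\pi)}t^{\exc(\pi)}s^{\nexc(\pi)-1}$ and showing that the two halves $\tfrac12(\ADE_n\pm\ADE_n^{\mathsf{sgn}})$ come out gamma positive --- and that step is neither carried out nor routine. One can in fact compute (e.g.\ by inclusion--exclusion on fixed points from Corollary \ref{cor:mantaci_theorem}) that $\sum_{\pi\in\SD_n}(-1)^{\inv(\pi)}t^{\exc(\pi)}=(-1)^{n-1}(t+t^2+\cdots+t^{n-1})$. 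This is palindromic with the right center of symmetry $n/2$, but $\pm(t+\cdots+t^{n-1})$ is \emph{not} gamma positive (already $1+t+t^2=(1+t)^2-t$ has gamma vector $(1,-1)$, and the negative entries grow with $n$). So unlike the $\SSS_n$, type-B and type-D cases in this paper, where the signed enumerator is $\pm(s-t)^{n-1}$ or $(s-t)^n$ and the combinations $R_1(s,t)\pm(s-t)^4$ happen to be gamma positive, here the split $\tfrac12(\ADE_n\pm\ADE_n^{\mathsf{sgn}})$ does not inherit gamma positivity from that of $\ADE_n$; you would additionally need a quantitative lower bound on the gamma coefficients of $\ADE_n(t)$ dominating the (alternating, growing) gamma vector of $t+\cdots+t^{n-1}$. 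Nothing in your sketch, or in the paper's toolkit of lemmas, supplies that bound, and the proposed insertion recurrence for derangements (where $n$ cannot be inserted as a fixed point, and insertion into a cycle flips the sign) is also left unworked.

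The paper avoids all of this by exploiting the fact that $\sgn$ is a class function. It writes $\SD_n=\biguplus_{\lambda\vdash n,\,1\notin\lambda}C_\lambda$ as a union of conjugacy classes, proves via an explicit bijection that the single-cycle class satisfies $\ADE_{(n)}(t)=tA_{n-1}(t)$ (Lemma \ref{lemma:excedanceforonecycle}), and then shows $\ADE_\lambda(t)$ is a positive integer multiple of $\prod_j[tA_{j-1}(t)]^{m_j}$ (Theorem \ref{thm:excedancewithcycletype}), hence gamma positive with center of symmetry $n/2$ whenever $1\notin\lambda$ by Lemma \ref{lem:prod_bivariate_gamma_nonneg}. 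Summing the classes with $\sgn(\lambda)=+1$ (resp.\ $-1$) gives $\ADE_n^+(t)$ (resp.\ $\ADE_n^-(t)$) as a sum of gamma positive polynomials with a common center of symmetry. If you want to salvage your route, the conjugacy-class decomposition is exactly the missing ingredient that replaces the unsigned-plus-signed split.
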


\subsection{Gamma-nonnegativity for even and odd derangements}
   
For $\pi \in \SSS_n$, let $\cyctyp(\pi)=1^{m_1}2^{m_2}\ldots k^{m_k}$
where we have $m_i$'s are positive integers and $\pi$ has $m_i$ cycles of length $i$.
We denote by $\lambda(\pi) \vdash n$, the partition (of the integer $n$) with $m_i$ 
parts having size $i$ and also denote this as $\cyctyp(\pi) = \lambda$.
It is well known that conjugacy classes in $\SSS_n$ are indexed by 
partitions $\lambda \vdash n$ and the conjugacy class 
$C_{\lambda} = \{ \pi \in \SSS_n: \cyctyp(\pi) = \lambda\}$.
For $\lambda \vdash n$, define 
\begin{equation}
\label{eqn:defn_exc_conjugacy}
\ADE_{\lambda}(t)=\sum_{\pi \in C_{\lambda}}t^{\exc(\pi)}.
\end{equation}

We start by summing excedance over cyclic permutations (which are all
permutations $\pi$ with $\cyctyp(\pi) = n$).
\begin{lemma}
\label{lemma:excedanceforonecycle}
Let $n \geq 2$ and consider the partition $\mu = n$.  Then the following holds.
\begin{eqnarray}
\label{eqn:excedanceforonecycle}
\ADE_{\mu}(t)=\sum_{\pi \in \SSS_n,\cyctyp(\pi)=n }t^{\exc(\pi)}=tA_{n-1}(t)
\end{eqnarray}
\vspace{-2 mm}
Thus $\ADE_{\mu}(t)$ is gamma positive with center of symmetry $\frac{1}{2}n$.
\end{lemma}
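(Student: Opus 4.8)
The statement asks us to show that $\ADE_{\mu}(t)=\sum_{\pi\in\SSS_n,\,\cyctyp(\pi)=n}t^{\exc(\pi)}=tA_{n-1}(t)$, and then deduce gamma positivity. The cleanest approach is to exhibit a bijection $\phi\colon C_n\to\SSS_{n-1}$, where $C_n$ denotes the set of $n$-cycles in $\SSS_n$, such that $\exc(\pi)=\des(\phi(\pi))+1$ for every $n$-cycle $\pi$. Since $|C_n|=(n-1)!=|\SSS_{n-1}|$, such a map being a bijection is plausible, and the natural candidate is the "fundamental transformation" restricted to the single-cycle case: write the $n$-cycle $\pi$ in cycle notation starting from its largest element $n$, say $\pi=(n,\,a_1,\,a_2,\,\ldots,\,a_{n-1})$ (this normalization is forced, so it is well-defined), and set $\phi(\pi)=a_1a_2\cdots a_{n-1}\in\SSS_{n-1}$ in one-line notation. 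First I would check that $\phi$ is a bijection: given any word $a_1\cdots a_{n-1}$ on $\{1,\ldots,n-1\}$, prepending $n$ and reading it as a cycle recovers a unique $n$-cycle, so $\phi$ is invertible.

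The heart of the argument is the excedance count. For the normalized cycle $\pi=(n,a_1,\ldots,a_{n-1})$, one has $\pi(n)=a_1$, $\pi(a_i)=a_{i+1}$ for $1\le i\le n-2$, and $\pi(a_{n-1})=n$. The position $a_{n-1}$ is always an excedance since $\pi(a_{n-1})=n>a_{n-1}$; that accounts for the "$+1$". Position $n$ is never an excedance since $\pi(n)=a_1<n$. For the remaining positions, $a_i$ (with $1\le i\le n-2$) is an excedance of $\pi$ iff $\pi(a_i)=a_{i+1}>a_i$, i.e.\ iff $i$ is an ascent of the word $a_1\cdots a_{n-1}$. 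Hence $\exc(\pi)=\asc(\phi(\pi))+1=(n-2-\des(\phi(\pi)))+1$... which would give $tA_{n-1}(t)$ only after using the palindromicity $\sum_{\sigma}t^{\asc\sigma}=\sum_{\sigma}t^{\des\sigma}=A_{n-1}(t)$ on $\SSS_{n-1}$. So the identity I actually prove is $\exc(\pi)=\asc(\phi(\pi))+1$, and then $\ADE_\mu(t)=\sum_{\sigma\in\SSS_{n-1}}t^{\asc(\sigma)+1}=t\sum_{\sigma\in\SSS_{n-1}}t^{\des(\sigma)}=tA_{n-1}(t)$, using the standard fact that ascents and descents are equidistributed on $\SSS_{n-1}$ (the reversal map $\sigma\mapsto\sigma^{\mathrm{rev}}$).

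Finally, for the gamma positivity claim: $A_{n-1}(t)=\GE_{n-1}(t)$ is palindromic with center of symmetry $(n-2)/2$ (equivalently it lies in $\PP_{(n-2)/2}$), and it is gamma positive by the classical Foata--Schützenberger result quoted in the introduction. Multiplying by $t$ shifts the center of symmetry to $(n-2)/2 + 1 = n/2$; invoking the $s=1$ specialization of Corollary~\ref{cor:gamma_nonneg}, part (2), with $i=1$ (or simply noting that if $f(t)=\sum_j\gamma_j t^j(1+t)^{n-1-2j}$ is gamma positive then $tf(t)=\sum_j\gamma_j t^{j+1}(1+t)^{n-1-2j}$ is gamma positive with center of symmetry raised by one), we conclude $\ADE_\mu(t)$ is gamma positive with $\cnos=n/2$.

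\textbf{Main obstacle.} The only real subtlety is getting the bookkeeping of the excedance count exactly right at the two boundary positions $n$ and $a_{n-1}$, and being careful about whether the bijection most naturally matches $\exc$ to $\des+1$ or to $\asc+1$; these differ by composing with a reversal on $\SSS_{n-1}$, which is harmless because of the $\asc$/$\des$ symmetry but must be acknowledged. Everything else is routine given the classical gamma positivity of the Eulerian polynomials.
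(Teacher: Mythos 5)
Your proof is correct and is essentially the same as the paper's: both exhibit a bijection between $n$-cycles and $\SSS_{n-1}$ under which excedance becomes a descent-type statistic plus one, then quote gamma positivity of $A_{n-1}(t)$ and the shift of the center of symmetry by $t$-multiplication. The only cosmetic difference is that you normalize the cycle to start at $n$ and land on ascents (then invoke the $\asc$/$\des$ equidistribution), while the paper starts the cycle at $1$ and complements the entries $a_i\mapsto n+1-a_i$ so that excedances match descents directly.
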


\vspace{2 mm} 

\begin{proof}
Let $\pi = a_1,a_2, \ldots, a_{n-1} \in \SSS_{n-1}$.
Define  $f:\SSS_{n-1} \mapsto C_{\mu}$ by 
$f(\pi)= (1, n+1-a_{1},
n+1-a_{2},\ldots,n+1-a_{n-1})$, 
where $a_1, a_2, 
\ldots, a_{n-1}$ is in one-line notation and 
$(1, n+1-a_{1}, n+1-a_{2}, \ldots, n+1-a_{n-1})$ 
is in cyclic notation. Clearly, $\pi$ has $k$ descents if and 
only if $n-a_1, n-a_2, \ldots, n-a_{n-1}$ has $k$ ascents.
This happens if and only if $f(\pi)=(1, n+1-a_{1}, n+1-a_{2}, \ldots,
n+1-a_{n-1})$ has $k+1$ excedances. 
$f$ is a bijection with $f^{-1} = g$ where $g: C_{\mu} \mapsto \SSS_{n-1}$ 
is defined by $g((1,a_1,a_2,\ldots,a_{n-1}))=n+1-a_1, 
n+1-a_2, \ldots, n+1-a_{n-1}$.  $A_{n-1}(t)$ is gamma positive with 
center of symmetry $\frac{1}{2}(n-2)$ and hence $\ADE_{\mu}(t)=
tA_{n-1}(t)$ has center of symmetry $\frac{1}{2}n$, completing
the proof.
\end{proof}

Let $\lambda \vdash n$.  We use two 
notations for $\lambda$.  
In exponential 
notation, let $\lambda = 1^{m_1},2^{m_2}, \ldots, k^{m_k}$.
This means $\lambda$ has $m_i$ parts equal to $i$ for $1 \leq i \leq k$.
We also use $\lambda = \lambda_1, \lambda_2, \ldots, \lambda_{\ell}$ to
denote that the parts of $\lambda$ are $\lambda_i$ for $1 \leq i \leq \ell$.
Define $\binom{n}{\lambda}$  to be the multinomial coefficient
$\binom{n}{\lambda_1, \ldots, \lambda_{\ell}}$.  
We next consider $C_{\lambda}$, the conjugacy class of $\SSS_n$ indexed by $\lambda$.
It is well known that $C_{\lambda}$ contains all $\pi \in \SSS_n$ with 
$\cyctyp(\pi) = \lambda$.   We start with the case when 
$\lambda = n_1,n_2$ with both $n_1,n_2 \geq 1$.
We do not need $n_1$ and $n_2$ to be distinct.  

\begin{lemma}
\label{lemma:excedancefortwocycle}
Let $\lambda \vdash n$ with $\lambda = n_1,n_2$ where $n_1, n_2 \geq 1$.
Then,
\begin{eqnarray}
\label{eqn:excedanceforrtwocycle}
\sum_{\pi \in C_{\lambda} } t^{\exc(\pi)} & =& 
\displaystyle \frac{ \binom{n}{\lambda} }{\prod_{i=1}^k (m_i!)} 
\left(\sum_{\pi \in \SSS_{n_1},\cyctyp(\pi)=n_1}t^{\exc(\pi)} \right) \times 
\left( \sum_{\pi \in \SSS_{n_2}, \cyctyp(\pi)=n_2}t^{\exc(\pi)} \right) \\
\label{eqn:excedance_two_cycles_eulerian}
& = & 
\displaystyle \frac{ \binom{n}{\lambda} }{\prod_{i=1}^k (m_i!)} 
\left[ t A_{n_1-1}(t) \right] \times \left[ t A_{n_2-1}(t) \right]
\end{eqnarray}
Thus, $\sum_{\pi \in C_{\lambda}} t^{\exc(\pi)}$ is gamma 
positive with center of symmetry $\frac{1}{2}n$ if both $n_1, n_2 > 1$. 
\end{lemma}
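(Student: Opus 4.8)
The plan is to reduce the two-cycle case to the one-cycle case (Lemma \ref{lemma:excedanceforonecycle}) by a product decomposition on disjoint alphabets, and then invoke gamma positivity closure under products (Lemma \ref{lem:prod_bivariate_gamma_nonneg} together with Corollary \ref{cor:gamma_nonneg}). First I would fix a partition of the ground set $[n]$ into a block $A$ of size $n_1$ and a block $B$ of size $n_2$; the number of unordered such partitions that yield cycle type $\lambda = n_1, n_2$ is exactly $\binom{n}{\lambda}/\prod_i (m_i!)$ — the multinomial coefficient divided by the symmetry factor, which accounts precisely for the case $n_1 = n_2$ (where the two blocks are interchangeable) and is $1$ otherwise. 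For each fixed choice of $(A,B)$, a permutation $\pi \in C_\lambda$ supported with one cycle on $A$ and one on $B$ is determined by a pair $(\sigma, \tau)$ where $\sigma$ is an $n_1$-cycle on $A$ and $\tau$ is an $n_2$-cycle on $B$.

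The key observation is that excedances are additive under this decomposition and, crucially, depend only on the relative order of the elements within each block, not on which actual integers populate the block. Concretely, if $A = \{a_1 < a_2 < \cdots < a_{n_1}\}$ and $\pi$ restricted to $A$ is the cycle sending $a_i \mapsto a_{\rho(i)}$ for some $n_1$-cycle $\rho$ on $[n_1]$, then $i \in [n_1]$ is an excedance index of $\rho$ (i.e. $\rho(i) > i$) if and only if $a_i$ is an excedance position of $\pi$ (i.e. $\pi(a_i) > a_i$), since $a_i$ is order-preserving. Hence $\exc(\pi) = \exc(\pi|_A) + \exc(\pi|_B)$, where each summand only sees the relative order. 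Summing over all $n_1$-cycles on $A$ and all $n_2$-cycles on $B$, the generating function factors as
\begin{eqnarray*}
\sum_{\pi \in C_\lambda, \ \mathrm{supp} = (A,B)} t^{\exc(\pi)}
= \left( \sum_{\rho \in \SSS_{n_1}, \cyctyp(\rho) = n_1} t^{\exc(\rho)} \right)
\left( \sum_{\tau \in \SSS_{n_2}, \cyctyp(\tau) = n_2} t^{\exc(\tau)} \right).
\end{eqnarray*}
Multiplying by the number $\binom{n}{\lambda}/\prod_i(m_i!)$ of choices of $(A,B)$ gives \eqref{eqn:excedanceforrtwocycle}, and \eqref{eqn:excedance_two_cycles_eulerian} follows immediately by substituting $\sum_{\cyctyp = n_j} t^{\exc} = t A_{n_j - 1}(t)$ from Lemma \ref{lemma:excedanceforonecycle}.

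For the final gamma positivity assertion: by Lemma \ref{lemma:excedanceforonecycle}, when $n_j > 1$ the factor $t A_{n_j - 1}(t)$ is gamma positive with center of symmetry $n_j/2$ (viewing it in the univariate sense, or equivalently as the specialization $s=1$ of the corresponding homogeneous bivariate polynomial). A nonzero scalar multiple of a gamma positive polynomial is gamma positive, and by Lemma \ref{lem:prod_bivariate_gamma_nonneg} the product of two gamma positive polynomials is gamma positive with center of symmetry the sum of the two centers, namely $n_1/2 + n_2/2 = n/2$. This gives the claim when both $n_1, n_2 > 1$. The main obstacle — really the only subtle point — is getting the combinatorial prefactor exactly right: one must verify that the orbit-counting over unordered block-partitions produces precisely $\binom{n}{\lambda}/\prod_i(m_i!)$, handling the diagonal case $n_1 = n_2$ with its factor-of-two symmetry correctly, and confirming that no cross terms between the two blocks contribute to the excedance count (which is immediate since the two cycles act on disjoint supports). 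Everything else is a routine application of the product and scalar closure properties of gamma positivity already established in Section \ref{subsec:gamma_prelims}.
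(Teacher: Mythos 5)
Your proposal is correct and follows essentially the same route as the paper: both arguments use the order-preserving standardization of each cycle's support to reduce to a product of the one-cycle generating functions from Lemma \ref{lemma:excedanceforonecycle}, with the multiplicity $\binom{n}{\lambda}/\prod_i(m_i!)$ counting the choices of supports, and then conclude gamma positivity via Lemma \ref{lem:prod_bivariate_gamma_nonneg}. Your explicit treatment of the unordered-blocks symmetry factor when $n_1=n_2$ is a slightly more careful phrasing of the paper's statement that the standardization map is $\binom{n}{\lambda}/\prod_i(m_i!)$ to one, but the content is identical.
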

   
\begin{proof}
Let $T = \{{\pi \in \SSS_{n_1}:\cyctyp(\pi)=n_1 }\} 
\times\{{\pi \in \SSS_{n_2}:\cyctyp(\pi)=n_2 }\}$.
Let $\pi \in C_{\lambda}$ with $\pi = (a_1,a_2,\ldots,a_{n_1}), 
(b_1,b_2, \ldots,b_{n_2})$ (written in cycle notation).
Define $f: C_{\lambda} \mapsto T$ as 
$f(\pi)=
(a_1',a_2',\ldots,a_{n_1}'),(b_1',b_2,\ldots,b_{n_2}')$
where $(a_1',a_2',\ldots,a_{n_1}')$ is the order preserving
restriction of the set $A = \{a_1,a_2, \ldots, a_{n_1}\}$ to 
the set $[n_1]$.  That is we map the least element of $A$ to
$1$, the second smallest element to 2 and so on.  Similarly
$(b_1',b_2,\ldots,b_{n_2}')$ is the order preserving
restriction of $B = \{b_1, b_2, \ldots, b_{n_2} \}$ to $[n_2]$.
This is an 
$\displaystyle \frac{ \binom{n}{\lambda} }{\prod_{i=1}^k (m_i!)} $
to $1$ mapping. As excedances
are preserved under this map, the proof of 
\eqref{eqn:excedanceforrtwocycle} is complete.  
\eqref{eqn:excedance_two_cycles_eulerian} follows from 
Lemma \ref{lemma:excedanceforonecycle}.  Further, if both
$n_1, n_2 > 1$, then both $A_{n_1 -1}(t)$ and $A_{n_2-1}(t)$
have positive degree and thus the center of symmetry will 
clearly be $n/2$.
\end{proof}

\begin{remark}
\label{rem:integrality}
For a positive integer $n$, let $\lambda \vdash n$ with $\lambda = 1^{m_1}, 2^{m_2}, \ldots, k^{m_k}$.  
Then,
$\displaystyle \frac{ \binom{n}{\lambda} }{\prod_{i=1}^k (m_i!)}$ is a positive 
integer.  Indeed, it counts the number of ways to partition the set $[n]$ into parts with
sizes $\lambda_i$.
\end{remark}

\begin{theorem}
\label{thm:excedancewithcycletype}
Let $\lambda \vdash n$ with $\lambda =  1^{m_1}2^{m_2}\ldots k^{m_k}$. 
If $m_1=0$, then
\begin{equation}
\label{eqn:excedanceforkcycleswith1notpart}
\ADE_{\lambda}(t)= \sum_{\pi \in C_{\lambda}} t^{\exc(\pi)}= 
\displaystyle \frac{ \binom{n}{\lambda} }{\prod_{i=1}^k (m_i!)} 
\prod_{j=2}^{k} \left[ tA_{j-1}(t) \right]^{m_j}.
\end{equation}
Hence, $\ADE_{\lambda}(t)$ is gamma positive with center of symmetry 
 $\frac{1}{2}n$.
If $m_1>0$, then 
\begin{equation}
\label{eqn:excedanceforkcycleswith1apart}
\ADE_{\lambda}(t)= \sum_{\pi \in C_{\lambda}}t^{\exc(\pi)}=
\displaystyle \frac{ \binom{n}{\lambda} }{\prod_{i=1}^k (m_i!)} 
\prod_{j=2}^{k} [tA_{j-1}(t)]^{m_j}.
\end{equation}
Hence, $\ADE_{\lambda}(t)$ is a gamma positive
polynomial with center of symmetry $\frac{1}{2}(n-m_1)$. 
\end{theorem}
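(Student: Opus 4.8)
The plan is to extend Lemma~\ref{lemma:excedancefortwocycle} from two cycles to an arbitrary number of cycles by a single ``simultaneously standardize every cycle'' map, which reduces $\ADE_\lambda(t)$ to a product of single--cycle excedance polynomials; each such factor is then evaluated by Lemma~\ref{lemma:excedanceforonecycle}. (One could instead induct on the number of parts, peeling off one cycle at a time via Lemma~\ref{lemma:excedancefortwocycle}, but the direct map keeps the multinomial bookkeeping in one place.)

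Write $\lambda = 1^{m_1}2^{m_2}\cdots k^{m_k}$ and set $T=\prod_{j=1}^{k}\big(\{\sigma\in\SSS_j:\cyctyp(\sigma)=j\}\big)^{m_j}$. Given $\pi\in C_\lambda$, its cycle decomposition determines an unordered set partition of $[n]$ into the cycle supports; list the supports of size $j$ as $A_{j,1},\dots,A_{j,m_j}$ in increasing order of their least elements, let $\mathrm{std}_{A}:A\to[\,|A|\,]$ be the order--preserving bijection, and let $\sigma_{j,r}\in\SSS_j$ be the $j$--cycle obtained by transporting the cycle of $\pi$ on $A_{j,r}$ through $\mathrm{std}_{A_{j,r}}$. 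Put $f(\pi)=\big((\sigma_{j,r})_{r=1}^{m_j}\big)_{j=1}^{k}\in T$. I would then verify two things. First, $f$ is onto and $\binom{n}{\lambda}/\prod_i m_i!$--to--one: a preimage of a fixed tuple in $T$ is exactly a choice of an unordered set partition of $[n]$ into blocks of sizes $\lambda_1,\dots,\lambda_\ell$ (the least--element ordering rule together with the maps $\mathrm{std}^{-1}$ then rebuild $\pi$), and the number of such partitions is $\binom{n}{\lambda}/\prod_i m_i! = n!/\big(\prod_j (j!)^{m_j}\prod_i m_i!\big)=|C_\lambda|/|T|$, which simultaneously pins down the fibre size and, with surjectivity and the evident $\SSS_n$--symmetry, forces the map to be uniform. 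Second, $f$ preserves excedances: each support $A$ is $\pi$--invariant and $\mathrm{std}_A$ is order preserving, so $i\in A$ is an excedance of $\pi$ precisely when $\mathrm{std}_A(i)$ is an excedance of the standardized cycle on $A$; summing over supports gives $\exc(\pi)=\sum_{j,r}\exc(\sigma_{j,r})$.

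Combining these,
\[
\ADE_\lambda(t)=\frac{\binom{n}{\lambda}}{\prod_i m_i!}\ \prod_{j=1}^{k}\Big(\sum_{\sigma\in\SSS_j,\ \cyctyp(\sigma)=j}t^{\exc(\sigma)}\Big)^{m_j}.
\]
The $j=1$ factor equals $t^{0}=1$, since the only element of $\SSS_1$ with one cycle is the identity, which has no excedance; thus the $m_1$ fixed points contribute the factor $1$, which is the sole place the cases $m_1=0$ and $m_1>0$ diverge and which lets the product be taken over $j\ge 2$. For $j\ge 2$, Lemma~\ref{lemma:excedanceforonecycle} gives the $j$--cycle factor as $tA_{j-1}(t)$, yielding the asserted formula in both cases. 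Finally, $A_{j-1}(t)$ is gamma positive with $\cnos(A_{j-1}(t))=(j-2)/2$, hence $tA_{j-1}(t)$ is gamma positive with center $j/2$ (it is the $s=1$ specialization of $st\,A_{j-1}(s,t)$, which is bivariate gamma positive with $\cnos=j/2$ by Corollary~\ref{cor:gamma_nonneg}); a product of gamma positive univariate palindromic polynomials is gamma positive with centers of symmetry adding (the univariate shadow of Lemma~\ref{lem:prod_bivariate_gamma_nonneg}), so $\prod_{j=2}^{k}[tA_{j-1}(t)]^{m_j}$ is gamma positive with center $\sum_{j=2}^{k}m_j\,(j/2)=\tfrac12\big(\sum_{j=1}^{k}jm_j-m_1\big)=\tfrac12(n-m_1)$. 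Scaling by the positive integer $\binom{n}{\lambda}/\prod_i m_i!$ (Remark~\ref{rem:integrality}) changes neither gamma positivity nor the center, so the center is $n/2$ when $m_1=0$ and $(n-m_1)/2$ when $m_1>0$, as claimed.

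The work here is almost entirely bookkeeping rather than ideas: the one delicate point is making the ``standardize all cycles'' map genuinely well defined (fixing an ordering convention for cycles of equal length) and carrying out the $\binom{n}{\lambda}/\prod_i m_i!$--to--one count cleanly, for which the cardinality identity $|C_\lambda|=\big(\binom{n}{\lambda}/\prod_i m_i!\big)\,|T|$ together with surjectivity and symmetry is the slickest route, exactly as in the proof of Lemma~\ref{lemma:excedancefortwocycle}. The other thing not to overlook is that the single $1$--cycle polynomial is $1$, not $tA_0(t)$; this discrepancy is precisely what produces the $m_1$--shift in the center of symmetry when $\lambda$ has fixed points.
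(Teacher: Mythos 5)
Your proposal is correct and follows essentially the same route as the paper: reduce $\ADE_\lambda(t)$ to a product of single-cycle excedance polynomials via the standardization map with multiplicity $\binom{n}{\lambda}/\prod_i m_i!$ (the many-cycle version of Lemma~\ref{lemma:excedancefortwocycle}), evaluate each factor by Lemma~\ref{lemma:excedanceforonecycle}, and conclude gamma positivity and the center of symmetry from Lemma~\ref{lem:prod_bivariate_gamma_nonneg} and Remark~\ref{rem:integrality}. The paper merely asserts this as ``an easy application'' of the two-cycle lemma; your write-up supplies the details it omits, including the correct observation that the $1$-cycle factor is $1$, which accounts for the $m_1$-shift in the center of symmetry.
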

   
\begin{proof}
Is an easy application of Lemma \ref{lemma:excedancefortwocycle}.
By Lemmas \ref{lem:prod_bivariate_gamma_nonneg} and \ref{lemma:excedanceforonecycle}, 
$\prod_{j=2}^{k} (tA_{j-1}(t))^{m_j}$ 
is gamma positive with center of symmetry $\sum_{j=2}^{k}\frac{1}{2}(jm_j)$.
By Remark \ref{rem:integrality}, the multiplication factor is 
a positive integer and so gamma positivity is preserved.
We only need to note that when $m_1 = 0$, 
$\sum_{j=2}^{k}\frac{1}{2}(jm_j)=\frac{1}{2}n$.
Likewise, when $m_1 >0$, 
$\sum_{j=2}^{k}\frac{1}{2}jm_j=\frac{1}{2}(n-m_1)$.  
\end{proof}

\vspace{2 mm}

The following is our refinement of Theorem 
\ref{thm:DerEn(t) Gamma-nonnegative}.

\begin{theorem}
\label{thm:evenderangsexc}
For all positive integers $n$, the polynomials 
$\ADE_n(t), \ADE_n^+(t)$ and $\ADE_n^-(t)$ 
are gamma positive with centers of 
symmetry $\frac{n}{2}$.   
\end{theorem}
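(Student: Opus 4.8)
The plan is to deduce the theorem directly from Theorem~\ref{thm:excedancewithcycletype}, which already evaluates the excedance generating polynomial over a single conjugacy class $C_\lambda$ and, in the case of partitions with no part equal to $1$, establishes gamma positivity with center of symmetry $\frac{1}{2}n$. Since derangements are precisely the permutations whose cycle type has no part equal to $1$, all that remains is to organize the sum over conjugacy classes and to keep track of signs.

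First I would recall the standard fact that the sign is a class function: if $\cyctyp(\pi) = \lambda$ has $\ell = \ell(\lambda)$ parts, then $\sgn(\pi) = (-1)^{n-\ell}$. Consequently each conjugacy class $C_\lambda$ lies entirely in $\AAA_n$ or entirely in $\SSS_n - \AAA_n$, according to the parity of $n - \ell(\lambda)$. Combining this with the disjoint decomposition $\SD_n = \bigsqcup_{\lambda} C_\lambda$, where the union is over partitions $\lambda \vdash n$ with $m_1 = 0$ (equivalently, with no part equal to $1$), I obtain
$$\ADE_n(t) = \sum_{\substack{\lambda \vdash n \\ m_1 = 0}} \ADE_\lambda(t), \qquad \ADE_n^{\pm}(t) = \sum_{\substack{\lambda \vdash n,\ m_1 = 0 \\ (-1)^{n-\ell(\lambda)} = \pm 1}} \ADE_\lambda(t).$$

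Next, Theorem~\ref{thm:excedancewithcycletype} (the $m_1 = 0$ case) tells us that every polynomial $\ADE_\lambda(t)$ occurring on the right-hand sides above is gamma positive with center of symmetry $\frac{1}{2}n$. Since a finite sum of gamma positive polynomials sharing a common center of symmetry is again gamma positive with that same center of symmetry (the $\Gamma$-basis expansions can all be taken with respect to the basis of largest length, and the nonnegative $\gamma$-coefficients simply add), it follows that $\ADE_n(t)$, $\ADE_n^+(t)$ and $\ADE_n^-(t)$ are all gamma positive with center of symmetry $\frac{1}{2}n$, which is the assertion. For small $n$ one of $\ADE_n^+(t), \ADE_n^-(t)$ may be identically zero, in which case the statement is vacuous.

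There is essentially no obstacle here beyond Theorem~\ref{thm:excedancewithcycletype} itself; the only points that need care are verifying that the sign depends only on the cycle type --- so that restricting to even or odd derangements merely restricts the index set of the sum --- and observing that all the classwise polynomials share the single center of symmetry $\frac{1}{2}n$, so that summation preserves gamma positivity.
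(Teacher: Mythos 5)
Your proposal is correct and follows essentially the same route as the paper: decompose $\SD_n$ into conjugacy classes $C_\lambda$ with no part of $\lambda$ equal to $1$, note that the sign is constant on each class (so the even/odd split only restricts the index set), and sum the classwise gamma positive polynomials from Theorem~\ref{thm:excedancewithcycletype}, all of which share center of symmetry $\tfrac{n}{2}$. No substantive differences from the paper's argument.
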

\begin{proof}
We denote those partitions $\lambda \vdash n$, with no part being 1 as 
$1 \not\in \lambda$.  
Clearly $\SD_n = \biguplus_{\lambda \vdash n, 1 \not\in \lambda} C_{\lambda}$.
Thus, by Theorem \ref{thm:excedancewithcycletype}, summing over all 
conjugacy classes indexed by $\lambda \vdash n$ with 
$1 \not\in \lambda$, we get gamma positivity of $\ADE_n(t)$.

If $\lambda \vdash n$ with $\lambda = 1^{m_1}2^{m_2} \ldots k^{m_k}$, 
then clearly, if $\pi \in C_{\lambda}$, then $\pi \in \AAA_n$ iff 
$n-\sum_{j=1}^{k} m_j$ is even.  For $\lambda \vdash n$ define
$\sgn(\lambda) = \sgn(\pi)$ for any $\pi \in C_{\lambda}$.
Thus, summing over conjugacy classes $C_{\lambda}$ with 
$1 \not\in \lambda$ and with $\sgn(\lambda) = 1$ gives us
gamma positivity of $\ADE_n^+(t)$.  Likewise, summing over
conjugacy classes $C_{\lambda}$ with $1 \not\in \lambda$ and 
with $\sgn(\lambda) = -1$ gives us gamma positivity of $\ADE_n^-(t)$.
\end{proof}
   
Define $\SD_{n,i} \subseteq \SSS_n$ denote the permutations with $i$ fixed points. 
Let  $\SD_{n,i} ^+= \SD_{n,i} \cap \AAA_n$ and $\SD_{n,i} ^-= \SD_{n,i} \cap (\SSS_n-\AAA_n)$.
Define,
\begin{eqnarray}
\label{eqn:der_exc_rep}
\ADE_{n,i}(t) & = & \sum_{\pi \in \SD_{n,i}} t^{\exc(\pi)} 
\mbox{ 
	\hspace{ 3 mm}
	and 
	\hspace{ 3 mm}
}
\ADE_{n,i}(s,t)  =  \sum_{\pi \in \SD_{n,i}} t^{\exc(\pi)} s^{\antiexc(\pi)-1}, \\
\label{eqn:der_exc_even_rep}
\ADE_{n,i}^{+}(t) & = & \sum_{\pi \in \SD_{n,i}^+} t^{\exc(\pi)} 
\mbox{ 
	\hspace{ 3 mm}
	and 
	\hspace{ 3 mm}
}
\ADE_{n,i}^{+}(s,t)  =  \sum_{\pi \in \SD_{,i}^+} t^{\exc(\pi)} s^{\antiexc(\pi)-1}, \\
\label{eqn:der_exc_odd_rep}
\ADE_{n,i}^{-}(t) & = & \sum_{\pi \in \SD_{n,i}^-} t^{\exc(\pi)}
\mbox{ 
	\hspace{ 3 mm}
	and 
	\hspace{ 3 mm}}
\ADE_{n,i}^{-}(s,t)  =  \sum_{\pi \in \SD_{n,i}^-} t^{\exc(\pi)}s^{\antiexc(\pi)-1}.
\end{eqnarray}

\begin{theorem}
\label{thm:gammapositivederangementwithfixedpoints}
For all positive integers $n,i$ with $0 \leq i \leq n$, the polynomials 
$ \ADE_{n,i}^+(t)$ and $\ADE_{n,i}^-(t)$ are gamma positive with
center of symmetry $\frac{1}{2}(n-i)$.
\end{theorem}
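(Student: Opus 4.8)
The plan is to reduce the statement to Theorem~\ref{thm:evenderangsexc} (which is the case $i=0$) by deleting the fixed points. First I would describe the natural map $\varphi : \SD_{n,i} \to \SD_{n-i}$: given $\pi \in \SD_{n,i}$, let $F = \{j \in [n] : \pi(j) = j\}$, so $|F| = i$; write $[n]\setminus F = \{a_1 < a_2 < \cdots < a_{n-i}\}$, restrict $\pi$ to $[n]\setminus F$, and relabel $a_k \mapsto k$ order-preservingly to obtain $\sigma = \varphi(\pi) \in \SSS_{n-i}$. Since $\pi$ has no fixed point outside $F$, $\sigma$ is a derangement. Reading the construction in reverse (first choose the $i$-subset $F$ of $[n]$, then choose $\sigma \in \SD_{n-i}$) shows that every element of $\SD_{n-i}$ has exactly $\binom{n}{i}$ preimages under $\varphi$.

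Next I would check that $\varphi$ preserves the excedance number and the sign. A fixed point is never an excedance; and if $j = a_k$ with $\pi(a_k) = a_\ell$, then $\sigma(k) = \ell$ and, since the relabelling is order-preserving, $\pi(a_k) > a_k \iff a_\ell > a_k \iff \ell > k \iff \sigma(k) > k$. Hence $\exc(\pi) = \exc(\varphi(\pi))$. For the sign, the $i$ fixed points of $\pi$ are $1$-cycles, so $\cyc(\pi) = \cyc(\sigma) + i$ and therefore $\sgn(\pi) = (-1)^{n - \cyc(\pi)} = (-1)^{(n-i) - \cyc(\sigma)} = \sgn(\sigma)$; in particular $\pi \in \AAA_n$ if and only if $\sigma \in \AAA_{n-i}$. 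Since $\varphi$ is $\binom{n}{i}$-to-one and respects both statistics, these facts immediately give $\ADE_{n,i}^+(t) = \binom{n}{i}\,\ADE_{n-i}^+(t)$ and $\ADE_{n,i}^-(t) = \binom{n}{i}\,\ADE_{n-i}^-(t)$.

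Finally I would invoke Theorem~\ref{thm:evenderangsexc}, by which $\ADE_{n-i}^+(t)$ and $\ADE_{n-i}^-(t)$ are gamma positive with center of symmetry $\frac{1}{2}(n-i)$; multiplying by the positive integer $\binom{n}{i}$ preserves gamma positivity and the center of symmetry, which is exactly the assertion. An essentially equivalent route, closer to the preceding subsection, is to use $\SD_{n,i} = \biguplus_{\lambda \vdash n,\ m_1(\lambda) = i} C_\lambda$ together with Theorem~\ref{thm:excedancewithcycletype}, summing over those $\lambda$ with $\sgn(\lambda) = +1$ and $-1$ respectively. I do not expect a real obstacle here: the only steps needing care are the sign computation above and the genuinely degenerate cases $i \in \{n-1,n\}$, where $\SD_{n-i}$ is empty or a single point, so that $\ADE_{n,i}^-(t)$ --- and, when $i=n-1$, also $\ADE_{n,i}^+(t)$ --- is the zero polynomial, to be read with the usual convention.
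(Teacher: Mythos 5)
Your proposal is correct, and your ``alternative route'' at the end is in fact exactly the paper's proof: the paper disposes of this theorem in one line by summing Theorem~\ref{thm:excedancewithcycletype} over the conjugacy classes $C_\lambda$ with $m_1(\lambda)=i$ and prescribed $\sgn(\lambda)$, each of which is already known to be gamma positive with center of symmetry $\frac12(n-m_1)=\frac12(n-i)$. Your primary route --- the order-preserving fixed-point-deletion map $\varphi:\SD_{n,i}\to\SD_{n-i}$, checked to be $\binom{n}{i}$-to-one and to preserve both $\exc$ and the sign --- is a genuinely different reduction, to the $i=0$ case (Theorem~\ref{thm:evenderangsexc}) rather than to the conjugacy-class formula. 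It buys you slightly more: the explicit identity $\ADE_{n,i}^{\pm}(t)=\binom{n}{i}\,\ADE_{n-i}^{\pm}(t)$, which is stronger than mere gamma positivity and is not stated in the paper. The cost is that you must verify the sign computation $\sgn(\pi)=\sgn(\varphi(\pi))$ by hand (your argument via $\cyc(\pi)=\cyc(\sigma)+i$ is correct), whereas the paper gets the sign bookkeeping for free from $\sgn(\lambda)$. Your handling of the degenerate cases $i\in\{n-1,n\}$ is also fine. Both arguments are sound; yours is marginally more informative, the paper's is shorter given the machinery already in place.
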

 
\begin{proof} 
Sum Theorem \ref{thm:excedancewithcycletype} over the conjugacy 
classes $C_{\lambda}$ with $m_1 = i$ in $\lambda$ and with
$\sgn(\lambda) = \pm 1$ (with $\sgn(\lambda)$ as defined in the proof
of Theorem \ref{thm:evenderangsexc}).
\end{proof}

Recall that  $\cyc(w)$ denotes the number of cycles of a
permutation $w$.  We are now ready to prove the Main
result of this Section. 

\begin{proof} (Of Theorem \ref{thm:main_exc})
We prove separately for the two statistics $\inv(\pi)$ and $\cyc(\pi)$. \\
$\underline{\stat(\pi)=\inv(\pi)}:$ Shin and Zeng \cite{shin-zeng-eulerian-continued-fraction}.  
showed that  
\begin{eqnarray}
\label{eqn:inv_der_shin_zeng}
\sum_{\pi \in \SD_n} q^{\inv(\pi)}t^{\exc(\pi)}=\sum_{i=0}^{\floor{n/2}}
b_{n,i}(q)t^{i}(1+t)^{n-2i}.
\end{eqnarray} 
where $b_{n,i}(q)=\sum_{\pi \in \SD_n(i)}q^{\inv(\pi)}$. Here,
$\SD_n(i)$ consists of all elements of $\SD_n$ with exactly $i$ excedances and no double excedances. 
Consider both terms on either side of the equaliy as polynomials in $\mathbb{R}[t][q]$. 
The coefficient of $q^{2r}$ and $q^{2r+1}$ for each $r \geq 0$ on 
either side are the same. 
Hence, \eqref{eqn:inv_der_shin_zeng}
factors nicely for $\SD_n^+$ and  $\SD_n^-$. 

\noindent
$\underline{\stat(\pi)= \cyc(\pi)}:$ Shin and Zeng in \cite{shin-zeng-eulerian-continued-fraction}  also showed
\begin{eqnarray} \label{eqn:cyclenumberexcedance}
\sum_{\pi \in \SD_n} q^{\cyc(\pi)}t^{\exc(\pi)}=\sum_{i=0}^{\floor{n/2}}
f_{n,i}(q)t^{i}(1+t)^{n-2i}  
\end{eqnarray} 
where $f_{n,i}(q)=\sum_{\pi \in \SD_n(i)}q^{c(\pi)}$. 
Again comparing the coefficients of $q^{2r}$ and 
$q^{2r+1}$ in both sides of \eqref{eqn:cyclenumberexcedance} 
completes the proof.  
\end{proof}

\bibliographystyle{acm}
\bibliography{main}
\end{document}